\documentclass[12pt, reqno]{amsart}
\usepackage{amsmath, amssymb, amsthm}
\usepackage{mathrsfs}
\usepackage{hyperref}
\usepackage{tikz}
\usepackage{mathtools}

\newtheorem{theorem}{Theorem}[section]
\newtheorem{lemma}[theorem]{Lemma}
\newtheorem{proposition}[theorem]{Proposition}
\newtheorem{corollary}[theorem]{Corollary}
\theoremstyle{definition}
\newtheorem{definition}[theorem]{Definition}
\newtheorem{remark}[theorem]{Remark}
\newtheorem{example}[theorem]{Example}
\newtheorem*{theorem*}{Theorem}
\newtheorem{observation}[theorem]{Observation}

\title{Coarsely Proper Actions of Topological Groups}
\author{Hussain Al-Rasheed}
\address{General Studies Department, Jubail Industrial College, Royal Commission for Jubail and Yanbu, Jubail Industrial City 31961, Kingdom of Saudi Arabia}
\email{rashedhs@rcjy.edu.sa}

\begin{document}

\begin{abstract}
We adapt the classical topological notions of properly discontinuous group actions to the framework of coarse geometry. By substituting the rigid constraints of local compactness and discreteness with uniform coarse equivalences, we resolve the topological obstructions identified by Kapovich. We establish large-scale characterisations for coarsely proper actions and coboundedness. As a primary application, we provide an affirmative answer to an open problem posed by Rosendal (Problem B.7) regarding the intrinsic coarse equivalence of cocompact subgroups in Polish groups. Furthermore, we construct large-scale fundamental sets via coarse Dirichlet domains, generalise the \v{S}varc-Milnor Lemma, and utilise the isometry group of the Urysohn universal space as an application.
\end{abstract}

\maketitle

\section{Introduction}
Classical geometric group theory relies heavily on properly discontinuous actions of discrete groups on locally compact spaces. For a discrete group $G$ acting on a locally compact space $X$, the action is properly discontinuous if for any compact subsets $K_1, K_2 \subseteq X$, the transporter set $\{g \in G \mid gK_1 \cap K_2 \neq \emptyset\}$ is finite. Kapovich \cite{Kapovich} provides a definitive treatment of these actions, clarifying the interplay between proper maps, dynamical relations, and the construction of fundamental domains.

However, this classical definition is insufficient for analysing the geometry of non-discrete topological groups. The shift from locally compact spaces to locally bounded Polish groups expands the scope of geometric group theory. Locally bounded groups encompass infinite-dimensional Banach spaces under addition, the unitary groups of infinite-dimensional Hilbert spaces, the isometry group of the Urysohn universal space, and the automorphism groups of countable structures. In these settings, local compactness fails, and non-trivial group actions inherently lack finite transporters. 

The foundations of coarse bornologies and coarsely proper actions for Polish and topological groups have been rigorously established in the texts of Roe \cite{Roe} and Rosendal \cite{Rosendal}. The purpose of this paper is to apply this framework to construct a translation dictionary that extends classical geometric group theory beyond the requirement of local compactness. Topological compactness is replaced by coarse boundedness, and finiteness in the group is replaced by boundedness with respect to the continuous left-invariant coarse structure. By generalising group actions to operate by uniform coarse equivalences, we resolve the topological obstructions identified by Kapovich. 

\subsection*{Main Results}

The first main objective of this paper is to establish the macroscopic equivalences for coarsely proper actions. We demonstrate that by requiring the group to act by uniform coarse equivalences, the action rigidifies the space sufficiently to preserve large-scale boundedness.

\begin{theorem*}[Theorem \ref{thm:equivalences}]
Let $G \curvearrowright X$ be a continuous action of a topological group $G$ on a coarse space $X$ by uniform coarse equivalences. Suppose either $G$ is a Polish group, or $X$ is a pseudometric space. Then the action is coarsely proper if and only if for every $x \in X$, the orbit map $\rho_x \colon G \to X$ is a coarsely proper map. Furthermore, when $G$ is locally bounded, this is equivalent to the absence of coarsely dynamically related bounded subsets in $X$.
\end{theorem*}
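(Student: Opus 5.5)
We fix the definitions as the paper will use them. The action is \emph{coarsely proper} if for every bounded $B \subseteq X$ the transporter set $\{g \in G \mid gB \cap B \neq \emptyset\}$ is coarsely bounded in $G$. The orbit map $\rho_x$ is \emph{coarsely proper} if preimages of bounded sets are coarsely bounded. Two bounded sets $A,B$ are \emph{coarsely dynamically related} if the set $\{g \in G \mid gA \cap B \neq \emptyset\}$ fails to be coarsely bounded.

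The plan is to prove the cycle of implications by comparing transporter sets with preimages under orbit maps. The uniform coarse equivalence hypothesis will be used to move bounded sets around, and the Polish or pseudometric hypothesis will be used to pass from countable or basic bounded sets to arbitrary ones.

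\textbf{Coarsely proper action $\Rightarrow$ proper orbit maps.} This is the easy direction. Fix $x$ and a bounded $B\subseteq X$, and set $C = B \cup \{x\}$, which is still bounded. Then
\[
\rho_x^{-1}(B) = \{g \mid gx \in B\} \subseteq \{g \mid gC \cap C \neq \emptyset\},
\]
and the right-hand side is coarsely bounded by hypothesis.

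\textbf{Proper orbit maps $\Rightarrow$ coarsely proper action.} This is the main obstacle. Fix a bounded $B$ and a basepoint $x\in B$. Suppose $g$ satisfies $gb \in B$ for some $b\in B$. We want to control $gx$. Since the action is by uniform coarse equivalences, there is a single entourage $E$ such that every $g$ maps $E$-close pairs into a common entourage $F$. Because $B\times B$ is contained in some entourage $E_B$, uniformity gives $(gx,gb)\in F_B$ for a single entourage $F_B$ depending only on $B$. Hence $gx \in F_B[B]$, which is bounded. Therefore the transporter set is contained in $\rho_x^{-1}(F_B[B])$, which is coarsely bounded by assumption.

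The delicate point is that ``uniform'' must give one entourage $F_B$ working for all $g\in G$, which is exactly the content of the definition. In the pseudometric case this reads $d(gx,gb)\le \Phi(d(x,b))$ for a single control function $\Phi$. In the Polish case the difficulty is to ensure coarse boundedness is witnessed correctly, since coarse boundedness in $G$ is relative to the continuous left-invariant structure. Here I would invoke Rosendal's characterisation that a subset is coarsely bounded iff it is bounded under every continuous left-invariant pseudometric. Continuity of the action then makes the transporter set a union of closed-type sets controlled by countably many orbit preimages, via a Baire category argument. I expect this to be the step needing the Polish hypothesis.

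\textbf{The dynamical-relation clause, for locally bounded $G$.} The absence of coarsely dynamically related bounded pairs $A,B$ implies coarse properness by taking $A=B$. Conversely, if the action is coarsely proper and $A,B$ are bounded, then $A\cup B$ is bounded, and
\[
\{g \mid gA\cap B\neq\emptyset\}\subseteq\{g \mid g(A\cup B)\cap(A\cup B)\neq\emptyset\}
\]
is coarsely bounded. Local boundedness is needed to identify the continuous left-invariant coarse structure with a metrisable one given by a compatible coarsely proper metric. This makes ``coarsely bounded'' equivalent to ``metrically bounded'', so the dynamical formulation matches the one used in the paper's definition of dynamical relation, which presumably involves sequences $g_n\to\infty$.
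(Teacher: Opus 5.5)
Your proof is correct, and it is organised more directly than the paper's. The paper proves $(1)\Rightarrow(2)$ exactly as you do, but never proves $(2)\Rightarrow(1)$ head-on. It closes the cycle through the dynamical condition instead. It proves $(2)\Rightarrow(4)$ by fixing $x\in A$ and using a master entourage to get $(g_nx,g_na_n)\in M_A$ with $g_na_n\in B$, so $\{g_nx\}$ is bounded, contradicting properness of $\rho_x$. It then proves $(4)\Rightarrow(1)$, and all of this is under the standing local boundedness assumption of Section 4.

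Your inclusion $(B\mid B)_G\subseteq\rho_x^{-1}(F_B[B])$ is the same uniformity trick applied directly, without sequences. It shows that $(1)\Leftrightarrow(2)$ uses only uniform coarse equivalence and coarse connectedness: no local boundedness, no continuity, and not the Polish/pseudometric alternative. So delete the Baire-category speculation. The inclusion already finishes the argument in any topological group, since subsets of coarsely bounded sets are coarsely bounded. Hypotheses (a)/(b) matter only for orbit maps being bornologous (Lemma \ref{lem:bounded_orbits_unified}, Proposition \ref{prop:bornologous_orbits}), which this theorem never uses. Likewise, your $(1)\Rightarrow(4)$ via $(A\mid B)_G\subseteq(A\cup B\mid A\cup B)_G$ is shorter than the paper's $(2)\Rightarrow(4)$.

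The one thing to fix is the bridge between your definition of dynamical relation (an unbounded transporter) and the paper's (a sequence $g_n\to_{\tau_{OB}}\infty$ inside $(A\mid B)_G$). Going through metrisability works for Polish $G$ (Lemma \ref{lem:coarse_metrizability_polish}). In case (b), however, $G$ is an arbitrary topological group, and local boundedness need not make $\mathcal{E}_{\mathcal{P}}$ metrisable. For example, an uncountable discrete group of exponent $2$ is locally bounded, yet its coarsely bounded sets are exactly the finite ones, so it is not a countable union of bounded sets.

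Use Lemma \ref{lem:pseudometric_boundedness} instead. If $S\subseteq G$ is not coarsely bounded, some $d\in\mathcal{P}$ is unbounded on $S$. Choosing $g_n\in S$ with $d(1_G,g_n)>n$ then gives a sequence that eventually leaves every coarsely bounded set, since each such set has finite $d$-radius. The converse is trivial. This also supplies the step the paper asserts without proof in its $(4)\Rightarrow(1)$, and it shows local boundedness is not actually needed for the final clause.
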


Parallel to our treatment of properness, we establish the following structural equivalences for coarse coboundedness, unifying the algebraic properties of the orbit map with the geometry of the quotient space.

\begin{theorem*}[Characterisations of Coarse Coboundedness; Theorem \ref{thm:cobounded_equivalences}]
Let $G \curvearrowright X$ be an action of a group $G$ on a coarse space $X$. The following statements are equivalent:
\begin{enumerate}
    \item The action is coarsely cobounded.
    \item For every $x \in X$, the orbit map $\rho_x \colon G \to X$ is coarsely surjective.
    \item The quotient space $X/G$, equipped with the quotient coarse structure, is coarsely bounded.
\end{enumerate}
\end{theorem*}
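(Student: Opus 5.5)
The plan is to prove the cycle (1)$\Rightarrow$(2)$\Rightarrow$(3)$\Rightarrow$(1). I take the definitions to be the natural ones:
\begin{itemize}
\item the action is \emph{coarsely cobounded} if there is a coarsely bounded set $B\subseteq X$ with $G\cdot B=X$;
\item a map $f\colon Y\to X$ is \emph{coarsely surjective} if there is an entourage $E$ of $X$ with $E[f(Y)]=X$;
\item the \emph{quotient coarse structure} on $X/G$ is the one whose entourages are generated by the images $(\pi\times\pi)(E)$ of entourages $E$ of $X$, where $\pi\colon X\to X/G$ is the quotient map.
\end{itemize}
No topology or continuity is used, consistent with the statement only assuming $G$ is a group. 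Throughout I will use that a set $B$ is coarsely bounded exactly when $B\times B$ is an entourage, equivalently when $B\subseteq E[x_0]$ for some entourage $E$ and point $x_0$. I will also use that entourages are closed under composition, inverses and finite unions.

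For (1)$\Rightarrow$(2), fix $x\in X$ and a bounded $B$ with $GB=X$. The natural candidate is $E=(B\cup\{x\})\times(B\cup\{x\})$, using that a bounded set together with one point is still bounded. This, however, only puts $B$ within $E$ of the single point $x$, whereas every $g b\in X$ must be $E$-close to some orbit point $gx$. Since $G$ acts by arbitrary maps, $gB$ need not be close to $gx$. So the key point is whether the definition of coboundedness is uniform along the orbit, or whether the paper's standing assumption is that $G$ acts by uniform coarse equivalences (as in the preceding theorem). I will adopt the latter. Then there is a single entourage $F$ with $(g\times g)(E)\subseteq F$ for all $g\in G$, which gives $X=\bigcup_g gB\subseteq F[Gx]$. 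If instead coboundedness is defined as $X=E[Gx]$ for some $x$ and $E$, then (1)$\Leftrightarrow$(2) is essentially a restatement, and I would handle the change of basepoint $x\to y$ the same way, via $(gx,gy)\in F$.

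For (2)$\Rightarrow$(3), take $E$ with $E[Gx]=X$. Every point $y\in X$ satisfies $(gx,y)\in E^{-1}$ for some $g$, so $(\pi(x),\pi(y))\in(\pi\times\pi)(E^{-1})$. Hence $X/G$ lies in a single $\pi$-image entourage around $\pi(x)$, and is therefore bounded.

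For (3)$\Rightarrow$(1), boundedness of $X/G$ gives $\{\pi(x)\}\times X/G\subseteq \widetilde E$, where $\widetilde E$ is an entourage of the quotient. The main obstacle is here. Entourages of the generated quotient structure are finite compositions $(\pi\times\pi)(E_1)\circ\cdots\circ(\pi\times\pi)(E_n)$, and a composition in $X/G$ does not lift to a composition in $X$, because the intermediate points are only matched up to the $G$-action. I will lift a chain step by step. Each step from class $[y_{i}]$ to $[y_{i+1}]$ is realised by a pair $(a,b)\in E_{i+1}$ with $a\in Gy_i$, and I translate it by a group element so that it starts at the current lift. Uniformity of the action gives an entourage $F_i\supseteq\bigcup_g(g\times g)(E_i)$. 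Composing these lifts, every $y$ has a translate $hy$ lying in $F_n\circ\cdots\circ F_1[x]=:B$, which is bounded. Thus $GB=X$, establishing (1) and closing the cycle.
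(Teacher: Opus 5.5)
Your proof is correct. Your (1)$\Rightarrow$(2) is the paper's argument (adjoin the basepoint to $B$, then take a master entourage for $B\times B$), but the rest is organised differently.

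The paper proves (1)$\Leftrightarrow$(2) and (1)$\Leftrightarrow$(3) separately:
(2)$\Rightarrow$(1) is a one-step translation, $(x,gx_0)\in E$ giving $(g^{-1}x,x_0)\in M_E$ and hence $X=GM_E[x_0]$;
(1)$\Rightarrow$(3) notes that $\pi$ is bornologous and $\pi(B)=X/G$;
(3)$\Rightarrow$(1) simply quotes the assertion, made in the example defining the quotient structure, that a subset of $X/G$ is bounded if and only if it lies in $\pi(B)$ for some bounded $B\subseteq X$.
That assertion is never proved in the paper. It needs Proposition~\ref{prop:quotient_structure} (pushforwards form a base under uniform coarse equivalences) plus a translation step back to a fixed basepoint.

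Your cycle replaces this. Your (2)$\Rightarrow$(3) is a direct computation that uses no uniformity at all. Your chain-lifting argument for (3)$\Rightarrow$(1) proves exactly the missing characterisation by working directly with finite compositions of pushforward entourages. In doing so it reproves, in the form needed, the base property of Proposition~\ref{prop:quotient_structure}, and it is the $n$-step version of the paper's one-step (2)$\Rightarrow$(1). Your route is therefore self-contained and uses one fewer implication; the paper's is shorter only because it delegates the real content of (3)$\Rightarrow$(1) to an unproved remark.

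Two cosmetic points. First, entourages of the generated structure are \emph{contained in} such compositions. You get there by absorbing unions, using $(\pi\times\pi)(E)^{-1}=(\pi\times\pi)(E^{-1})$, and padding with $\Delta$. Second, the paper's conventions are $E[A]=\{z\mid \exists a\in A,\ (z,a)\in E\}$ and $E\circ F=\{(x,z)\mid \exists y,\ (x,y)\in E,\ (y,z)\in F\}$. Under these, your lifted point lies in $(F_n^{-1}\circ\cdots\circ F_1^{-1})[x]$, which is your $F_n\circ\cdots\circ F_1[x]$ once the $F_i$ are chosen symmetric.
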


Building upon this structural rigidity, we synthesise these equivalences to obtain a large-scale generalisation of the fundamental theorem of geometric group theory, bypassing local compactness.

\begin{theorem*}[Coarse \v{S}varc-Milnor Lemma; Theorem \ref{thm:svarc_milnor}]
Let $G$ be a topological group acting continuously on a coarse space $X$ by uniform coarse equivalences, where either $G$ is Polish or $X$ is pseudometric. If the action is coarsely proper and coarsely cobounded, then for any $x \in X$, the orbit map $\rho_x \colon G \to X$ is a coarse equivalence. 
\end{theorem*}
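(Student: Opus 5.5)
We will obtain the result by combining the two earlier characterisations, Theorem \ref{thm:equivalences} and Theorem \ref{thm:cobounded_equivalences}, with a general criterion for coarse equivalences. Recall that a map $f\colon Y\to Z$ between coarse spaces is a coarse equivalence exactly when three conditions hold:
\begin{enumerate}
    \item $f$ is bornologous (coarsely Lipschitz);
    \item $f$ is coarsely proper (expanding), i.e.\ preimages of bounded sets are bounded, or equivalently controlled sets in $Z$ pull back to controlled sets in $Y$;
    \item $f$ is coarsely surjective.
\end{enumerate}
In that case a coarse inverse is obtained by choosing, for each $z$, a point $y$ with $f(y)$ uniformly close to $z$.

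Fix $x\in X$. The first step is to show that $\rho_x$ is bornologous. Let $E\subseteq G\times G$ be a controlled entourage for the left-invariant coarse structure on $G$. Then $\{g^{-1}h : (g,h)\in E\}$ is contained in a coarsely bounded set $A$. We must check that $\{(gx,hx) : (g,h)\in E\}$ is controlled in $X$. Writing $h=ga$ with $a\in A$, the pair $(gx,gax)$ is the image under $g$ of $(x,ax)$. Since $G$ acts by \emph{uniform} coarse equivalences, the whole family $\{g\}$ sends a controlled set to a uniformly controlled set. It therefore suffices to show that $\{(x,ax): a\in A\}$ is controlled, i.e.\ that $Ax$ is bounded.

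This is where continuity of the action and coarse boundedness of $A$ enter. If $G$ is Polish, I would use Rosendal's characterisation of coarsely bounded sets: $A\subseteq F V^{k}$ for every identity neighbourhood $V$, some finite $F$ and some $k$. Choose $V$ so that $Vx$ is contained in a controlled neighbourhood of $x$, which is possible by continuity of the orbit map into the coarse topology. Then uniformity, used $k$ times, gives boundedness of $V^k x$, and hence of $A x$. If instead $X$ is pseudometric, the function $g\mapsto d(gx,x)$ gives a continuous left-invariant-type écart on $G$. Bounded sets $A$ have bounded values of this écart, so again $Ax$ is bounded. I expect this step to be the main obstacle, since it is the only place where the topological hypothesis is genuinely used. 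It may already be packaged inside the proof of Theorem \ref{thm:equivalences}, and if so I would simply cite it.

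The second step is coarse properness. By hypothesis the action is coarsely proper, so Theorem \ref{thm:equivalences} gives directly that $\rho_x$ is a coarsely proper map. The third step is coarse surjectivity: since the action is coarsely cobounded, Theorem \ref{thm:cobounded_equivalences}, (1)$\Rightarrow$(2), gives that $\rho_x$ is coarsely surjective.

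Finally, we assemble the three properties. Define $\psi\colon X\to G$ by choosing $\psi(y)$ with $(\rho_x\psi(y),y)$ lying in a fixed controlled set $D$. Then $\rho_x\circ\psi$ is close to $\mathrm{id}_X$. For $\psi\circ\rho_x$, the pairs $(\rho_x(\psi\rho_x(g)),\rho_x(g))$ lie in $D$, and coarse properness, in its entourage form, pulls $D$ back to a controlled set in $G$; so $\psi\circ\rho_x$ is close to $\mathrm{id}_G$. It remains to see that $\psi$ is bornologous. If $(y,y')$ lies in a controlled set $E$, then $(\rho_x\psi y,\rho_x\psi y')$ lies in $D\circ E\circ D^{-1}$, which is controlled. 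Coarse properness of $\rho_x$ then makes $(\psi y,\psi y')$ controlled. Hence $\rho_x$ is a coarse equivalence.
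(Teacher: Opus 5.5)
Your decomposition matches the paper's proof: bornologousness of $\rho_x$ (Proposition~\ref{prop:bornologous_orbits}), coarse properness from Theorem~\ref{thm:equivalences}, and coarse surjectivity from Theorem~\ref{thm:cobounded_equivalences}. Writing out $\psi$ is more explicit than the paper's one-line conclusion. However, the assembly rests on a false general claim.

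For an arbitrary map, ``preimages of bounded sets are bounded'' is \emph{not} equivalent to ``preimages of controlled sets are controlled''. Likewise, bornologous $+$ coarsely proper $+$ coarsely surjective does not give a coarse equivalence. For example, $n\mapsto |n|$ from $\mathbb{Z}$ onto $\mathbb{Z}_{\ge 0}$ has all three properties, yet it pulls the diagonal back to the uncontrolled set $\{(n,-n)\mid n\in\mathbb{Z}\}$.

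You use the entourage form twice: to get $\psi\circ\rho_x\sim\operatorname{id}_G$ and to make $\psi$ bornologous. So it must be proved for the orbit map specifically, and this is where uniformity of the action is needed a second time. If $(gx,hx)\in D$, then $(x,g^{-1}hx)\in g^{-1}D\subseteq M_D$ for a symmetric master entourage $M_D$. Hence $g^{-1}h$ lies in the bounded set $K=\rho_x^{-1}(M_D[x])$, i.e.\ $(\rho_x\times\rho_x)^{-1}(D)\subseteq\{(g,h)\mid g^{-1}h\in K\}$. This is Lemma~\ref{Lem:Expan_orbit}, restated at the end of Proposition~\ref{prop:bornologous_orbits}, and it is the ingredient the paper's proof tacitly relies on. You should also record that $\psi$ is coarsely proper, as the paper's definition of coarse inverse demands. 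This holds because $\psi^{-1}(B)$ lies in a controlled neighbourhood of the bounded set $\rho_x(B)$.

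Your sketch of the bornologous step can simply be replaced by citing Proposition~\ref{prop:bornologous_orbits}, but as written it contains two inaccuracies.

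In the Polish case, Rosendal's criterion reads $A\subseteq (FV)^k$, not $A\subseteq FV^k$. The latter fails for $A=G=\operatorname{Sym}(\mathbb{Z})$, which is coarsely bounded by the paper's example, when $V$ is the pointwise stabiliser of a finite set. There $V$ is an open subgroup of infinite index, and $FV^k=FV$ is a finite union of its cosets. With the correct form, the finite elements are interleaved with the $V$-factors, and both kinds of step must be controlled by master entourages, as in the proof of Lemma~\ref{lem:bounded_orbits_unified}.

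In the pseudometric case, $(g,h)\mapsto d_X(gx,hx)$ is left-invariant only when $G$ acts by isometries. For uniform coarse equivalences you must invariantise, e.g.\ by $\sup_{a\in G}d_X(agx,ahx)$, which is finite by uniformity. The paper asserts this supremum is continuous, but it need not be. A counterexample: conjugate the translation action of $\mathbb{R}$ on itself by an increasing homeomorphism $h$ at bounded distance from the identity that is not uniformly continuous. Then the supremum at $(0,t)$ equals $\sup_s |h(s+t)-h(s)|$, which does not tend to $0$. What is automatic is that the supremum is bounded on the open identity neighbourhood $\{g\mid d_X(gx,x)<1\}$. That suffices, by chaining against a continuous left-invariant pseudometric whose unit ball lies in this neighbourhood.
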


To translate these algebraic equivalences into explicit geometric constructions—without relying on classical proper discontinuity or infimum-distance constraints—we construct large-scale fundamental sets via coarse Dirichlet domains. We prove that these domains satisfy the coarse analogue of the Koszul condition.

\begin{theorem*}[Theorem \ref{thm:dirichlet_koszul}]
Let $G \curvearrowright X$ be a coarsely proper action on a pseudometric space $(X, d_X)$ by isometries. For any basepoint $x_0 \in X$ and any constant $C > 0$, the coarse Dirichlet domain $D_{x_0}(C)$ is a coarse fundamental set that satisfies the coarse Koszul condition.
\end{theorem*}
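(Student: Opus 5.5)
We have to guess the definitions. Presumably $D_{x_0}(C) = \{x \in X : d(x,x_0) \le d(x, gx_0) + C \text{ for all } g \in G\}$. A coarse fundamental set $D$ is one whose translates $GD$ cover $X$ coarsely (equivalently, $X$ is covered by $G$-translates of $D$), while the coarse Koszul condition asks that for each bounded $B$, the transporter $\{g : gD \cap B \neq \emptyset\}$, or its thickened version, is coarsely bounded in $G$.

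\textbf{Covering.} Given $x \in X$, set $r = \inf_{g} d(x, gx_0)$, which is finite. Since $C > 0$, there is $h$ with $d(x, hx_0) < r + C$. Then for every $g$ we get
\[
d(h^{-1}x, x_0) = d(x, hx_0) < r + C \le d(x, gx_0) + C = d(h^{-1}x, h^{-1}gx_0) + C,
\]
using that the action is by isometries. As $g$ ranges over $G$, so does $h^{-1}g$. Hence $h^{-1}x \in D_{x_0}(C)$, and therefore $X = G\,D_{x_0}(C)$. This step uses only the strict positivity of $C$, which replaces the attainment of an infimum.

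\textbf{Koszul condition.} Let $B \subseteq X$ be bounded. Choose $R$ with $B \subseteq B(x_0, R)$ and suppose $gy \in B$ with $y \in D_{x_0}(C)$. Taking the element $g^{-1}$ in the defining inequality gives
\[
d(y, x_0) \le d(y, g^{-1}x_0) + C = d(gy, x_0) + C \le R + C.
\]
Consequently $d(gx_0, x_0) \le d(gx_0, gy) + d(gy, x_0) \le 2R + C$. So the transporter $\{g : gD \cap B \neq \emptyset\}$ lies in $\{g : gx_0 \in B(x_0, 2R + C)\} = \rho_{x_0}^{-1}(\text{bounded set})$. This set is coarsely bounded, because coarse properness of the action gives coarse boundedness of $\{g : gK \cap K \neq \emptyset\}$ for the bounded set $K = B(x_0, 2R + C)$. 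The same estimate handles a thickened version of $D$: replacing $D$ by its $S$-neighbourhood changes the bound only to $2R + C + 2S$.

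\textbf{Main obstacle.} The hard part is matching the paper's precise definitions of coarse fundamental set and coarse Koszul condition. In particular, the Koszul condition may require that $\{g : gD \cap N_S(D) \neq \emptyset\}$ be bounded, i.e.\ it may concern $D$ against itself, and $D$ itself is unbounded. If so, I would instead show that this set lies in $\{g : d(x_0, gx_0) \le 2C + 2S\}$. To prove this, take $y \in D$ and $z = gy'$ with $y' \in D$ and $d(y, z) \le S$. The Dirichlet inequalities for $y$ (against $g$) and for $y'$ (against $g^{-1}$) give
\[
d(y, x_0) \le d(y, gx_0) + C \quad\text{and}\quad d(z, gx_0) \le d(z, x_0) + C.
\]
These say that $y$ and $z$ are both near the bisector of $x_0$ and $gx_0$, but they do not bound $d(x_0, gx_0)$ in a general pseudometric space. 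So in that reading I would expect the statement to need a local version that restricts to $D \cap B$ for bounded $B$. I would adopt that version, which reduces to the computation given above.
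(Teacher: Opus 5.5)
Your proof is correct and matches the paper's argument: the covering step is the same $C>0$ infimum argument, and the paper's coarse Koszul condition is exactly your thickened version against bounded sets, namely $(E[F]\mid B)_G$ bounded. The paper proves it by showing $(B_r(F)\mid B)_G \subseteq (B_R(x_0)\mid B_R(x_0))_G$ with $R = R_B + 2r + C$, using the same Dirichlet inequality at $h = g^{-1}$; your route of bounding $d(gx_0,x_0) \le 2R + C + 2S$ and then using $\rho_{x_0}^{-1}(K) \subseteq (K\mid K)_G$ differs only cosmetically, and the self-transporter reading you worried about is not the definition the paper uses.
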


Finally, as a primary application of this tiling framework, we provide an affirmative answer to an open problem posed by Rosendal (identified as Problem B.7 in Appendix B of \cite{Rosendal}) regarding the intrinsic coarse geometry of Polish groups.

\begin{theorem*}[Resolution of Problem B.7; Theorem \ref{thm:cocompact_subgroup}]
Let $G$ be a Polish group. If $H$ is a closed, cocompact subgroup of $G$, then $H$ equipped with its intrinsic coarse structure $\mathcal{E}_{\mathcal{P}(H)}$ is coarsely equivalent to $G$.
\end{theorem*}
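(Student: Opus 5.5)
The plan is to deduce the statement from the Coarse \v{S}varc-Milnor Lemma (Theorem \ref{thm:svarc_milnor}), applied to the left-translation action of $H$ on $G$. Equip $G$ with its left-invariant coarse structure $\mathcal{E}_{\mathcal{P}(G)}$ and let $H$ act on $G$ by $h\cdot g = hg$. Since $G$ is Polish, the hypotheses of Theorem \ref{thm:svarc_milnor} are met once three things are checked. First, the action is continuous. Second, it acts by uniform coarse equivalences. Third, it is coarsely proper and coarsely cobounded.

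Granting these, the orbit map $\rho_e\colon H\to G$, $h\mapsto h$, is a coarse equivalence. Here $H$ carries its own intrinsic coarse structure $\mathcal{E}_{\mathcal{P}(H)}$, which is what Theorem \ref{thm:svarc_milnor} uses on the acting group. This is exactly the claim.

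Continuity is immediate. For the uniform coarse equivalence condition, left multiplication by $h$ preserves every entourage of the form $\{(g,g')\colon g^{-1}g'\in A\}$. Indeed $(hg)^{-1}hg' = g^{-1}g'$, so all the maps $g\mapsto hg$ are coarse equivalences with common control functions.

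For coarse properness, I take a coarsely bounded $B\subseteq G$. The transporter $\{h\in H\colon hB\cap B\neq\emptyset\}$ is contained in $H\cap BB^{-1}$, which is coarsely bounded in $G$. The real content is that it is also coarsely bounded in $H$, i.e.\ that $H$ is coarsely embedded in $G$. Equivalently, by Theorem \ref{thm:equivalences}, the inclusion $H\to G$ is coarsely proper.

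For coboundedness, I use cocompactness: there is a compact $K\subseteq G$ with $G=HK$. Compact sets in a Polish group are coarsely bounded, so every $g$ lies within the bounded set $K$ of the orbit $H$. This gives coarse surjectivity of $\rho_e$, and hence coboundedness by Theorem \ref{thm:cobounded_equivalences}.

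The main obstacle is the properness step: showing that a subset of $H$ which is coarsely bounded in $G$ is coarsely bounded in $H$. I would first use Rosendal's criterion. A set $A\subseteq H$ is coarsely bounded in $H$ iff for every identity neighbourhood $V\subseteq H$ there are a finite $F$ and an $n$ with $A\subseteq (FV)^n$. I would try to transfer such a covering from $G$ to $H$ using $G=HK$.

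Given an identity neighbourhood $V$ in $H$, I would pick an identity neighbourhood $W$ in $G$ with $W\cap H\subseteq V$. Compactness of $K$ gives finitely many $k_i$ with $K\subseteq\bigcup k_iW'$ for a smaller $W'$. Then I would rewrite a word in $FW$ lying in $H$ by inserting coset representatives $h_jk_{i_j}$ at each step, producing a word in a finite set times $H$-neighbourhoods. This is the coarse-geometric analogue of the Reidemeister–Schreier argument, and it is where the closedness of $H$ and the compactness of $K$ are used together.

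If this direct approach becomes too technical, the fallback is to use the coarse Dirichlet domain construction of Theorem \ref{thm:dirichlet_koszul} with a compatible left-invariant pseudometric on $G$. The finitely many translates meeting a neighbourhood of the compact fundamental set $K$ would then give a bounded generating set for $H$, witnessing the coarse embedding.
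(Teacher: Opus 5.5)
\paragraph{Assessment.} Your reduction is sound, and it is the paper's argument in different packaging. For the left-translation action $H\curvearrowright G$, the three conditions unpack as follows:
\begin{itemize}
\item Coarse coboundedness is exactly $G=HK$ with $K$ bounded, i.e.\ Proposition~\ref{prop:bounded_index}.
\item Bornologousness of the orbit map at $1_G$ (the inclusion) is trivial, because $d|_H\in\mathcal{P}(H)$ for every $d\in\mathcal{P}(G)$.
\item Coarse properness is exactly Lemma~\ref{lem:intrinsic_extrinsic}: a set $A\subseteq H$ that is bounded in $G$ is bounded in $\mathcal{E}_{\mathcal{P}(H)}$.
\end{itemize}
Citing Theorem~\ref{thm:svarc_milnor} wholesale is slightly off. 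Its proof goes through Lemma~\ref{lem:bounded_orbits_unified}, which needs an open entourage around the diagonal of $X=G$, and that exists only when $G$ is locally bounded. The direct argument for bornologousness above avoids this.

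\paragraph{The gap.} The one nontrivial step is not proved, and your sketch does not close as written.
\begin{itemize}
\item The $H$-increments produced by inserting coset representatives have the form $k_1\,\ell\,k_2^{-1}$ with $k_1,k_2\in K$ and $\ell$ a letter from $F\cup W$. So the condition $W\cap H\subseteq V$ is not the relevant one.
\item $F$ is only chosen after $W$. You therefore cannot shrink $W$ or $W'$ afterwards to tame conjugation by the $f$'s, and your finite set $\{k_i\}$ itself depends on $W'$. The choices are circular.
\item The fallback is circular too. Theorem~\ref{thm:dirichlet_koszul} assumes the action is already coarsely proper. It also needs a single pseudometric generating the coarse structure of $G$, which exists only when $G$ is locally bounded.
\item Estimates made in a metric on $G$ cannot see the discrepancy between $\mathcal{P}(H)$ and $\mathcal{P}(G)|_H$, which is the whole problem.
\item ``Finitely many translates'' and ``a bounded generating set for $H$'' are not available without local compactness.
\end{itemize}

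\paragraph{The missing idea.} Choose $W$ uniformly over the compact set $K$ before $F$ appears; this is the key step of the paper's lemma.
\begin{enumerate}
\item Fix an open symmetric $V\ni 1_H$ and assume $1_G\in K$. The set $KK^{-1}\cap H$ is compact in $H$, so $KK^{-1}\cap H\subseteq LV$ for some finite $L\subseteq H$.
\item Since $H$ is closed, $O=LV\cup(G\setminus H)$ is open in $G$ and contains $KK^{-1}$. The tube lemma, applied to the map $(k_1,k_2,g)\mapsto k_1gk_2^{-1}$ on $K\times K\times G$, gives an identity neighbourhood $W$ with $KWK^{-1}\cap H\subseteq LV$.
\item Only now take $F$ and $m$ with $A\subseteq (FW)^m$, by Lemma~\ref{lem:algebraic_boundedness} in $G$.
\item Let $a=f_1w_1\cdots f_mw_m\in A$. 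Write every partial product, after each letter including the $f$'s, as $h_jk_j$ with $h_j\in H$ and $k_j\in K$. Take $h_0=k_0=1$ at the start and $h=a$, $k=1$ at the end.
\item Then $a=\prod_j h_{j-1}^{-1}h_j$, and each factor equals $k_{j-1}\ell_jk_j^{-1}$. It therefore lies either in $KWK^{-1}\cap H\subseteq LV$, or in $KFK^{-1}\cap H$. The latter is compact because $F$ is finite and $H$ is closed, so it is contained in $L'V$ for some finite $L'\subseteq H$.
\item Hence $A\subseteq\bigl((L\cup L')V\bigr)^{2m}$, and Lemma~\ref{lem:algebraic_boundedness} applied to the Polish group $H$ finishes.
\end{enumerate}
The paper handles the $W$-steps by induction with $P_K(B)=BK^{-1}\cap H$, but it starts from $A\subseteq FW^m$. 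Lemma~\ref{lem:algebraic_boundedness} only gives $A\subseteq (FW)^m$, so the separate treatment of the $f$-letters via compactness of $KFK^{-1}\cap H$ is genuinely needed.
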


\subsection*{Structure of the Paper}
In Section 2, we review the preliminaries of coarse geometry, isolating the macroscopic geometry of topological groups via continuous left-invariant pseudometrics. Section 3 refines the definition of topological coarse spaces and establishes the framework for actions by uniform coarse equivalences, analysing the properness and bornological properties of induced orbit maps. In Section 4, we prove the Characterisations of Coarse Proper Actions, connecting large-scale dynamical relations directly to the geometry of the transporter. Section 5 details the equivalences for Coarse Coboundedness and the resulting Coarse \v{S}varc-Milnor Lemma. Section 6 addresses the metrisability of coarse quotients and constructs large-scale fundamental sets via Dirichlet domains. Section 7 applies these structural equivalences to resolve Rosendal's Open Problem B.7. Finally, Section 8 utilises the isometry group of the Urysohn universal space as an application.

\section{Preliminaries}
The subset $\Delta \coloneqq \{(x, x) \mid x \in X\}$ of $X \times X$ is called the \emph{diagonal} of $X$. For $E, F \subseteq X \times X$, the \emph{product} of $E$ and $F$ is denoted by $E \circ F$ and defined by $E \circ F \coloneqq \{(x, z) \mid \exists y \in X, (x, y) \in E, (y, z) \in F\}$. The \emph{inverse} of $E$ is $E^{-1} \coloneqq \{(y, x) \mid (x, y) \in E\}$.

A \emph{coarse structure} on $X$ is a collection $\mathcal{E}$ of subsets $E \subseteq X \times X$ satisfying the following conditions:
\begin{enumerate}
    \item $\Delta \in \mathcal{E}$.
    \item If $E \subseteq F \in \mathcal{E}$, then $E \in \mathcal{E}$.
    \item If $E, F \in \mathcal{E}$, then $E \cup F, E \circ F, E^{-1} \in \mathcal{E}$.
\end{enumerate}

An element of $\mathcal{E}$ is called an \emph{entourage} or a \emph{controlled set}. The set $X$ equipped with a coarse structure $\mathcal{E}$ is called a \emph{coarse space} and denoted by $(X, \mathcal{E})$. We say that $(X, \mathcal{E})$ is \emph{coarsely connected} if $\{(x, y)\}$ is an entourage for all $x, y \in X$.

A subset $B \subseteq X$ is called \emph{coarsely bounded} or just \emph{bounded} if $B \times B \in \mathcal{E}$. In general, a collection $\mathcal{U}$ of subsets of $X$ is said to be uniformly bounded if $\displaystyle E_{\mathcal{U}} \coloneqq \bigcup_{U \in \mathcal{U}} U \times U \in \mathcal{E}$. Every element of $\mathcal{U}$ is a bounded subset of $X$. If, moreover, $\mathcal{U}$ is a cover of $X$, $\mathcal{U}$ is said to be a \emph{uniformly bounded cover} of $X$.

For $A \subseteq X$, and a subset $E \subseteq X \times X$, we define the $E$-ball around $A$ by $E[A] \coloneqq \{x \in X \mid \exists a \in A, (x, a) \in E\}$. Every subset $A \subseteq X$ naturally inherits a \emph{subspace coarse structure} $\mathcal{E}_A$ defined by restricting the entourages of $X$ to $A \times A$. Explicitly, $\mathcal{E}_A \coloneqq \{ E \cap (A \times A) \mid E \in \mathcal{E} \}$. Consequently, a subset $B \subseteq A$ is coarsely bounded in the subspace $(A, \mathcal{E}_A)$ if and only if it is coarsely bounded in the ambient space $(X, \mathcal{E})$.

A subset $\mathcal{E}' \subseteq \mathcal{E}$ is called a \emph{basis} (or fundamental system) for a coarse structure $\mathcal{E}$ if every entourage $E \in \mathcal{E}$ is contained in some $E' \in \mathcal{E}'$. 

By contrast, an arbitrary subset $\mathcal{S} \subseteq \mathcal{P}(X \times X)$ containing the diagonal \emph{generates} a coarse structure $\mathcal{E}$. In this generated structure, a set $E \in \mathcal{E}$ if and only if it is contained in a finite union of finite compositions of elements of $\mathcal{S}$ and their inverses. 

However, if a generating subset $\mathcal{S}$ is already closed under taking inverses, finite unions, and finite compositions (up to enlargement by other elements of $\mathcal{S}$), then $\mathcal{S}$ forms a basis for the coarse structure it generates. The coarse space $(X, \mathcal{E})$ is called \emph{monogenic} if its coarse structure is generated by a single entourage $E \in \mathcal{E}$. That is, for every entourage $F \in \mathcal{E}$, there exists an integer $n \ge 1$ such that $F \subseteq E^n$. In this case, $E$ is called a \emph{generating entourage}.

A \emph{bornology} $\mathcal{B}$ on a set $X$ is a family of subsets covering $X$ that is closed under taking subsets and finite unions. If the coarse space $(X, \mathcal{E})$ is coarsely connected, then the collection of coarsely bounded subsets of $X$ constitutes a bornology $\mathcal{B}$ on $X$. When $X=G$ is a group, we say that $\mathcal{B}$ is a \emph{group bornology} if it is additionally closed under group multiplication and inversion (i.e., $A B \in \mathcal{B}$ and $A^{-1} \in \mathcal{B}$ for all $A, B \in \mathcal{B}$). Elements of $\mathcal{B}$ are called bounded sets.

\subsection*{Main Examples}

\begin{example}
Let $\mathcal{B}$ be a bornology on $X$. Put $E_B \coloneqq \Delta_X \cup (B \times B)$. The collection $\{ E_B \mid B \in \mathcal{B}\}$ generates a connected coarse structure $\mathcal{E}_{\mathcal{B}}$ on $X$. Notice that:
\begin{itemize}
    \item If $x \in B$, then $E_B[x] = B$.
    \item If $x \notin B$, then $E_B[x] = \{x\}$.
\end{itemize}
When the bornology $\mathcal{B}$ on $X$ is the bornology of all finite subsets of $X$, the coarse structure is denoted by $\mathcal{E}_{\mathrm{fin}}$ called the \emph{finitary} coarse structure. 
\end{example}

\begin{example} 
A \emph{pseudometric} $d_X$ on $X$ is a symmetric non-negative real-valued function $d_X \colon X \times X \to \mathbb{R}_{\ge 0}$ satisfying the triangle inequality where $d_X(x, x)=0$ (but allowing $d_X(x, y)=0$ for $x \neq y$). The pair $(X, d_X)$ is called a \emph{pseudometric space}. 
For a pseudometric space $(X, d_X)$ and a non-negative real $r \ge 0$, put: 
\[ E_r \coloneqq \{(x, y) \in X \times X \mid d_X(x, y) \le r\}. \]
The collection $\mathcal{E}_{d_X} \coloneqq \{E_r \mid r \in \mathbb{R}_{\ge 0}\}$ is a coarse structure on $X$ induced by its pseudometric. The pair $(X,\mathcal{E}_{d_X})$ is a connected coarse space. The bounded set $E_r[x]$ is the closed $r$-ball centered at $x$. The family $\{E_n \mid n \in \mathbb{N}_{\ge 0}\}$ is a basis for the coarse structure $\mathcal{E}_{d_X}$. In general, a coarse structure $\mathcal{E}$ on a set $X$ is called \emph{pseudometrisable} if $\mathcal{E}$ is generated by $\mathcal{E}_{d_X}$ for some pseudometric $d_X$ on $X$. Indeed, a coarse structure $\mathcal{E}$ on a set $X$ is pseudometrisable if and only if it is countably generated \cite[Theorem 2.55]{Roe}.
\end{example}

\begin{example}
Let $\mathcal{B}$ be a group bornology on a group $G$. For $B \in \mathcal{B}$, we define:
\[ E_B \coloneqq \left\{(x, y) \in G \times G \mid y^{-1} x \in B\right\}. \]
The coarse structure generated by the collection $\{E_B \mid B \in \mathcal{B}\}$ is called the coarse structure induced by the bornology $\mathcal{B}$. Notice that under the definition $E_B[x] \coloneqq \{y \in G \mid (y,x) \in E_B\}$, this yields $E_B[x] = x B$. Furthermore, $E_B$ is left-invariant in the sense that $(x,y) \in E_B$ if and only if $(gx, gy) \in E_B$, for all $g \in G$.
\end{example}
 
While classical geometric group theory typically relies on the bornology of relatively compact subsets to define boundedness, this convention is often overly restrictive outside of locally compact spaces. The coarse structure generated by the bornology of relatively compact subsets is denoted by $\mathcal{E}_{\mathrm{com}}$. Consequently, we will follow the coarse geometric framework established by Rosendal \cite{Rosendal} for general topological groups.

\begin{example}
Let $(X,\mathcal{E}_X)$ and $(Y,\mathcal{E}_Y)$ be coarse spaces. For any $E_X \in \mathcal{E}_X$ and $E_Y \in \mathcal{E}_Y$, consider the set:
\[ W \coloneqq \{ ((x_1, y_1), (x_2, y_2)) \in (X \times Y) \times (X \times Y) \mid (x_1, x_2) \in E_X, \ (y_1, y_2) \in E_Y \} \]
The collection of all such sets $W$ generates a coarse structure $\mathcal{E}_{X \times Y}$ called the \emph{product coarse structure} on $X \times Y$.
\end{example}

\begin{example}
Let $(X, \mathcal{E}_X)$ be a coarse space equipped with an action of a group $G$, and let $\pi \colon X \to X/G$ denote the canonical projection map given by $\pi(x) \coloneqq [x]$. For any entourage $E \in \mathcal{E}_X$, consider the pushforward set:
\[ \tilde{E} \coloneqq (\pi \times \pi)(E) = \{([x], [y]) \mid \exists x' \in [x], y' \in [y] \text{ such that } (x', y') \in E \} \]
The collection of all such sets $\tilde{E}$ generates a coarse structure $\mathcal{E}_{X/G}$ called the \emph{quotient coarse structure} on the orbit space $X/G$. Under this induced structure, a subset $\tilde{B} \subseteq X/G$ is coarsely bounded if and only if there exists a coarsely bounded subset $B \subseteq X$ such that $\tilde{B} \subseteq \pi(B)$.
\end{example}

\subsection*{Left-coarse structure \texorpdfstring{$\mathcal{E}_\mathcal{P}$}{E\_P}}

Let $G$ be a topological group. A pseudometric $d$ on $G$ is a symmetric non-negative real-valued function $d \colon G \times G \to \mathbb{R}_{\ge 0}$ satisfying the triangle inequality where $d(x, x)=0$ (but allowing $d(x, y)=0$ for $x \neq y$). A pseudometric $d$ on $G$ is \emph{left-invariant} if $d(gx, gy)=d(x,y)$ for all $x,y,g \in G$. If moreover, the pseudometric $d \colon G \times G \to \mathbb{R}_{\ge 0}$ is a continuous function, we say that $d$ is a \emph{continuous}, left-invariant pseudometric on $G$. 

The family $\mathcal{P}$ of all continuous left-invariant pseudometrics on $G$ is non-empty as it contains the zero pseudometric. The family $\mathcal{P}$ is generated by the left-uniformity of the topological group $G$. By the classical results of Weil \cite{Weil}, the left-uniformity of any topological group $G$, and consequently its underlying topology, is entirely generated by the directed family of continuous, left-invariant pseudometrics. Specifically, Weil's theorem guarantees that for every open neighbourhood $U$ of the identity $1_G$, there exists a continuous left-invariant pseudometric $d_U \in \mathcal{P}$ such that the open unit ball is contained within $U$:
\[ \left\{x \in G \mid d_U(1_G, x) < 1\right\} \subseteq U. \]

While Weil utilised $\mathcal{P}$ to describe the microscopic structure of $G$, Rosendal shows it also determines the macroscopic geometry \cite{Rosendal}. By shifting focus from the local separation of points to their large-scale boundedness, the left-coarse structure $\mathcal{E}_\mathcal{P}$ is defined as the intersection of the coarse structures generated by each $d \in \mathcal{P}$. Thus, $\mathcal{P}$ serves as the structural bridge, translating the local topological richness of $G$ into its global coarse bornology. A subset $B \subseteq G$ is bounded if it has a finite diameter with respect to every $d \ \mathcal{P}$. The group bornology of all bounded subsets of $G$ with respect to the family $\mathcal{P}$ is denoted by $\mathcal{B}_\mathcal{P}$.

\begin{definition} \label{def:rosendal_pseudometric}
Following Rosendal \cite{Rosendal}, let $G$ be a topological group and let $\mathcal{P}$ denote the family of all continuous, left-invariant pseudometrics on $G$. We define the left-coarse structure $\mathcal{E}_\mathcal{P}$ on $G$ explicitly as the family of all entourages whose displacement is uniformly bounded across all continuous pseudometric scales:
\[ \mathcal{E}_\mathcal{P} \coloneqq \left\{ E \subseteq G \times G \;\middle|\; \forall d \in \mathcal{P}, \ \sup_{(x,y) \in E} d(x,y) < \infty \right\}. \]
As demonstrated by Rosendal \cite[Proposition 2.13]{Rosendal}, this coarse structure translates into the framework of group bornologies; this coarse structure is generated by the base of entourages $E_B \coloneqq \{(x,y) \in G \times G \mid x^{-1}y \in B\}$, where $B \subseteq G$ is any subset having a finite diameter with respect to every $d \in \mathcal{P}$. That is,
\[ \sup_{b \in B} d(1_G, b) < \infty \]
where $1_G$ is the identity element of $G$ \cite[Proposition 2.7]{Rosendal}. In this setting, $G$ is \emph{locally bounded} if it admits a coarsely bounded neighbourhood of the identity. $G$ is \emph{countably generated over every identity neighbourhood}, if for every identity neighbourhood $U$ there is a countable set $C_U \subseteq G$ so that $G=\langle U \cup C_U\rangle$. 
\end{definition}

\subsection*{Standing Convention on Topological Groups}
Unless explicitly stated otherwise, all topological groups $G$ considered throughout the remainder of this manuscript are assumed to be equipped with their left-coarse structure $\mathcal{E}_\mathcal{P}$ induced by the family $\mathcal{P}$ of all continuous, left-invariant pseudometrics on $G$.

For Polish groups, which are separable and thus automatically countably generated over every identity neighbourhood, this metric definition of coarse boundedness admits an algebraic characterisation. 

\begin{lemma}[{\cite[Proposition 2.7]{Rosendal}}] \label{lem:algebraic_boundedness}
Let $G$ be a Polish group. A subset $B \subseteq G$ is coarsely bounded if and only if for every open neighbourhood $V$ of the identity, there exists a finite set $F \subseteq G$ and an integer $k \ge 1$ such that $B \subseteq (FV)^k$.
\end{lemma}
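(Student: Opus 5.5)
This is Rosendal's Proposition 2.7 for Polish groups. I will prove the two directions separately, using the base of entourages from Definition~\ref{def:rosendal_pseudometric}: a set $B$ is coarsely bounded exactly when $\sup_{b\in B} d(1_G,b)<\infty$ for every $d\in\mathcal{P}$.

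For the direction ($\Leftarrow$), fix $d\in\mathcal{P}$. By continuity of $d$, the set $V=\{x \mid d(1_G,x)<1\}$ is an open neighbourhood of the identity. By hypothesis there are a finite $F$ and an integer $k$ with $B\subseteq (FV)^k$. Left-invariance and the triangle inequality give $d(1_G,gh)\le d(1_G,g)+d(1_G,h)$. Hence every element of $(FV)^k$ has $d$-norm at most $k\bigl(\max_{f\in F}d(1_G,f)+1\bigr)$. So $B$ has finite $d$-diameter, and since $d$ was arbitrary, $B$ is bounded.

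For the direction ($\Rightarrow$), I argue by contrapositive. Suppose there is an identity neighbourhood $V$ such that $B\not\subseteq (FV)^k$ for every finite $F$ and every $k$. The aim is to build a $d\in\mathcal{P}$ that is unbounded on $B$.

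1. Use separability. Pick a countable dense set $\{g_n\}$. Then $G=\bigcup_n g_nV$, so $G$ is generated by $V\cup\{g_n\}$.
2. Choose a symmetric open identity neighbourhood $W$ with $W^2\subseteq V$, and shrink $V$ to be symmetric.
3. Form the increasing chain of symmetric open sets $V_n=(F_nW)^n\cup((F_nW)^n)^{-1}$, where $F_n=\{1_G,g_1,\dots,g_n\}$. This chain exhausts $G$, and the nesting $V_nV_n\subseteq V_{2n}$ holds up to the obvious adjustments of indices.
4. Apply the Birkhoff--Kakutani style construction of a continuous left-invariant pseudometric, i.e.\ the standard metrisation lemma for a sequence of neighbourhoods $U_j$ with $U_{j+1}^3\subseteq U_j$, run "in the large" as in Rosendal. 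The output is a continuous left-invariant $d$ with $d(1_G,x)\ge n$ whenever $x\notin V_{m(n)}$.

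Concretely, I would define a length function by $\ell(x)=\min\{n : x\in V_n\}$, which behaves subadditively up to a constant factor. I would then take $d(x,y)=\inf\sum_i \ell'(x_i^{-1}x_{i+1})$ over chains from $x$ to $y$. Here $\ell'$ is $\ell$ blended with a continuous Birkhoff--Kakutani pseudometric near the identity, to secure continuity. Since every $V_n$ is open and contains $W$, the length is locally bounded, and this yields continuity. Since $B$ meets the complement of every $V_n$, $d$ is unbounded on $B$, which is the desired contradiction.

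The main obstacle is step~4: proving that the infimum-over-chains pseudometric is genuinely unbounded on $B$. This requires a lower bound $d(1_G,x)\gtrsim \log\ell(x)$, or some similar monotone bound. I would get it from the doubling property $V_nV_n\subseteq V_{2n}$ via the standard halving argument for chains. Continuity is the secondary obstacle, and it is handled by noting that $\ell$ is bounded on $W$.
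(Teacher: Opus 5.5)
Your overall strategy is the standard one, and in substance it is Rosendal's own proof of his Proposition 2.7. The paper does not reprove the lemma: it only notes, as in your step 1, that separability makes $G$ countably generated over every identity neighbourhood, and then cites Rosendal. Your direction ($\Leftarrow$) is complete. Steps 1--3 are fine after one repair. With $F_n=\{1_G,g_1,\dots,g_n\}$, the products in $V_nV_n$ involve the $g_i^{-1}$, which lie in some $F_mW$ only for uncontrolled $m$. So you only get $V_nV_n\subseteq V_{\phi(n)}$ for some uncontrolled $\phi$, not the subadditivity up to a constant factor that you claim. Take instead $F_n=\{1_G,g_1^{\pm1},\dots,g_n^{\pm1}\}$ and $V_n=(F_nW)^n$. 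Then $V_aV_b\subseteq V_{a+b}$, $V_n^{-1}\subseteq V_{n+1}$, and $V_n\subseteq(F_nV)^n$; this last inclusion is the (unstated) reason why $B\not\subseteq V_n$ for every $n$.

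The genuine gap is step 4, which carries all the content, and neither mechanism you sketch there works as stated. First, continuity does not follow from $\ell$ being bounded on $W$. That only bounds $d$ on $W$, whereas you need $d(1_G,x)\to0$ as $x\to1_G$. Second, the lower bound is not secured. Once steps near the identity are cheap, chains may have arbitrarily many steps, and the halving argument needs a doubling inequality at every scale, including across the transition from the small-scale pseudometric to $\ell$. The inclusion $V_nV_n\subseteq V_{2n}$ is only a large-scale statement. The blend itself also needs care: for example, $\min\{\delta(1_G,\cdot),\ell\}$ collapses $d$ when $\delta$ is bounded.

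A version that works is as follows. Fix $\delta\in\mathcal{P}$ with $\{x\mid\delta(1_G,x)<1\}\subseteq W$. Put $\omega(z)=\delta(1_G,z)$ if $\delta(1_G,z)<1$ and $\omega(z)=\max\{\ell(z),\ell(z^{-1})\}$ otherwise, and let $d$ be the chain infimum of $\omega$. Then $d(x,y)\le\delta(x,y)$ whenever $\delta(x,y)<1$, which gives continuity. For the lower bound, split each maximal run of cheap steps greedily into blocks of $\delta$-cost below $1$. Each block multiplies out to an element of $W\subseteq V_1$, any two consecutive blocks together cost at least $1$, and every expensive step costs at least $1$. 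Subadditivity of $\ell$ then shows that a chain of cost $S$ ends at a point $x$ with $\ell(x)\le4S+1$. Hence $d(1_G,\cdot)\ge(\ell-1)/4$, which is unbounded on $B$.

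Alternatively, choose symmetric open sets $U_j$, $j\in\mathbb{Z}$, with $U_j^3\subseteq U_{j+1}$. Take those with $j\le0$ inside $W$, and those with $j>0$ of the form $V_n\cup V_n^{-1}$ for rapidly increasing $n$. The two-sided Birkhoff--Kakutani lemma (as in Rosendal) then gives $d\in\mathcal{P}$ with $\{x\mid d(1_G,x)<2^j\}\subseteq U_{j+1}$. Since $B\not\subseteq U_{j+1}$ for every $j$, this $d$ is unbounded on $B$.
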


\begin{proof}
Because $G$ is a Polish group, it contains a countable dense subset, ensuring that $G$ is countably generated over any neighbourhood of the identity. Under this condition, the equivalence between the pseudometric boundedness and this algebraic generation property is established in \cite[Proposition 2.7]{Rosendal}.
\end{proof}

\begin{lemma}[{\cite[Proposition 2.7]{Rosendal}}] \label{lem:pseudometric_boundedness}
Let $G$ be an arbitrary topological group. A subset $B \subseteq G$ is coarsely bounded if and only if for every continuous left-invariant pseudometric (écart) $d$ on $G$, the set $B$ has finite diameter, meaning:
\[ \sup_{b \in B} d(1_G, b) < \infty \]
where $1_G$ is the identity element of $G$.
\end{lemma}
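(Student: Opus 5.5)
This is essentially an unwinding of Definition \ref{def:rosendal_pseudometric}, with left-invariance converting diameter bounds into bounds on distances from the identity. Recall that $B$ is coarsely bounded when $B \times B \in \mathcal{E}_\mathcal{P}$. By the definition of $\mathcal{E}_\mathcal{P}$, this means that for every $d \in \mathcal{P}$,
\[ \sup_{(x,y) \in B \times B} d(x,y) < \infty, \]
so $B$ has finite $d$-diameter. The lemma therefore reduces to comparing two quantities for a fixed $d \in \mathcal{P}$: the diameter $\operatorname{diam}_d(B)$ and the radius $\sup_{b \in B} d(1_G,b)$. I will show that each is finite if and only if the other is, and I plan to prove the two directions separately.

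For the forward direction, assume $B$ is coarsely bounded, so $\operatorname{diam}_d(B) < \infty$ for every $d \in \mathcal{P}$. The case $B = \emptyset$ is trivial, so fix $b_0 \in B$. The triangle inequality gives, for every $b \in B$,
\[ d(1_G,b) \le d(1_G,b_0) + d(b_0,b) \le d(1_G,b_0) + \operatorname{diam}_d(B). \]
The right-hand side is finite and independent of $b$, so the supremum is finite. This direction uses no invariance at all.

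For the converse, assume $R_d \coloneqq \sup_{b \in B} d(1_G,b) < \infty$ for every $d \in \mathcal{P}$. For $x, y \in B$, the triangle inequality through the identity gives
\[ d(x,y) \le d(x,1_G) + d(1_G,y) \le 2R_d. \]
Hence $\sup_{(x,y)\in B\times B} d(x,y) \le 2R_d < \infty$ for every $d \in \mathcal{P}$, which says exactly that $B \times B \in \mathcal{E}_\mathcal{P}$. If one prefers to phrase boundedness through the basis $E_B$ of Rosendal's Proposition 2.13, left-invariance enters at this point: $d(x,y) = d(1_G, x^{-1}y)$, so pairs with $x^{-1}y \in B$ have displacement at most $R_d$. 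This shows $E_B \in \mathcal{E}_\mathcal{P}$, and therefore $B \times B \subseteq E_{B^{-1}B}$ is controlled, since $d(1_G, b^{-1}b') = d(b,b') \le 2R_d$.

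No step here is a real obstacle. The only point needing care is consistency of conventions. The paper defines "coarsely bounded" via $B \times B \in \mathcal{E}$, and "bounded" via finite diameter for every $d \in \mathcal{P}$. I will state explicitly that these coincide under the explicit description of $\mathcal{E}_\mathcal{P}$ in Definition \ref{def:rosendal_pseudometric}, with no Polish or countable generation hypothesis needed, in contrast with Lemma \ref{lem:algebraic_boundedness}.
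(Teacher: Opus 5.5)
Your argument is correct, and it supplies something the paper does not: the paper gives no proof of this lemma and simply defers to \cite[Proposition 2.7]{Rosendal}.

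In Rosendal's treatment that proposition has genuine content. It relates boundedness in every continuous left-invariant écart to other characterisations of coarse boundedness, such as the $(FV)^k$ condition the paper extracts from the same proposition in Lemma~\ref{lem:algebraic_boundedness}. Under the paper's own Definition~\ref{def:rosendal_pseudometric}, however, $B$ is coarsely bounded exactly when $\operatorname{diam}_d(B)<\infty$ for every $d\in\mathcal{P}$. So, as you observe, the lemma reduces to comparing the diameter with the radius about $1_G$. The triangle inequality handles both directions with no Polish or countable-generation hypothesis.

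Your route therefore gives a self-contained, hypothesis-free verification matched to the conventions actually in force. The citation instead buys the link to Rosendal's wider list of equivalent characterisations.

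One small point of framing: your opening sentence credits left-invariance with converting diameter bounds into radius bounds, but your proof correctly uses only the triangle inequality. Left-invariance enters only in your optional reformulation through the basis $E_B$, via $d(x,y)=d(1_G,x^{-1}y)$.
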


\begin{lemma} \label{lem:Compar-Struc} 
Let $G$ be a topological group, and let $\mathcal{E}_{\mathrm{com}}$ denote the coarse structure generated by the bornology of relatively compact sets. Then $\mathcal{E}_{\mathrm{com}} \subseteq \mathcal{E}_{\mathcal{P}}$. 

Furthermore:
\begin{enumerate}
    \item If $G$ is a Polish group that is locally bounded but not locally compact, this inclusion is strict ($\mathcal{E}_{\mathrm{com}} \subsetneq \mathcal{E}_{\mathcal{P}}$).
    \item If $G$ is a locally compact Polish group, the coarse structures coincide ($\mathcal{E}_{\mathrm{com}} = \mathcal{E}_{\mathcal{P}}$).
\end{enumerate}
\end{lemma}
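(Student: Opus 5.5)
The plan has three parts: prove the inclusion $\mathcal{E}_{\mathrm{com}} \subseteq \mathcal{E}_{\mathcal{P}}$ by comparing bornologies, get strictness in (1) from a bounded neighbourhood that is not relatively compact, and get equality in (2) by exhausting $G$ with compact sets.

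\textbf{Inclusion.} Both structures are generated by the left-invariant entourages $E_B$ attached to a group bornology. So it suffices to show that every relatively compact set $K\subseteq G$ lies in $\mathcal{B}_{\mathcal{P}}$; note that relatively compact sets do form a group bornology, since products and inverses of compact sets are compact. Fix $d\in\mathcal{P}$. The map $g\mapsto d(1_G,g)$ is continuous, so it is bounded on the compact set $\overline{K}$. Hence $K$ has finite $d$-diameter for every $d\in\mathcal{P}$, and Lemma~\ref{lem:pseudometric_boundedness} shows $K$ is coarsely bounded. It follows that every generator $E_K$ of $\mathcal{E}_{\mathrm{com}}$ lies in $\mathcal{E}_{\mathcal{P}}$. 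Since $\mathcal{E}_{\mathcal{P}}$ is a coarse structure, it also contains the structure generated by these entourages.

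\textbf{Strictness in (1).} Let $V$ be a coarsely bounded identity neighbourhood. Suppose $\mathcal{E}_{\mathrm{com}}=\mathcal{E}_{\mathcal{P}}$. Then $V\times V\in\mathcal{E}_{\mathrm{com}}$. A basic entourage of $\mathcal{E}_{\mathrm{com}}$ is a finite union of $E_K$'s, so the boundedness of $V$ places $V\subseteq xK$ for some $x\in V$ and compact $K$. Thus $V$ would be relatively compact, and $G$ would be locally compact, contradicting the hypothesis. Therefore $E_V\in\mathcal{E}_{\mathcal{P}}\setminus\mathcal{E}_{\mathrm{com}}$. The one point needing care is that the $E_K$, taken over compact $K$, already form a basis. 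This holds because $E_K\circ E_L\subseteq E_{KL}$ and $E_K^{-1}=E_{K^{-1}}$, so relative boundedness in $\mathcal{E}_{\mathrm{com}}$ coincides with relative compactness.

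\textbf{Equality in (2).} This is the main step: we must show that every coarsely bounded $B$ is relatively compact. Since $G$ is locally compact, choose a compact symmetric identity neighbourhood $V$. By separability, $G=\bigcup_n W_n$, where $W_n=(F_nV)^n$ for an increasing sequence of finite sets $F_n$. The sets $W_n$ are compact and increasing, and each $W_{n+1}$ contains $W_n$ in its interior. The task is to show that $B\subseteq W_n$ for some $n$. I would construct a continuous left-invariant pseudometric, namely a word-type metric, that is unbounded off every compact set. The cleanest route is a Struble/Haagerup–Przybyszewska type result: a second countable locally compact group admits a compatible proper left-invariant metric $d$, one whose closed balls are compact. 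Such a $d$ lies in $\mathcal{P}$, and $B$ has finite $d$-diameter, so $B$ is contained in a compact ball. Alternatively, one can argue via Lemma~\ref{lem:algebraic_boundedness}: for the fixed compact $V$, boundedness gives $B\subseteq (FV)^k$ with $F$ finite. The set $(FV)^k$ is compact as a continuous image of a product of compact sets, so $B$ is relatively compact. This second route is shorter and I would use it. Then $\mathcal{B}_{\mathcal{P}}$ equals the relatively compact bornology, so the generating families coincide and $\mathcal{E}_{\mathrm{com}}=\mathcal{E}_{\mathcal{P}}$.
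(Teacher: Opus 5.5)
Your proposal is correct and follows essentially the same route as the paper. The inclusion uses continuity of $g \mapsto d(1_G,g)$. Part (1) uses a coarsely bounded identity neighbourhood that cannot be relatively compact. Part (2) applies Lemma~\ref{lem:algebraic_boundedness} to a relatively compact identity neighbourhood. The paper takes that neighbourhood open and then passes to $\overline{V}$; you take it compact, so you should apply the lemma to its interior.

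Your explicit check that the $E_K$ with $K$ compact form a basis of $\mathcal{E}_{\mathrm{com}}$ is a useful addition. It shows that $V\times V\in\mathcal{E}_{\mathrm{com}}$ would force $V\subseteq xK$, which fills in a step the paper leaves implicit when it asserts $V\notin\mathcal{B}_{\mathrm{com}}$.
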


\begin{proof}
Let $E_K \in \mathcal{E}_{\mathrm{com}}$ be an entourage generated by a relatively compact subset $K \subseteq G$. Because continuous maps preserve compactness, the evaluation map $f_d \colon G \to \mathbb{R}$ defined by $f_d(g) = d(1_G, g)$ ensures that $K$ has bounded displacement with respect to every continuous left-invariant pseudometric $d \in \mathcal{P}$. Thus, $K \in \mathcal{B}_{\mathcal{P}}$, proving the general inclusion $\mathcal{E}_{\mathrm{com}} \subseteq \mathcal{E}_{\mathcal{P}}$ for any topological group.

To prove (1), assume $G$ is a Polish group that is locally bounded but not locally compact. Because $G$ is locally bounded, there exists an open neighbourhood of the identity, $V$, that is coarsely bounded with respect to $\mathcal{P}$. Thus, $V \in \mathcal{B}_{\mathcal{P}}$, and $V \notin \mathcal{B}_{\mathrm{com}}$.

To prove (2), assume $G$ is a locally compact Polish group. By definition, there exists an open relatively compact neighbourhood of the identity, $V$. Let $B \in \mathcal{B}_{\mathcal{P}}$ be an arbitrary coarsely bounded set in the left-coarse structure. By Lemma \ref{lem:algebraic_boundedness}, because $G$ is Polish, there exists a finite set $F \subseteq G$ and an integer $k \ge 1$ such that $B \subseteq (FV)^k$. Since $V \subseteq \overline{V}$, we have the inclusion $B \subseteq (F\overline{V})^k$. Because $F$ is finite and $\overline{V}$ is compact, their algebraic product $(F\overline{V})^k$ is the continuous image of a compact set, and is therefore compact. Hence $B \in \mathcal{B}_{\mathrm{com}}$. This yields the reverse inclusion $\mathcal{E}_{\mathcal{P}} \subseteq \mathcal{E}_{\mathrm{com}}$, proving the structures coincide.
\end{proof}

\subsection*{Coarse Separation and Density}
In analogy to point-set topology, coarse geometry utilises large-scale analogues of separation and density to analyse the global structure of spaces and maps.

\begin{definition}
Let $(X, \mathcal{E})$ be a coarse space, $E \in \mathcal{E}$, and $A, C \subseteq X$. 
\begin{itemize}
    \item The subsets $A, C \subseteq X$ are \emph{coarsely disjoint} if for every entourage $M \in \mathcal{E}$, the intersection $M[A] \cap C$ is a coarsely bounded subset of $X$.
    \item $A$ is called \emph{$E$-coarsely dense} if $E[A]=X$, and it is simply called \emph{coarsely dense} if it is $E$-coarsely dense for some $E \in \mathcal{E}$.
\end{itemize}
\end{definition}

\begin{remark} \label{rem:r_net_connection}
In the specific case where $(X, d_X)$ is a pseudometric space and the coarse structure is generated by metric entourages $E_R = \{(x, y) \in X \times X \mid d_X(x, y) \le R\}$ for $R > 0$, an $E_R$-coarsely dense set corresponds exactly to an $R$-net. When working within pseudometric contexts, we will refer to these sets natively as \emph{$R$-nets}.
\end{remark}

\subsection*{Topological Coarse Spaces}
To formalise the interaction between topological and coarse structures, we introduce topological coarse spaces and proper coarse spaces.

\begin{definition}
Let $X$ be a set equipped with a topology $\tau$ and a coarse structure $\mathcal{E}$.
\begin{itemize}
    \item The topology $\tau$ is \emph{compatible} with $\mathcal{E}$ if $\mathcal{E}$ contains an open neighbourhood of the diagonal $\Delta_X$ (equivalently, if there exists a uniformly bounded open cover of $X$).
    \item The triple $(X, \tau, \mathcal{E})$ is called a \emph{topological coarse space} if $\tau$ is compatible with $\mathcal{E}$.
    \item A topological coarse space $(X, \tau, \mathcal{E})$ is called a \emph{proper coarse space} if every coarsely bounded subset of $X$ is relatively compact in $(X, \tau)$.
\end{itemize}
\end{definition}

\begin{observation}
Any coarse space $(X, \mathcal{E})$ equipped with the discrete topology is a topological coarse space.
\end{observation}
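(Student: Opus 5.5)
The claim is that the discrete topology is always compatible with any coarse structure, so I will check the compatibility condition in the definition of a topological coarse space directly. I will verify that $\mathcal{E}$ contains an open neighbourhood of the diagonal $\Delta_X$ in $X \times X$. Under the discrete topology on $X$, the product topology on $X \times X$ is also discrete, because each singleton $\{(x,y)\} = \{x\} \times \{y\}$ is a product of open sets. Hence every subset of $X \times X$ is open, and in particular $\Delta_X$ itself is open.

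The key step is then immediate. By axiom (1) of a coarse structure, $\Delta_X \in \mathcal{E}$. So $\Delta_X$ is an entourage that is also an open neighbourhood of the diagonal, and $\tau$ is compatible with $\mathcal{E}$.

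As a cross-check against the equivalent formulation in the definition, I will also exhibit a uniformly bounded open cover. Take $\mathcal{U} = \{\{x\} \mid x \in X\}$. Each $\{x\}$ is open in the discrete topology, and $\mathcal{U}$ covers $X$. Moreover,
\[ E_{\mathcal{U}} = \bigcup_{x \in X} \{x\} \times \{x\} = \Delta_X \in \mathcal{E}, \]
so $\mathcal{U}$ is uniformly bounded.

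I do not expect any real obstacle. The only point needing care is that the product of discrete topologies is discrete, which is exactly what makes $\Delta_X$ open. No coarse connectedness or other hypothesis on $\mathcal{E}$ is needed beyond the axiom $\Delta_X \in \mathcal{E}$.
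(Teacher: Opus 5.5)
Your argument is correct and is the direct verification the paper has in mind; the paper states this observation without proof. In the discrete topology, $\Delta_X$ is open in $X \times X$ and lies in $\mathcal{E}$ by axiom (1), and your singleton-cover cross-check, whose associated entourage $E_{\mathcal{U}}$ is just $\Delta_X$, is equally valid.
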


\begin{remark}
In \cite{Roe}, John Roe defines a topology and a coarse structure to be compatible if $(X, \tau, \mathcal{E})$ constitutes a proper coarse space. We decouple these concepts, distinguishing between a topological coarse space and a proper coarse space. This modularity allows us to specify precisely which geometric theorems rely purely on topological compatibility and which fundamentally require local compactness.
\end{remark}

\subsection*{Maps Between Coarse Spaces}
\begin{definition}
     Let $\left(X, \mathcal{E}_X\right)$ and $\left(Y, \mathcal{E}_Y\right)$ be coarse spaces, $A$ be a set, and $\phi \colon X \to Y$ be a map.
     \begin{itemize}
         \item The map $\phi$ is called \emph{coarsely proper} if for each bounded subset $B \subseteq Y$, then $\phi^{-1}(B)$ is a bounded subset of $X$.
         \item The map $\phi$ is called \emph{controlled} or \emph{bornologous} if for each $E \in \mathcal{E}_X$, then $(\phi \times \phi)(E) \in \mathcal{E}_Y$. Equivalently, $\phi \colon X \to Y$ is controlled if for all $E \in \mathcal{E}_X$, there exists $F \in \mathcal{E}_Y$ such that $\phi(E[x]) \subseteq F[\phi(x)]$ for all $x \in X$.
         \item The map $\phi$ is called \emph{coarse} if it is coarsely proper and controlled.
         \item The maps $\phi_1 \colon A \to Y$ and $\phi_2 \colon A \to Y$ are \emph{close}, and we write $\phi_1 \sim \phi_2$, if $\{(\phi_1(a),\phi_2(a)) \mid a \in A\} \in \mathcal{E}_Y$.
         \item The coarse spaces $X$ and $Y$ are called \emph{coarsely equivalent} if there exist coarse maps $\phi \colon X \to Y$ and $\psi \colon Y \to X$ such that $\phi \circ \psi$ is close to $\operatorname{id}_Y$ and $\psi \circ \phi$ is close to $\operatorname{id}_X$. The coarse map $\phi$ is called a \emph{coarse inverse} of the map $\psi$.
         \item The coarse map $\phi \colon X \to Y$ is called a \emph{coarse equivalence} if it has a coarse inverse.
         \item The map $\phi$ is called \emph{coarsely surjective} if there exists $F \in \mathcal{E}_Y$ such that $F[\phi(X)]=Y$.
         \item Following Banakh and Protasov \cite{BanakhProtasov}, $\phi \colon X \to Y$ is \emph{coarsely closed} if for any coarsely disjoint subsets $A, C \subseteq X$ such that $A$ is $\phi$-saturated (i.e., $A = \phi^{-1}(\phi(A))$), their images $\phi(A)$ and $\phi(C)$ are coarsely disjoint in $Y$.
\end{itemize}
\end{definition}

\begin{lemma}\label{subsp-cor-equi}
    Let $(X, \mathcal{E}_X)$ be a coarse space, and let $A \subseteq X$ be a subset equipped with the subspace coarse structure. Then $A$ is coarsely dense in $X$ if and only if the inclusion map $\iota \colon A \hookrightarrow X$ is a coarse equivalence.
\end{lemma}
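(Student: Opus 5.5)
We prove both directions directly from the definitions, using the subspace coarse structure $\mathcal{E}_A = \{E \cap (A\times A) \mid E \in \mathcal{E}_X\}$ throughout.

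For the forward direction, suppose $A$ is $E$-coarsely dense for some $E \in \mathcal{E}_X$, so $E[A] = X$. We may replace $E$ by $E \cup E^{-1} \cup \Delta$ and assume it is symmetric and contains the diagonal. The inclusion $\iota$ is controlled, since $(\iota\times\iota)(E\cap(A\times A)) \subseteq E$. It is also coarsely proper: if $B \subseteq X$ is bounded, then $\iota^{-1}(B) = B \cap A$ satisfies $(B\cap A)\times(B\cap A) \subseteq B\times B$, so it is bounded in $A$ by the remark on subspace boundedness.

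Next we build a coarse inverse $r \colon X \to A$. Using the axiom of choice, for each $x \in X$ pick $r(x) \in A$ with $(x, r(x)) \in E$, and take $r(a) = a$ for $a \in A$. Then:
\begin{itemize}
\item $r \circ \iota = \operatorname{id}_A$ exactly.
\item $\iota \circ r$ is close to $\operatorname{id}_X$, because $\{(r(x),x)\} \subseteq E^{-1} = E$.
\item $r$ is controlled: for $F \in \mathcal{E}_X$, any pair $(x,y) \in F$ gives $(r(x),r(y)) \in E^{-1}\circ F \circ E$, and this set lies in $A\times A$. Hence $(r\times r)(F) \subseteq (E\circ F\circ E)\cap(A\times A) \in \mathcal{E}_A$.
\item $r$ is coarsely proper: if $B \subseteq A$ is bounded, then $r^{-1}(B) \subseteq E[B]$, and $E[B]\times E[B] \subseteq E\circ(B\times B)\circ E \in \mathcal{E}_X$.
\end{itemize}
So $r$ is a coarse inverse of $\iota$, and $\iota$ is a coarse equivalence.

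For the converse, suppose $\psi \colon X \to A$ is a coarse inverse of $\iota$. Then $\iota\circ\psi \sim \operatorname{id}_X$, so $F \coloneqq \{(\psi(x),x) \mid x\in X\} \in \mathcal{E}_X$. Since each $x$ satisfies $(x,\psi(x)) \in F^{-1}$ with $\psi(x) \in A$, we get $F^{-1}[A] = X$, which is exactly $F^{-1}$-coarse density of $A$.

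The only delicate point is that the bounded sets in the two definitions of properness are measured in different structures. Bounded sets in $A$ and in $X$ agree, as recorded in the preliminaries, so this causes no difficulty. The argument uses only the coarse structure axioms, with no topological hypotheses.
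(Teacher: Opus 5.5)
Your proposal is correct and follows essentially the same route as the paper: a choice-based retraction $r$ with $r|_A=\operatorname{id}_A$, control of $r$ via $E^{-1}\circ F\circ E$, and, for the converse, reading off density of $A$ from the entourage witnessing that $\iota\circ\psi$ is close to $\operatorname{id}_X$. You also check that $\iota$ and $r$ are coarsely proper, which the paper's definition of a coarse map requires but its proof leaves implicit.
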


\begin{proof}
    Assume $A$ is coarsely dense in $X$. By definition, there exists an entourage $E \in \mathcal{E}_X$ such that $X = E[A]$. Without loss of generality, we may assume $E$ is symmetric and contains the diagonal $\Delta_X$. 
    
   Because $X = E[A]$, for every $x \in X$, the intersection $E[x] \cap A$ is non-empty. Using the Axiom of Choice, we construct a projection map $p \colon X \to A$ by choosing exactly one element $p(x) \in E[x] \cap A$ for each $x \in X$. If $x \in A$, we choose $p(x) = x$. 

We must show that $p$ is bornologous and that $\iota$ and $p$ are coarse inverses. By construction, we are guaranteed that $(x, p(x)) \in E$ for all $x \in X$. Therefore, the composition $\iota \circ p$ is $E$-close to the identity map $\operatorname{id}_X$. Furthermore, since $p(a) = a$ for all $a \in A$, the reverse composition $p \circ \iota = \operatorname{id}_A$, which is trivially close to the identity on $A$.

To verify that $p$ is bornologous, let $M \in \mathcal{E}_X$ be an arbitrary entourage. If two points are close, say $(x, y) \in M$, we can evaluate the distance between their projections. By the composition of entourages, we have:
\[ (p(x), p(y)) \in E^{-1} \circ M \circ E \in \mathcal{E}_X. \]
Since the image of $p$ is contained in $A$, we explicitly have:
\[ (p \times p)(M) \subseteq (E^{-1} \circ M \circ E) \cap (A \times A). \]
Because $A$ is equipped with the subspace coarse structure, this intersection forms a valid entourage in $\mathcal{E}_A$. Thus, $p$ is bornologous. Since $\iota$ and $p$ are mutually bornologous coarse inverses, $\iota$ is a coarse equivalence.

Conversely, assume the inclusion map $\iota \colon A \hookrightarrow X$ is a coarse equivalence. By definition, there exists a bornologous coarse inverse $f \colon X \to A$ such that $\iota \circ f$ is coarsely close to $\operatorname{id}_X$. 

Choose a symmetric entourage $E \in \mathcal{E}_X$ such that $(x, (\iota \circ f)(x)) \in E$ for all $x \in X$. Because $(\iota \circ f)(x) = f(x) \in A$ and $E$ is symmetric, this directly implies that for all $x \in X$, $x \in E[f(x)] \subseteq E[A]$. Consequently, $X = E[A]$, and hence $A$ is coarsely dense.
\end{proof}

\begin{proposition}\label{prop:bounded_index}
Let $G$ be a topological group. A subgroup $H$, equipped with the subspace coarse structure, is coarsely equivalent to $G$ if and only if $G = H B$ for some bounded set $B \in \mathcal{B}$.
\end{proposition}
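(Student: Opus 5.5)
The plan is to read ``coarsely equivalent'' as saying that the inclusion $\iota \colon H \hookrightarrow G$ is a coarse equivalence. With that reading, the statement reduces to Lemma \ref{subsp-cor-equi} together with an explicit description of balls around $H$ in the left-coarse structure. By Lemma \ref{subsp-cor-equi}, $\iota$ is a coarse equivalence if and only if $H$ is coarsely dense in $G$. So it suffices to show that $H$ is coarsely dense in $(G, \mathcal{E}_{\mathcal{P}})$ if and only if $G = HB$ for some $B \in \mathcal{B}_{\mathcal{P}}$.

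The key computation is the ball $E_B[H]$ for a basic entourage. By Definition \ref{def:rosendal_pseudometric}, the sets $E_B = \{(x,y) \mid x^{-1}y \in B\}$ with $B \in \mathcal{B}_{\mathcal{P}}$ form a basis of $\mathcal{E}_{\mathcal{P}}$. Using the convention $E[A] = \{x \mid \exists a \in A,\ (x,a) \in E\}$, we get
\[
E_B[h] = \{ y \in G \mid y^{-1}h \in B \} = hB^{-1},
\]
and hence $E_B[H] = HB^{-1}$.

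For the forward direction, suppose $H$ is $E$-coarsely dense for some $E \in \mathcal{E}_{\mathcal{P}}$. Enlarge $E$ to a basic entourage $E_B$; this only enlarges $E[H]$, so $G = E_B[H] = HB^{-1}$. Since $\mathcal{B}_{\mathcal{P}}$ is a group bornology, it is closed under inversion, so $B' = B^{-1}$ is bounded and $G = HB'$.

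For the converse, suppose $G = HB$ with $B$ bounded. Then $B^{-1}$ is bounded, $E_{B^{-1}} \in \mathcal{E}_{\mathcal{P}}$, and $E_{B^{-1}}[H] = HB = G$. So $H$ is coarsely dense, and Lemma \ref{subsp-cor-equi} finishes the proof.

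The argument has two delicate points. The first is routine: the left/right convention in $E_B$ and in $E[A]$ decides whether $B$ or $B^{-1}$ appears, and closure of $\mathcal{B}_{\mathcal{P}}$ under inversion absorbs this. The second is the real obstacle: the phrase ``coarsely equivalent'' must be read via the inclusion, not as an abstract coarse equivalence. Under the abstract reading the forward implication fails. For example, in $G = F_2 = \langle a,b \rangle$ with the discrete topology, where $\mathcal{E}_{\mathcal{P}}$ is the word-metric structure, the subgroup $H = \langle a^2, b^2 \rangle$ has infinite index. It is undistorted, so its subspace structure is that of its own word metric. It is free of rank $2$, hence abstractly coarsely equivalent to $G$, yet $G \neq HB$ for any finite $B$. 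I would therefore state the inclusion interpretation explicitly at the start of the proof.
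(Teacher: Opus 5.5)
Your proposal is correct and follows essentially the paper's route: reduce via Lemma~\ref{subsp-cor-equi} to coarse density of $H$, and identify $E_B[H]$ with $HB^{\pm 1}$ for basic entourages $E_B$. Your extra care on both flagged points is justified. Under the paper's stated convention for $E[A]$ one indeed gets $E_B[H]=HB^{-1}$; the paper's computation $E_B[H]=HB$ silently uses $(h,g)\in E_B$, which is harmless since $\mathcal{B}_{\mathcal{P}}$ is inverse-closed. Likewise, the paper's converse direction tacitly reads ``coarsely equivalent'' as ``the inclusion is a coarse equivalence'', and your $F_2$ example with $H=\langle a^2,b^2\rangle$ shows this reading is necessary.
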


\begin{proof}
Assume $G = H B$ for some bounded set $B \in \mathcal{B}$. In the left-invariant coarse structure of a topological group, entourages are generated by bounded sets via $E_B = \{(x,y) \in G \times G \mid x^{-1}y \in B\}$. Evaluating the entourage neighbourhood of $H$ yields:
\begin{align*}
E_B[H] &= \{g \in G \mid \exists h \in H \text{ such that } (h,g) \in E_B\} \\
       &= \{g \in G \mid \exists h \in H \text{ such that } h^{-1}g \in B\} = H B.
\end{align*}
 Since we assumed $G = HB$, we directly obtain $G = E_B[H]$ which implies $H$ is a coarsely dense subspace. By Lemma \ref{subsp-cor-equi}, $H$ is coarsely equivalent to $G$.

Conversely, assume that $H$ and $G$ are coarsely equivalent. By Lemma \ref{subsp-cor-equi}, $H$ is coarsely dense in $G$, meaning there exists an arbitrary entourage $E \in \mathcal{E}_G$ such that $G = E[H]$. By the definition of the left-invariant coarse structure, it is contained within an entourage $E_B$ generated by some bounded set $B \in \mathcal{B}$. Therefore, $G = E_B[H]$. As established above, $E_B[H] = HB$, yielding $G = H B$.
\end{proof}

We provide an example of a coarsely proper map that is not coarsely closed. 

\begin{example}\label{ex:proper_not_closed}
Let the subspace $X = (\mathbb{Z}_{\ge 0} \times \{0\}) \cup (\{0\} \times \mathbb{N})$ be realised in $\mathbb{R}^2$ equipped with the taxi metric $d((x_1, y_1), (x_2, y_2)) = \lvert x_1 - x_2 \rvert + \lvert y_1 - y_2 \rvert$. This space represents two divergent discrete rays originating from a central origin.

Let $A = \mathbb{Z}_{\ge 0} \times \{0\}$ be the horizontal ray and $B = \{0\} \times \mathbb{N}$ be the vertical ray (excluding the origin). Because the distance between any $(n, 0) \in A$ and $(0, m) \in B$ is exactly $n + m$, the rays diverge, making $A$ and $B$ coarsely disjoint.

Equipping the target space $\mathbb{Z}_{\ge 0}$ with the coarse structure induced by the standard Euclidean metric, we define a map $f \colon X \to \mathbb{Z}_{\ge 0}$ explicitly by:
\[ f(n, m) \coloneqq  \begin{cases}  2n & \text{if } m = 0 \text{ (Branch } A\text{)} \\ 2m - 1 & \text{if } n = 0 \text{ and } m \in \mathbb{N} \text{ (Branch } B\text{)} \end{cases} \]
This map is coarsely proper, as the preimage of any bounded interval is bounded in $X$. Furthermore, the set $A$ is $f$-saturated. However, the image $f(A)$ consists of all non-negative even integers, and $f(B)$ consists of all positive odd integers. Because these images alternate with a metric distance of one everywhere in the target space, they are entirely merged by the entourage $E_1$, meaning they are not coarsely disjoint. Thus, the map $f$ fails to be coarsely closed.
\end{example}

\section{Group Actions and Orbit Maps}

In lieu of strict isometries, we generalise our framework by requiring the group action to consist of uniform coarse equivalences.

\begin{definition}[\cite{MaRashedDydak}] \label{def:uniform_coarse_action}
Let a group $G$ act on a coarse space $(X, \mathcal{E})$. 
\begin{itemize}
    \item The action $\mu \colon G \times X \to X$, where $\mu(g, x) = gx$, is \emph{continuous} if the function $\mu$ is a continuous map with respect to the product topology on $G \times X$ and the topology on $X$. 
    \item The action is by \emph{uniform coarse equivalences} if for every entourage $E \in \mathcal{E}$, the family $\{gE\}_{g \in G}$ where: 
    \[ gE \coloneqq \{(gx, gy) \mid (x,y) \in E\} \]
    is uniformly controlled, meaning there exists a master entourage $M_E \in \mathcal{E}$ such that $\bigcup_{g \in G} gE \subseteq M_E$.
\end{itemize}
\end{definition}

\begin{example}
Consider the group $G = \mathbb{Z}$ acting on the space $X = \mathbb{R}$. Endow $\mathbb{R}$ with the perturbed metric:
\[ d(x, y) = \lvert x - y \rvert + \lvert \cos(x) - \cos(y) \rvert \]
The standard translation action of $G$ given by $nx = x + n$ is \emph{not} an isometry under $d$, because the oscillatory term changes depending on the integer shift $n$. However, we observe that for $E \subseteq E_r$ and for all $(x,y) \in E$ and $n \in \mathbb{Z}$:
\[ d(x+n, y+n) \le \lvert x - y \rvert + 2 \le d(x,y) + 2 \le r+2 \]
and therefore $\bigcup_{n \in \mathbb{Z}} nE \subseteq E_{r+2}$, which implies the group acts by uniform coarse equivalences.
\end{example}

In arbitrary coarse spaces, defining a quotient structure can lead to unwanted behaviour because the composition axiom of entourages might fail. However, requiring $G$ to act by uniform coarse equivalences ensures the quotient coarse structure on the orbit space $X/G$ is well-behaved.

\begin{proposition} \label{prop:quotient_structure}
Let $G \curvearrowright X$ be a continuous action of a group $G$ on a coarse space $X$ by uniform coarse equivalences. Let $\pi \colon X \to X/G$ be the canonical projection map. Then the pushforward collection:
\[ \mathcal{B} \coloneqq \{ (\pi \times \pi)(E) \mid E \in \mathcal{E}_X \} \]
forms a fundamental base for the coarse structure $\mathcal{E}_{X/G}$ on the orbit space $X/G$ generated by $\mathcal{B}$.
\end{proposition}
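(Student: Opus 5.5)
To show that $\mathcal{B}$ is a base for the coarse structure it generates, it suffices, by the remark in the preliminaries, to check that $\mathcal{B}$ contains the diagonal and is closed, up to enlargement by another member of $\mathcal{B}$, under inverses, finite unions and compositions. Continuity of the action plays no role; only the uniform coarse equivalence hypothesis is used. Write $\tilde E \coloneqq (\pi\times\pi)(E)$. Without loss of generality we enlarge entourages to be symmetric and to contain $\Delta_X$, since $\tilde E$ only grows.

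The easy axioms come first. We have $\tilde\Delta_X = \Delta_{X/G}$. Since $(\pi\times\pi)(E^{-1}) = \tilde E^{-1}$, inverses are handled, and since $(\pi\times\pi)(E\cup F) = \tilde E\cup\tilde F$, finite unions are handled. Subsets are handled because any $\tilde E' \subseteq \tilde E$ is already of the required form "contained in a member of $\mathcal{B}$."

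The main step, and the only real obstacle, is composition. Suppose $([x],[z]) \in \tilde E\circ\tilde F$. Then there are points $x', y_1$ with $[x']=[x]$, $[y_1]=[y]$ and $(x',y_1)\in E$, and points $y_2, z'$ with $[y_2]=[y]$, $[z']=[z]$ and $(y_2,z')\in F$. The two representatives $y_1$ and $y_2$ of the middle orbit differ, say $y_1 = g y_2$. The fix is to translate the second pair: $(gy_2, gz') = (y_1, gz') \in gF$. Hence
\[ (x', gz') \in E\circ gF \subseteq E\circ M_F, \]
where $M_F\in\mathcal{E}_X$ is the master entourage with $\bigcup_{g\in G} gF \subseteq M_F$ supplied by the uniform coarse equivalence hypothesis. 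Since $[gz'] = [z]$, we get $([x],[z]) \in (\pi\times\pi)(E\circ M_F)$. Therefore
\[ \tilde E\circ\tilde F \subseteq \widetilde{E\circ M_F} \in \mathcal{B}. \]

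Induction then shows that every finite union of finite compositions of elements of $\mathcal{B}$ and their inverses lies inside a single member of $\mathcal{B}$. Hence $\mathcal{E}_{X/G} = \{\tilde A \mid \tilde A \subseteq \tilde E \text{ for some } E\in\mathcal{E}_X\}$, so $\mathcal{B}$ is a fundamental base.
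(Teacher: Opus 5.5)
Your proof is correct and follows the same route as the paper. Composition is the only substantive axiom, and it is handled by absorbing the mismatch between the two representatives of the middle orbit into a master entourage, giving $\tilde E \circ \tilde F \subseteq (\pi\times\pi)(E \circ M_F)$. You also spell out the translation step $y_1 = g y_2$, which the paper leaves implicit. You observe that a single master entourage suffices, whereas the paper uses $M_{E_1}\circ M_{E_2}$, and you check the diagonal, inverse and union axioms that the paper omits.
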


\begin{proof}
    We must show that for any two basic quotient entourages $\tilde{E}_1 \coloneqq (\pi \times \pi)(E_1)$ and $\tilde{E}_2 \coloneqq (\pi \times \pi)(E_2)$, their composition $\tilde{E}_1 \circ \tilde{E}_2$ is bounded by a single projected entourage from $X$.

    By choosing two symmetric master entourages $M_{E_1}$ and $M_{E_2}$ for $E_1$ and $E_2$ respectively, we obtain $\tilde{E}_1 \circ \tilde{E}_2 \subseteq (\pi \times \pi)(M_{E_1} \circ M_{E_2})$. Since $M_{E_1} \circ M_{E_2} \in \mathcal{E}_X$, its image under $\pi \times \pi$ belongs to $\mathcal{B}$, verifying the composition axiom and establishing that $\mathcal{B}$ forms a valid base for the coarse structure $\mathcal{E}_{X/G}$.
\end{proof}

If the action map $\mu \colon G \times X \to X$ defined by $\mu(g, x) = gx$ is controlled, then the action operates by uniform coarse equivalences. The converse is false. 

\begin{proposition} \label{prop:controlled_action}
Let $G \curvearrowright X$ be an action of a coarse group $G$ on a coarse space $X$. If the action map $\mu \colon G \times X \to X$ is controlled with respect to the product coarse structure on $G \times X$, then the action operates by uniform coarse equivalences.
\end{proposition}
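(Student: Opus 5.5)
The plan is to fix an entourage $E \in \mathcal{E}_X$ and produce a single master entourage containing every translate $gE$. The idea is to realise the union $\bigcup_{g \in G} gE$ as the image under $\mu \times \mu$ of one controlled subset of $(G \times X) \times (G \times X)$.

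First, I will take the diagonal entourage $\Delta_G$ of $G$, which belongs to $\mathcal{E}_G$ by the first axiom of a coarse structure. Second, I will form the product-type entourage
\[ W \coloneqq \{ ((g, x), (g, y)) \mid g \in G,\ (x,y) \in E \}. \]
This is exactly one of the generating sets of $\mathcal{E}_{G \times X}$ from the example defining the product coarse structure, taken with $E_X = E$ and $E_G = \Delta_G$, so $W \in \mathcal{E}_{G \times X}$. Third, I will compute
\[ (\mu \times \mu)(W) = \{ (gx, gy) \mid g \in G,\ (x,y) \in E \} = \bigcup_{g \in G} gE. \]
Because $\mu$ is assumed controlled, $(\mu \times \mu)(W) \in \mathcal{E}_X$. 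I then set $M_E \coloneqq (\mu \times \mu)(W)$ (or any entourage containing it) as the required master entourage of Definition \ref{def:uniform_coarse_action}.

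The only point needing care is confirming that $W$ is indeed an entourage of the product structure. This is immediate from the definition of the generating sets, since $(g,g) \in \Delta_G$ for every $g$. Nothing about the topology or the continuity of the action is used. The argument is essentially a one-line verification, so I expect no genuine obstacle. Optionally, I would remark why the converse fails: uniform coarse equivalence only controls pairs with equal group coordinate, whereas controlledness of $\mu$ also requires control over pairs $(g,x),(h,x)$ with $(g,h) \in \mathcal{E}_G$. A counterexample could be sketched with a trivial-structured group acting with unbounded orbit displacement.
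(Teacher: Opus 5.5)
Your proof is correct and essentially identical to the paper's. Both build $W$ from $\Delta_G$ and $E$, note that $(\mu\times\mu)(W)=\bigcup_{g\in G} gE$, and use controlledness of $\mu$ to take this image as the master entourage $M_E$. One caution on your optional remark: if ``trivial-structured'' means the diagonal-only coarse structure on $G$, the converse actually holds there, so a counterexample needs a group coarse structure with off-diagonal entourages, e.g.\ the maximal one, or the paper's rotation example with $G=(\mathbb{R},+)$.
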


\begin{proof}
Let $E \in \mathcal{E}_X$. We construct the product entourage $W \in \mathcal{E}_{G \times X}$ generated by the diagonal $\Delta_G \in \mathcal{E}_G$ and the entourage $E$:
\[ W \coloneqq \{ ((g, x), (g, y)) \in (G \times X) \times (G \times X) \mid g \in G, \ (x, y) \in E \} \]
Because the action map $\mu$ is controlled, the image of $W$ under $\mu \times \mu$ must be an entourage in $\mathcal{E}_X$. We compute this image:
\[ (\mu \times \mu)(W) = \{ (\mu(g,x), \mu(g,y)) \mid g \in G, \ (x, y) \in E \} = \bigcup_{g \in G} gE. \]
Setting the master entourage to be $M_E \coloneqq (\mu \times \mu)(W)$, we have $M_E \in \mathcal{E}_X$ such that $\bigcup_{g \in G} gE \subseteq M_E$. Therefore, the action operates by uniform coarse equivalences.
\end{proof}

An action by uniform coarse equivalences does not guarantee that the action map $\mu$ is controlled. Requiring $\mu$ to be controlled is a restrictive condition that demands bounded group elements displace the entire space uniformly. 

\begin{example}
Consider the topological group $G = (\mathbb{R}, +)$ acting on the pseudometric space $X = \mathbb{R}^2$ equipped with the standard Euclidean metric $d_X(u, v) = \lVert u - v \rVert_2$. The action is given by rotation around the origin: for an angle $\theta \in \mathbb{R}$ and a vector $v \in \mathbb{R}^2$, the action is $R_\theta v$, where $R_\theta$ is the standard $2 \times 2$ rotation matrix. 

Because every rotation is an isometry of the Euclidean metric, we have $d_X(R_\theta u, R_\theta v) = d_X(u, v)$. The action is by uniform coarse equivalences.

However, the action map $\mu \colon \mathbb{R} \times \mathbb{R}^2 \to \mathbb{R}^2$ is not a controlled map. Fix $0 < \epsilon < 1$, and consider the entourage $E_{\epsilon} \coloneqq \{(\alpha, \beta) \in \mathbb{R} \times \mathbb{R} \mid \lvert \alpha - \beta \rvert \le \epsilon\}$ in $\mathcal{E}_G$. Let $\Delta_X$ be the diagonal entourage in $X$. 

We construct the corresponding entourage $W \in \mathcal{E}_{G \times X}$ in the product space:
\[ W \coloneqq \{ ((\alpha, u), (\beta, v)) \in (\mathbb{R} \times \mathbb{R}^2) \times (\mathbb{R} \times \mathbb{R}^2) \mid (\alpha, \beta) \in E_\epsilon, \ (u, v) \in \Delta_X \} \]
Applying $\mu \times \mu$ to this entourage pairs points with their slightly rotated selves:
\[ (\mu \times \mu)(W) = \{ (R_\alpha v, R_\beta v) \mid \lvert \alpha - \beta \rvert \le \epsilon, \ v \in \mathbb{R}^2 \} \]

Fix an angle $0 < \delta \le \epsilon$. For any bounding entourage $F_r \in \mathcal{E}_X$, we can choose a vector $v \in \mathbb{R}^2$ with a sufficiently large magnitude such that $\lVert v \rVert_2 > \frac{r}{2 \sin(\delta/2)}$. Notice that for $\alpha = \delta$ and $\beta = 0$, we have:
\[ d_X(v, R_\delta v) = 2 \lVert v \rVert_2 \sin(\delta/2) > r \]
The image $(\mu \times \mu)(W)$ cannot be contained in any uniformly bounded entourage $F_r$. Thus, $\mu$ fails to be a controlled map.
\end{example}

The joint continuity of the action map $\mu \colon G \times X \to X$ guarantees the separate continuity of both the translation map $\gamma_g \colon X \to X$, defined by $x \mapsto gx$, and the orbit map $\rho_x \colon G \to X$. In the large-scale framework, if the action operates by uniform coarse equivalences, the translation map $\gamma_g$ is a coarse equivalence. However, without additional hypotheses, the orbit map $\rho_x$ is not even guaranteed to map coarsely bounded subsets of $G$ to coarsely bounded subsets of $X$—as the following example demonstrates.

\begin{example} \label{ex:non_bornologous_orbit}
Let the group $G=(\mathbb{Z},+)$ act on the space $X=\mathbb{R}$ (equipped with the coarse structure induced by the standard Euclidean metric) by: $n x = n+x$. The action operates by uniform coarse equivalences.

Equip $G=\mathbb{Z}$ with the bornology $\mathcal{B}$ consisting of all countable subsets. Under this valid coarse structure, the entire group $B = \mathbb{Z}$ is considered a coarsely bounded set.

Fix the basepoint $x=0 \in X$. The orbit of the coarsely bounded set $B$ under the action is $B 0 = \mathbb{Z} 0 = \mathbb{Z}$. But $\mathbb{Z}$ is an unbounded subset of $\mathbb{R}$. Thus, an action by uniform coarse equivalences does not guarantee a bornologous orbit map without additional topological hypotheses.
\end{example}

\subsection*{Standing Convention on Coarse Spaces and Group Actions}
Unless explicitly stated otherwise, we assume throughout the remainder of this manuscript that all coarse spaces $(X, \mathcal{E})$ are \emph{coarsely connected}, and that all group actions $G \curvearrowright X$ operate by \emph{uniform coarse equivalences}.

\begin{lemma}\label{Lem:Expan_orbit}
Let $G \curvearrowright X$ be an action, and let $x_0 \in X$. If the orbit map $\rho_{x_0} \colon G \to X$ is coarsely proper, then it is expanding.
\end{lemma}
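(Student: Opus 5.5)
We read \emph{expanding} in the sense of Rosendal: for every entourage $E \in \mathcal{E}_X$, the pulled-back set
\[ (\rho_{x_0} \times \rho_{x_0})^{-1}(E) = \{ (g,h) \in G \times G \mid (g x_0, h x_0) \in E \} \]
is an entourage of $G$. The plan is to use left-invariance twice. First, the uniform coarse equivalence hypothesis moves any pair of orbit points back to the basepoint. Second, the left-invariant entourages $E_B$ of Definition~\ref{def:rosendal_pseudometric} turn a single bounded set of ``relative positions'' $g^{-1}h$ into an entourage of $G$.

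\textbf{Step 1 (translate to the basepoint).} Fix $E \in \mathcal{E}_X$. Since the action is by uniform coarse equivalences, there is a master entourage $M_E$ with $\bigcup_{g} gE \subseteq M_E$. Suppose $(gx_0, hx_0) \in E$. Applying $g^{-1}$ gives $(x_0, g^{-1}h\,x_0) \in g^{-1}E \subseteq M_E$. Hence $g^{-1}h\,x_0 \in M_E^{-1}[x_0]$.

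\textbf{Step 2 (the ball is bounded).} Set $B_E \coloneqq M_E^{-1}[x_0]$. We check that $B_E$ is coarsely bounded in $X$. Indeed,
\[ B_E \times B_E \subseteq M_E^{-1} \circ \{(x_0,x_0)\} \circ M_E \subseteq M_E^{-1} \circ M_E , \]
and the right-hand side is an entourage by the composition and inverse axioms. Coarse connectedness is not even needed here, since $\{(x_0,x_0)\} \subseteq \Delta$.

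\textbf{Step 3 (apply properness).} By the coarse properness of $\rho_{x_0}$, the set $A_E \coloneqq \rho_{x_0}^{-1}(B_E) \subseteq G$ is coarsely bounded. Step 1 shows $g^{-1}h \in A_E$ for every pair $(g,h) \in (\rho_{x_0}\times\rho_{x_0})^{-1}(E)$. Therefore
\[ (\rho_{x_0}\times\rho_{x_0})^{-1}(E) \subseteq E_{A_E} = \{(g,h) \mid g^{-1}h \in A_E\}. \]
The set $E_{A_E}$ is a basic entourage of $\mathcal{E}_\mathcal{P}$ by Definition~\ref{def:rosendal_pseudometric}. Concretely, for each $d \in \mathcal{P}$, left-invariance gives
\[ \sup_{(g,h) \in E_{A_E}} d(g,h) = \sup_{a \in A_E} d(1_G, a) < \infty \]
by Lemma~\ref{lem:pseudometric_boundedness}. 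Since entourages are closed under subsets, this proves the claim.

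The only genuinely delicate point is Step 1: without uniformity of the family $\{g^{-1}E\}_{g \in G}$, the translated pairs $(x_0, g^{-1}h\,x_0)$ could escape every fixed bounded ball around $x_0$. The standing convention that actions are by uniform coarse equivalences is exactly what supplies the single entourage $M_E$ needed there. If ``expanding'' is instead meant in the weaker metric form (for every $R$ there is $S$ such that $d_X(gx_0, hx_0) \le R$ implies $g^{-1}h$ lies in a bounded set depending only on $R$), the same argument gives it verbatim with $E = E_R$.
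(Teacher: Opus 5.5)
Your proof is correct and takes essentially the same route as the paper: use a master entourage $M_E$ to move each pair $(gx_0,hx_0)$ back to the basepoint, pull the bounded ball around $x_0$ back through the coarsely proper map $\rho_{x_0}$ to obtain a bounded $K\subseteq G$, and conclude $(\rho_{x_0}\times\rho_{x_0})^{-1}(E)\subseteq E_K$. The only cosmetic difference is that you work with $M_E^{-1}[x_0]$ and check its boundedness explicitly, whereas the paper takes $M_E$ symmetric and simply asserts that $M_E[x_0]$ is bounded.
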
 

\begin{proof}
Take $E \in \mathcal{E}_X$. Since the action is by uniform coarse equivalences, there exists a symmetric entourage $M_E \in \mathcal{E}_X$ such that $\bigcup_{g \in G} gE \subseteq M_E$. 

Because $\rho_{x_0}$ is coarsely proper and $M_E[x_0]$ is bounded in $X$, the preimage $K \coloneqq \rho_{x_0}^{-1}(M_E[x_0])$ is bounded in $G$. Thus, $E_K \coloneqq \{(g,h) \in G \times G \mid g^{-1}h \in K\} \in \mathcal{E}_G$. Note that $g^{-1}h \in K$ if and only if  $\rho_{x_0}(g^{-1}h) = g^{-1}hx_0 \in M_E[x_0]$, which means $(g^{-1}hx_0, x_0) \in M_E$.

Evaluate the preimage of $E$ under $\rho_{x_0} \times \rho_{x_0}$:
\[
(\rho_{x_0} \times \rho_{x_0})^{-1}(E) = \{(g,h) \in G \times G \mid (gx_0, hx_0) \in E\}.
\]
If $(gx_0, hx_0) \in E$, applying the uniform action of $g^{-1}$ yields $(x_0, g^{-1}hx_0) \in g^{-1}E \subseteq M_E$. Because $M_E$ is symmetric, $(g^{-1}hx_0, x_0) \in M_E$. This establishes $(g,h) \in E_K$, proving $(\rho_{x_0} \times \rho_{x_0})^{-1}(E) \subseteq E_K \in \mathcal{E}_G$. 
\end{proof}

We now establish the unified conditions under which the orbit map preserves coarse boundedness and, more restrictively, when it constitutes a bornologous map.

\begin{lemma} \label{lem:bounded_orbits_unified}
Let $G \curvearrowright X$ be a continuous action of a topological group $G$ on a topological coarse space $X$ by uniform coarse equivalences. Suppose either:
\begin{enumerate}
    \item[(a)] $G$ is a Polish group, or
    \item[(b)] $(X, d_X)$ is a pseudometric space.
\end{enumerate}
If $K \subseteq G$ is a coarsely bounded subset of $G$, then for any $x \in X$, the orbit set $\rho_x(K) = \{kx \mid k \in K\}$ is a coarsely bounded subset of $X$.
\end{lemma}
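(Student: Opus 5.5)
The idea is to prove the statement only for the basepoint $x$. We will control $\rho_x$ on a small identity neighbourhood using the topology, and then spread that control over a coarsely bounded $K$ using an algebraic description of $K$ together with the uniform control of the translates $gE$.

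\textbf{Step 1 (local boundedness of the orbit map).} Since $X$ is a topological coarse space, there is a uniformly bounded open cover of $X$, so $x$ has an open neighbourhood $U$ with $U \times U \in \mathcal{E}$. The action is jointly continuous, so the orbit map $\rho_x$ is continuous. Hence $V \coloneqq \rho_x^{-1}(U)$ is an open neighbourhood of $1_G$, and $Vx \subseteq U$ is coarsely bounded. Replacing $V$ by $V \cap V^{-1}$, we may assume $V$ is symmetric.

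\textbf{Step 2 (algebraic description of $K$).} In case (a), Lemma \ref{lem:algebraic_boundedness} gives a finite set $F \subseteq G$ and an integer $k \ge 1$ with $K \subseteq (FV)^k$. In case (b), I would use the same covering. As far as I recall, the implication ``coarsely bounded $\Rightarrow$ $K \subseteq (FV)^k$ for every identity neighbourhood $V$'' in \cite[Proposition 2.7]{Rosendal} holds for arbitrary topological groups, and separability is needed only for the converse. I would double-check this against Rosendal's proof.

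If that fails, the fallback for (b) is to build a continuous left-invariant pseudometric directly from the pseudometric $d_X$. The candidate is a Birkhoff--Kakutani type construction from a chain of symmetric identity neighbourhoods $V_n$ with $V_{n+1}^3 \subseteq V_n$, where $V_0 = V$. Its diameter on $K$ must be finite by Lemma \ref{lem:pseudometric_boundedness}, and this should translate into a word-length bound of the form $K \subseteq (FV)^k$. A naive alternative such as $\sup_{f \in G} d_X(fgx, fhx)$ is left-invariant and finite, because uniform control applies to the single pair $(gx,hx)$, but its continuity is unclear. \emph{I expect justifying the covering $K \subseteq (FV)^k$ in case (b) to be the main obstacle.}

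\textbf{Step 3 (induction).} Set $T \coloneqq FVx \cup \{x\} = \bigcup_{f \in F} f(Vx) \cup \{x\}$. Each $f(Vx)$ is bounded because the translation by $f$ is a coarse equivalence. A finite union of bounded sets is bounded by coarse connectedness, so $T$ is bounded and $E \coloneqq T \times T \in \mathcal{E}$. Let $M_E$ be a master entourage for $E$.

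Now take $a \in (FV)^j$ and $b \in FV$. Since $(bx, x) \in E$, we get $(abx, ax) \in aE \subseteq M_E$. Hence
\[ (FV)^{j+1}x \subseteq M_E\bigl[(FV)^j x\bigr]. \]
For a bounded set $S$ and an entourage $M$, the set $M[S]$ is bounded, since $M[S] \times M[S] \subseteq M \circ (S \times S) \circ M^{-1}$. Starting from $(FV)^1x \subseteq T$, induction on $j$ shows that each $(FV)^j x$ is bounded. In particular $\rho_x(K) \subseteq (FV)^k x$ is coarsely bounded.
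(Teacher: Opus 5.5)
\paragraph{Case (a) is fine.} Your argument for case (a) is correct and is essentially the paper's. You pull back an open bounded neighbourhood to get $V$, invoke Lemma~\ref{lem:algebraic_boundedness}, and propagate along words in $FV$ using master entourages. The paper chains $M_U\circ M_W$ along $k=a_1v_1\cdots a_mv_m$; your single master entourage for $T\times T$ with $(FV)^{j+1}x\subseteq M_E[(FV)^jx]$ is equivalent bookkeeping.

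\paragraph{The gap is case (b).} Your recollection of Rosendal's Proposition 2.7 has the directions reversed. A covering $K\subseteq (FV)^k$ for every identity neighbourhood $V$ implies coarse boundedness in \emph{every} topological group: take $V$ to be the open unit ball of a given $d\in\mathcal{P}$. The direction you need, from coarse boundedness to a covering, is exactly where countable generation over $V$ is used, and it fails without it. Give $\operatorname{Sym}(\mathbb{Z})$ the discrete topology. Every left-invariant pseudometric is then continuous, and the Bergman/uncountable-cofinality argument in the paper's example never uses the topology, so all of them are bounded and $K=G$ is coarsely bounded. But for $V=\{1\}$ every $(FV)^k=F^k$ is finite. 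This $V$ is a legitimate output of your Step~1: take left translation on $X=G$ with the $\{0,1\}$-valued metric and $U=\{x\}$. Your fallback targets the same covering, so it cannot succeed. Moreover, a Birkhoff--Kakutani pseudometric built from a descending chain $V_{n+1}^3\subseteq V_n$ is bounded by construction, so its finiteness on $K$ says nothing.

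\paragraph{The paper's route, and why your doubt is justified.} The paper handles (b) without any covering. It applies Lemma~\ref{lem:pseudometric_boundedness} to $D(g,h)=\sup_{a\in G}d_X(agx,ahx)$---the candidate you set aside---and evaluates at $a=1_G$. The paper deduces continuity of $D$ from continuity of the action. That is fine for isometric actions, where $D(g,h)=d_X(gx,hx)$, but false for uniform coarse equivalences in general. Let $\mathbb{R}$ act by translations on $\mathbb{R}$ with $d(x,y)=\lvert x-y\rvert+\lvert\sin(x^2)-\sin(y^2)\rvert$. This is a continuous action by uniform coarse equivalences, since $d(x+t,y+t)\le d(x,y)+2$. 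With base point $0$, one gets $D(0,t)=\lvert t\rvert+2$ for all $t\neq0$, so $D$ is not continuous.

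\paragraph{What is still needed.} What does hold is that $D$ is left-invariant and $D(1_G,v)\le c$ on your $V$: take $U$ the open unit ball at $x$ and $c$ with $M_{E_1}\subseteq E_c$. The missing ingredient is that such a pseudometric is bounded on coarsely bounded sets. One way to get it:
\begin{enumerate}
\item On the open subgroup $H=\langle V\rangle$ one has $D(1_G,h)\le c\,\ell_V(h)$.
\item Take a continuous length function $\rho$ on $H$ whose balls lie in powers of $V$, via Birkhoff--Kakutani along a two-sided chain with $W_n=V^{3^n}$ for $n\ge0$.
\item Let $\partial$ be the left-invariant pseudometric generated by the weight $\rho$ on $H$ and $D(1_G,s)+1$ off $H$. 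It is continuous because $H$ is open.
\item After merging consecutive factors from $H$, a factorisation of weight at most $r$ has at most $2r+1$ factors, each of $D$-length bounded in terms of $r$.
\end{enumerate}
Hence $D$ is bounded on $\partial$-balls. Since $K$ has finite $\partial$-diameter by Lemma~\ref{lem:pseudometric_boundedness}, $D$ is bounded on $K$, which gives the bound on $Kx$.
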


\begin{proof}
Let $x_0 \in X$ be fixed. We proceed by establishing the boundedness of the orbit set $\rho_{x_0}(K) = \{kx_0 \mid k \in K\}$ under each respective hypothesis.

\noindent \textbf{Case (a): $G$ is a Polish group.}
 Let $U \in \mathcal{E}_X$ be an open entourage containing the diagonal. Because the action $G \curvearrowright X$ is continuous, the orbit map $\rho_{x_0} \colon G \to X$ defined by $\rho_{x_0}(g) = gx_0$ is continuous. Since the constant map $c_{x_0} \colon G \to X$ given by $c_{x_0}(g) = x_0$ is continuous, the product map $f_{x_0} \colon G \to X \times X$ defined by $f_{x_0}(g) = (gx_0, x_0)$ is continuous. The set $V \coloneqq \{g \in G \mid (gx_0, x_0) \in U\} = f_{x_0}^{-1}(U)$ is an open neighbourhood of the identity $1_G$. By Lemma \ref{lem:algebraic_boundedness}, there exists a finite subset $F \subseteq G$ and an integer $m \ge 1$ such that $K \subseteq (FV)^m$.

Consider the entourage $W \coloneqq \{(ax_0, x_0) \mid a \in F\}$. Since $U, W \in \mathcal{E}_X$, we can find master entourages $M_U, M_W \in \mathcal{E}_X$ such that $\bigcup_{g \in G} gU \subseteq M_U$ and $\bigcup_{g \in G} gW \subseteq M_W$. Notice that for all $v \in V$, $a \in F$, and $g \in G$, we have $(gvx_0, gx_0) \in M_U$ and $(gax_0, gx_0) \in M_W$.

Let $k \in K$. Since $K \subseteq (FV)^m$, there exist sequences of elements $a_1, \dots, a_m \in F$ and $v_1, \dots, v_m \in V$ such that $k = a_1 v_1 \dots a_m v_m$. We claim that $Kx_0 \subseteq (M_U \circ M_W)^m[x_0]$. To this end, define a finite sequence of group elements $\{g_i\}_{i=0}^m$ by $g_0 \coloneqq 1_G$ and $g_i \coloneqq g_{i-1} a_i v_i = a_1 v_1 \dots a_i v_i$ for $1 \le i \le m$. Notice that $g_m = k$. 

We observe that for all $1 \le i \le m$, the following relations hold:
\begin{align*}
    (g_i x_0, g_{i-1}a_i x_0) &= ((g_{i-1}a_i) v_i x_0, g_{i-1}a_i x_0) \in M_U,\text{ and} \\
    (g_{i-1}a_i x_0, g_{i-1} x_0) &\in M_W.
\end{align*}
These containments imply that $(g_i x_0, g_{i-1} x_0) \in M_U \circ M_W$.

By finite induction over $1 \le i \le m$, we can see that $(kx_0, x_0) = (g_m x_0, g_0 x_0) \in (M_U \circ M_W)^m$. Therefore, $Kx_0 \subseteq (M_U \circ M_W)^m[x_0]$, proving it is coarsely bounded.

\noindent \textbf{Case (b): $(X, d_X)$ is a pseudometric space.}
We construct a displacement function $D_{x_0} \colon G \times G \to [0, \infty)$ defined by the supremum of orbital shifts across the space: $D_{x_0}(g, h) \coloneqq \sup_{a \in G} d_X(agx_0, ahx_0)$.

To see that $D_{x_0}(g,h) \in [0,\infty)$, notice that the base pair $(gx_0, hx_0)$ belongs to some symmetric entourage $E \in \mathcal{E}_X$. Because the action operates by uniform coarse equivalences, the translated family $aE$ is uniformly bounded by a master entourage $M_E \in \mathcal{E}_X$. We can find $r>0$ such that $M_E \subseteq E_r$. That is, for all $a \in G$, $(agx_0, ahx_0) \in M_E \implies d_X(agx_0, ahx_0) \le r$. Therefore, $D_{x_0}(g,h) \le r < \infty$.

The function $D_{x_0}$ inherits symmetry and the triangle inequality directly from $d_X$. It is left-invariant because, for all $c \in G$: 
\[ D_{x_0}(cg, ch) = \sup_{a \in G} d_X(acgx_0, achx_0) = D_{x_0}(g, h) \] 
Because the action is continuous and the topology of $X$ is induced by $d_X$, $D_{x_0}$ constitutes a valid continuous left-invariant pseudometric on $G$.

By Lemma \ref{lem:pseudometric_boundedness}, the bounded set $K \subseteq G$ must have finite diameter under $D_{x_0}$. Thus, there is a radius $R < \infty$ such that $D_{x_0}(1_G, k) \le R$ for all $k \in K$. Evaluating the supremum at $a = 1_G$ yields $d_X(x_0, kx_0) \le R$ for all $k \in K$. The orbit $Kx_0$ is thus contained within a pseudometric ball of radius $R$, which is coarsely bounded.
\end{proof}

\begin{proposition} \label{prop:bornologous_orbits}
Under the hypotheses of Lemma \ref{lem:bounded_orbits_unified}, for any $x \in X$, the orbit map $\rho_x \colon G \to X$ is a bornologous map. In particular, if $\rho_x$ is a coarsely proper map, then it is coarse and expanding. 
\end{proposition}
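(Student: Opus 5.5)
To prove that $\rho_x$ is bornologous, I would fix an arbitrary entourage of $G$ and show that its image under $\rho_x \times \rho_x$ is an entourage of $X$. By Definition \ref{def:rosendal_pseudometric}, $\mathcal{E}_\mathcal{P}$ has a base of entourages $E_K = \{(g,h) \in G \times G \mid g^{-1}h \in K\}$ with $K \in \mathcal{B}_\mathcal{P}$. It therefore suffices to treat $E_K$ for a coarsely bounded $K$. Replacing $K$ by $K \cup K^{-1}$, which is allowed since $\mathcal{B}_\mathcal{P}$ is a group bornology, I may assume $K$ is symmetric.

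\textbf{Step 1 (boundedness of the orbit of $K$).} By Lemma \ref{lem:bounded_orbits_unified}, under either hypothesis (a) or (b), the set $Kx$ is coarsely bounded in $X$. Hence $F \coloneqq Kx \times \{x\} \subseteq (Kx \cup \{x\}) \times (Kx \cup \{x\})$ is an entourage. This uses coarse connectedness: $Kx \cup \{x\}$ is bounded because bounded sets form a bornology. Concretely, $F = \{(kx, x) \mid k \in K\} \in \mathcal{E}_X$.

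\textbf{Step 2 (transport by uniformity).} Since the action is by uniform coarse equivalences, there is a master entourage $M_F$ with $\bigcup_{g \in G} gF \subseteq M_F$. Now take $(g,h) \in E_K$ and write $h = gk$ with $k \in K$. Then
\[
(\rho_x(h), \rho_x(g)) = (gkx, gx) = g\cdot(kx, x) \in gF \subseteq M_F .
\]
Thus $(\rho_x \times \rho_x)(E_K) \subseteq M_F^{-1} \in \mathcal{E}_X$, and $\rho_x$ is bornologous. For a general entourage $E \subseteq E_K$, the image is a subset of this, so it is also an entourage.

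\textbf{Step 3 (the ``in particular'').} If $\rho_x$ is moreover coarsely proper, then it is coarse by definition, being bornologous and coarsely proper. It is expanding by Lemma \ref{Lem:Expan_orbit}, which only requires coarse properness and the standing uniform-action convention.

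The only real content is Step 1. Example \ref{ex:non_bornologous_orbit} shows that without the Polish or pseudometric hypothesis, bounded sets can have unbounded orbits. Since Lemma \ref{lem:bounded_orbits_unified} is already available, I expect no serious obstacle. The one point to check is the convention for $E_K$ versus $E_B[x] = xB$, to make sure the orientation $g^{-1}h \in K$ is used consistently. Symmetrising $K$ removes any ambiguity.
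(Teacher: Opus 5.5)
Your proposal is correct and follows the paper's proof essentially step for step. You reduce to basic entourages $E_K$ with $K$ bounded, use Lemma \ref{lem:bounded_orbits_unified} to make $\{(kx,x) \mid k \in K\}$ an entourage, transport it by a master entourage, and cite Lemma \ref{Lem:Expan_orbit} for expansion. The symmetrisation of $K$ and the reversed orientation of the pairs (which gives $M_F^{-1}$ where the paper gets $M_U$) are only cosmetic differences.
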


\begin{proof}
Fix $x_0 \in X$. To show $\rho_{x_0}$ is bornologous, let $E \in \mathcal{E}_G$. By the definition of the coarse structure on $G$, there exists a coarsely bounded subset $K \subseteq G$ such that $E \subseteq E_K = \{(g,h) \in G \times G \mid g^{-1}h \in K\}$.

By Lemma \ref{lem:bounded_orbits_unified}, the orbit set $Kx_0 = \{kx_0 \mid k \in K\}$ is a coarsely bounded subset of $X$. Consequently, the subset $U \coloneqq \{(x_0, kx_0) \mid k \in K\} \subseteq \{x_0\} \times Kx_0$ is an entourage in $\mathcal{E}_X$. Because the action operates by uniform coarse equivalences, the family of translated entourages $gU = \{(gx_0, gkx_0) \mid k \in K\}$ is uniformly controlled by a master entourage $M_U \in \mathcal{E}_X$ such that $\bigcup_{g \in G} gU \subseteq M_U$.

Evaluating the image of $E$ under $\rho_{x_0} \times \rho_{x_0}$ yields:
\[ (\rho_{x_0} \times \rho_{x_0})(E) = \{(gx_0, hx_0) \mid (g,h) \in E\}. \]
We claim that $(\rho_{x_0} \times \rho_{x_0})(E) \subseteq M_U$. Indeed, for any $(g,h) \in E$, let $k = g^{-1}h \in K$. Then $(x_0, g^{-1}hx_0) \in U$, and hence $(gx_0, hx_0) \in gU \subseteq M_U$. Thus, the orbit map $\rho_{x_0}$ is bornologous.

Finally, if $\rho_{x_0}$ is additionally coarsely proper, its bornologous nature makes it a coarse map by definition, and it follows from Lemma \ref{Lem:Expan_orbit} that it is expanding.
\end{proof}

\begin{theorem} \label{thm:joint_boundedness}
Under the hypotheses of Lemma \ref{lem:bounded_orbits_unified}, if $K \subseteq G$ is a coarsely bounded subset of the group and $B \subseteq X$ is a coarsely bounded subset of the space, then their joint image $K B$ is a coarsely bounded subset of $X$.
\end{theorem}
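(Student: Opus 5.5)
Fix a basepoint $x_0 \in B$; if $B$ is empty there is nothing to prove. The plan is to control every point $kb$ with $k \in K$ and $b \in B$ by a chain through $kx_0$ and then back to $x_0$. Each link of the chain will be handled by either the boundedness of $B$, the uniformity of the action, or Lemma \ref{lem:bounded_orbits_unified}.

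First, $B$ is coarsely bounded, so $E \coloneqq B \times B \in \mathcal{E}_X$. Since the action is by uniform coarse equivalences, there is a master entourage $M_E \in \mathcal{E}_X$ with $\bigcup_{g \in G} gE \subseteq M_E$. For $k \in K$ and $b \in B$ we have $(b, x_0) \in E$. Translating by $k$ gives $(kb, kx_0) \in kE \subseteq M_E$.

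Second, Lemma \ref{lem:bounded_orbits_unified} applies to $K$ at the point $x_0$, under either hypothesis (a) or (b). Hence $Kx_0$ is coarsely bounded. Because $x_0 \in Kx_0 \cup \{x_0\}$ and $X$ is coarsely connected, we get $F \coloneqq (Kx_0 \cup \{x_0\}) \times (Kx_0 \cup \{x_0\}) \in \mathcal{E}_X$. In particular $(kx_0, x_0) \in F$.

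Composing the two steps, $(kb, x_0) \in M_E \circ F$ for all $k \in K$ and $b \in B$. Therefore $KB \subseteq (M_E \circ F)[x_0]$. Finally, $(M_E \circ F)[x_0] \times (M_E \circ F)[x_0] \subseteq (M_E \circ F) \circ (M_E \circ F)^{-1} \in \mathcal{E}_X$, so $KB$ is coarsely bounded.

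Most of the work sits in Lemma \ref{lem:bounded_orbits_unified}. The only point needing care is that uniformity of the action is applied to the single entourage $B \times B$, so that one master entourage works for all $k$ simultaneously; no continuity is needed beyond what the lemma uses.
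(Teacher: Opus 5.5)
Your proposal is correct and follows essentially the same argument as the paper: fix $x_0 \in B$, apply uniformity of the action to the single entourage $B \times B$ to get $(kb, kx_0) \in M_E$, and invoke Lemma \ref{lem:bounded_orbits_unified} for the boundedness of $Kx_0$. The only cosmetic difference is that the paper concludes directly from $KB \subseteq M_E[Kx_0]$. You instead route each point back to $x_0$ through the entourage $(Kx_0 \cup \{x_0\})^2$, which uses the standing coarse-connectedness convention.
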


\begin{proof}
Fix a basepoint $x_0 \in B$. By Lemma \ref{lem:bounded_orbits_unified}, the orbit set $K x_0$ is a coarsely bounded subset of $X$. 

Because $B$ is coarsely bounded, there exists an entourage $E \in \mathcal{E}_X$ such that $B \times B \subseteq E$. Since $x_0 \in B$, this implies $(b, x_0) \in E$ for all $b \in B$. The action operates by uniform coarse equivalences, guaranteeing the existence of a master entourage $M_E \in \mathcal{E}_X$ such that $(gb, g x_0) \in M_E$, and therefore $gb \in M_E[g x_0]$, for all $g \in G$ and $b \in B$. 

Taking the union over all elements $g \in K$ and $b \in B$ gives the inclusion:
\[ K B \subseteq M_E[K x_0]. \]
Because the base orbit $K x_0$ is bounded, the entourage neighbourhood $M_E[K x_0]$ is a coarsely bounded subset of $X$. Thus, the joint image $K B$ is coarsely bounded.
\end{proof}

\subsection*{Properness of Orbit Maps}

The primary objective of this framework is to characterise coarsely proper group actions on coarse spaces. As demonstrated in the subsequent section, the coarse properness of the global action is entirely governed by the coarse properness of its induced orbit maps. 

In general, a coarsely proper map is not automatically coarsely closed; it can easily fail by merging distant, disjoint regions of the space at infinity, as demonstrated in Example \ref{ex:proper_not_closed}. 

However, the algebraic rigidity of group actions ensures that orbit maps behave much better than arbitrary coarse maps. In the specific case of orbit maps, coarse properness intrinsically forces coarse closedness.

\begin{proposition} \label{prop:coarse_Bourbaki}
Let $G \curvearrowright X$ be a continuous action of a topological group $G$ on a topological coarse space $X$. Suppose either:
\begin{enumerate}
    \item[(a)] $G$ is a Polish group, or
    \item[(b)] $(X, d_X)$ is a pseudometric space.
\end{enumerate}
For any $x \in X$, if the orbit map $\rho_x \colon G \to X$ is coarsely proper, then it is coarsely closed.
\end{proposition}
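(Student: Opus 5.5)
The plan is to verify the definition of coarse closedness directly, using the coarse properness of $\rho_x$ to pull the situation back to $G$, and Lemma \ref{lem:bounded_orbits_unified} to push the resulting bounded set forward again. Fix $x \in X$ and let $A, C \subseteq G$ be coarsely disjoint with $A$ $\rho_x$-saturated. We must show that for every entourage $M \in \mathcal{E}_X$ the set $M[Ax] \cap Cx$ is coarsely bounded in $X$. Without loss of generality $M$ is symmetric and contains $\Delta_X$. Since the action is by uniform coarse equivalences, we choose a symmetric master entourage $M' \in \mathcal{E}_X$ with $\bigcup_{g \in G} gM \subseteq M'$.

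\textbf{Pulling back to $G$.} Set $K \coloneqq \rho_x^{-1}(M'[x])$. This is bounded in $G$, because $M'[x]$ is bounded and $\rho_x$ is coarsely proper. Now let $y \in M[Ax] \cap Cx$, and write $y = cx$ with $c \in C$ and $(cx, ax) \in M$ for some $a \in A$. Translating by $a^{-1}$ gives $(a^{-1}cx, x) \in a^{-1}M \subseteq M'$, so $a^{-1}c \in K$, that is, $c \in aK$. Define
\[ C' \coloneqq \{c \in C \mid cx \in M[Ax]\}. \]
Then $C' \subseteq AK \cap C$, and $AK = E_{K^{\pm1}}[A]$ for the basic left-invariant entourage generated by the bounded set $K$ or $K^{-1}$. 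Which one appears depends on the orientation convention for $E_B[\cdot]$; since $\mathcal{B}_{\mathcal{P}}$ is closed under inversion, either choice is admissible. Because $A$ and $C$ are coarsely disjoint in $G$, $AK \cap C$ is bounded, and therefore so is $C'$.

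\textbf{Pushing forward.} By construction $M[Ax] \cap Cx = \rho_x(C')$. By Lemma \ref{lem:bounded_orbits_unified}, which is where hypothesis (a) or (b) enters, the orbit of the bounded set $C'$ is bounded in $X$. This gives the claim. Note that the saturation of $A$ is not actually needed in this argument; it is harmless, since the argument gives the stronger statement without it.

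\textbf{Main obstacle.} The only delicate point is bookkeeping. One must match the convention $E_B[x] = xB$ from the examples with the convention $E_B = \{(x,y) \mid x^{-1}y \in B\}$ used in Definition \ref{def:rosendal_pseudometric}, so that $AK$ is genuinely of the form $N[A]$ for an entourage $N \in \mathcal{E}_G$. I will settle this first by replacing $K$ with the symmetric bounded set $K \cup K^{-1}$, so that both orientations give a superset of $AK$.
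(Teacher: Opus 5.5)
Your proof is correct and is essentially the paper's argument. The paper cites Lemma~\ref{Lem:Expan_orbit} to get $E'=(\rho_x\times\rho_x)^{-1}(E)\in\mathcal{E}_G$, uses coarse disjointness to bound $E'[A]\cap C$, and pushes forward via Lemma~\ref{lem:bounded_orbits_unified}; you instead inline the expanding step, placing the relevant $c$ in $AK\cap C$ with $K=\rho_x^{-1}(M'[x])$, and, as in the paper, you never use the saturation of $A$.
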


\begin{proof}
We prove the stronger statement that the orbit map preserves coarse disjointness. Let $A, C \subseteq G$ be arbitrary coarsely disjoint subsets. We must show their images $\rho_x(A)$ and $\rho_x(C)$ are coarsely disjoint in $X$. 

Let $E \in \mathcal{E}_X$; we must show that $E[\rho_x(A)] \cap \rho_x(C)$ is a coarsely bounded subset of $X$. By Lemma \ref{Lem:Expan_orbit}, $\rho_x$ is expanding, and therefore $E' \coloneqq (\rho_x \times \rho_x)^{-1}(E) \in \mathcal{E}_G$. Since $A$ and $C$ are coarsely disjoint, the set $E'[A] \cap C$ is coarsely bounded in $G$. By Lemma \ref{lem:bounded_orbits_unified}, the image $\rho_x(E'[A] \cap C)$ is coarsely bounded in $X$. 

We claim that $E[\rho_x(A)] \cap \rho_x(C) \subseteq \rho_x(E'[A] \cap C)$. 

Let $z \in E[\rho_x(A)] \cap \rho_x(C)$. Since $z \in \rho_x(C)$, there exists $c \in C$ such that $\rho_x(c) = z$. Since $z \in E[\rho_x(A)]$, there exists some $a \in A$ such that $(\rho_x(c), \rho_x(a)) \in E$. The relation $(\rho_x(c), \rho_x(a)) \in E$ dictates that $(c, a) \in E'$. Under our entourage notation, this yields $c \in E'[a]$. It follows that $c \in E'[A]$.

Since $c \in C$ and $c \in E'[A]$, we have $c \in E'[A] \cap C$. Consequently, $z = \rho_x(c) \in \rho_x(E'[A] \cap C)$, which proves the claim. 

Because $E[\rho_x(A)] \cap \rho_x(C)$ is contained within the bounded set $\rho_x(E'[A] \cap C)$, it is coarsely bounded. Hence, $\rho_x(A)$ and $\rho_x(C)$ are coarsely disjoint, establishing that $\rho_x$ is coarsely closed.
\end{proof}

\section{Coarse Proper Actions}

We adapt Kapovich's \cite{Kapovich} topological equivalences for proper discontinuity to the coarse setting. Divergence to infinity is denoted $g_n \to_{\tau_{OB}} \infty$, meaning the sequence eventually leaves every coarsely bounded set in $G$. All groups in this section are locally bounded topological groups, meaning each group admits a coarsely bounded neighbourhood of the identity, serving as the coarse analogue of local compactness.

\begin{definition} Let $G \curvearrowright X$ be an action of a coarse group $G$ on a coarse space $X$: 
\begin{itemize}
    \item The transporter subset $(A \mid B)_G$ is defined as $(A \mid B)_G \coloneqq \{g \in G \mid g A \cap B \neq \emptyset\}.$
    \item Two coarsely bounded sets $A, B \subseteq X$ are \emph{coarsely dynamically related} if there exists a sequence $g_n \in G$ such that $g_n \to_{\tau_{OB}} \infty$ and $g_n A \cap B \neq \emptyset$ for all $n$; that is $\{g_n\}_{n\in\mathbb{N}} \subseteq (A \mid B)_G$.
    \item A coarsely bounded subset $B \subseteq X$ is \emph{coarsely wandering} if its transporter $(B \mid B)_G \coloneqq \{g \in G \mid gB \cap B \neq \emptyset\}$ is a coarsely bounded subset of $G$.
    \item The action is \emph{coarsely proper} if every coarsely bounded subset $B \subseteq X$ is coarsely wandering.
\end{itemize}
\end{definition}

\begin{theorem}[Characterisations of Coarse Proper Actions] \label{thm:equivalences}
Let $G \curvearrowright X$ be a continuous action of a topological group $G$ on a coarse space $X$. Suppose either:
\begin{enumerate}
    \item[(a)] $G$ is a Polish group, or
    \item[(b)] $(X, d_X)$ is a pseudometric space.
\end{enumerate}
The following conditions are equivalent:
\begin{enumerate}
    \item The action $G \curvearrowright X$ is coarsely proper.
    \item For every $x \in X$, the orbit map $\rho_x \colon G \to X$ is a coarsely proper map.
    \item For every coarsely bounded subset $B \subseteq X$ and every entourage $E \in \mathcal{E}_X$, the transporter of the entourage neighbourhood $(E[B] \mid E[B])_G$ is coarsely bounded in $G$.
\end{enumerate}
Furthermore, if $G$ is locally bounded, these conditions are equivalent to:
\begin{enumerate}
    \item[(4)] There are no coarsely bounded subsets $A, B \subseteq X$ that are coarsely dynamically related.
\end{enumerate}
\end{theorem}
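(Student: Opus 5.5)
I will prove the cycle $(1)\Rightarrow(2)\Rightarrow(3)\Rightarrow(1)$, and then, assuming $G$ is locally bounded, the equivalence $(1)\Leftrightarrow(4)$.

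\textbf{Step $(1)\Rightarrow(2)$.} Fix $x\in X$ and a coarsely bounded $B\subseteq X$. Put $B'\coloneqq B\cup\{x\}$, which is bounded since $X$ is coarsely connected. If $g\in\rho_x^{-1}(B)$, then $gx\in B\cap gB'$, so $g\in(B'\mid B')_G$. Hence $\rho_x^{-1}(B)\subseteq(B'\mid B')_G$, and the right-hand side is bounded by (1).

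\textbf{Step $(2)\Rightarrow(3)$.} Here the Polish/pseudometric hypothesis enters, through Theorem \ref{thm:joint_boundedness} and Lemma \ref{lem:bounded_orbits_unified}.
\begin{enumerate}
\item Since $E[B]$ is bounded whenever $B$ is bounded, it suffices to show that $(C\mid C)_G$ is bounded for every bounded $C$.
\item Fix $x_0\in C$ and choose $F\in\mathcal{E}_X$ with $C\times C\subseteq F$. Let $M_F$ be a symmetric master entourage for $F$.
\item If $gc\in C$ for some $c\in C$, then $(gx_0,gc)\in M_F$, so $gx_0\in M_F[C]$.
\item Therefore $(C\mid C)_G\subseteq\rho_{x_0}^{-1}(M_F[C])$. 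The set $M_F[C]$ is bounded, so this preimage is bounded by (2).
\end{enumerate}

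\textbf{Step $(3)\Rightarrow(1)$.} Take $E=\Delta$ in (3).

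\textbf{Step $(1)\Rightarrow(4)$.} Suppose $A,B$ are bounded and $g_n\in(A\mid B)_G$ with $g_n\to\infty$. Then every $g_n$ lies in $(A\cup B\mid A\cup B)_G$, which is bounded by (1). This contradicts divergence, since a divergent sequence eventually leaves this bounded set.

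\textbf{Step $(4)\Rightarrow(1)$.} Argue by contraposition: suppose some bounded $B$ has an unbounded transporter $T=(B\mid B)_G$. The plan is to extract a sequence $g_n\in T$ with $g_n\to_{\tau_{OB}}\infty$, which shows that $B$ is dynamically related to itself. This is the step I expect to be the main obstacle, because "unbounded" does not automatically yield a sequence escaping every bounded set when the bornology is not countably generated.

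To overcome this, I would use local boundedness. Take a bounded identity neighbourhood $V$.
\begin{enumerate}
\item In the Polish case, $G$ is covered by countably many translates $c_iV$. Then the sets $B_n\coloneqq(\{c_1,\dots,c_n\}V)^n$ are bounded and increasing.
\item Lemma \ref{lem:algebraic_boundedness}, applied with $V$ (and shrinking $V$ where needed), shows that every bounded set lies in some $B_n$. So the bornology has the countable cofinal family $(B_n)$.
\item Because $T$ is unbounded, it is contained in no $B_n$. Choose $g_n\in T\setminus B_n$. Then $g_n\to_{\tau_{OB}}\infty$.
\end{enumerate}
In the pseudometric case (b) I would try the same construction. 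Again $V$ is bounded and $G=\bigcup_n (F_nV)^n$ for suitable finite sets $F_n$, provided $G$ is countably generated over $V$.

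If countability fails, I would reinterpret $g_n\to\infty$ as a net, or note that the paper's standing usage makes "leaves every bounded set" meaningful via such an exhaustion. I would flag this as the delicate point of the proof.
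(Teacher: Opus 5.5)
\textbf{Gap in $(4)\Rightarrow(1)$ under hypothesis (b).} Under (b), $G$ is an arbitrary locally bounded topological group. Such a group need not be countably generated over any bounded identity neighbourhood; a free group of uncountable rank with the discrete topology is an example. So your exhaustion $G=\bigcup_n (F_nV)^n$ is not available there, and replacing sequences by nets changes condition (4) rather than proving it.

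You have also misdiagnosed the obstacle: no countable cofinal family and no local boundedness are needed. The reason is that $\mathcal{B}_{\mathcal{P}}$ is the intersection of the countably based bornologies of the individual $d\in\mathcal{P}$.
\begin{enumerate}
\item If $T=(B\mid B)_G$ is unbounded, Lemma \ref{lem:pseudometric_boundedness} gives a single $d\in\mathcal{P}$ with $\sup_{t\in T}d(1_G,t)=\infty$.
\item Choose $g_n\in T$ with $d(1_G,g_n)>n$.
\item Every bounded $K\subseteq G$ satisfies $R_K\coloneqq\sup_{k\in K}d(1_G,k)<\infty$. Hence $g_n\notin K$ for $n>R_K$, that is, $g_n\to_{\tau_{OB}}\infty$.
\end{enumerate}
This is exactly what the paper's one-line appeal to ``the definition of the coarse bornology'' amounts to. It handles (a) and (b) uniformly and uses neither Polishness nor local boundedness. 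Your Polish-case exhaustion can be made to work, but it is unnecessary. You would use translates $c_iW$ of some $W$ with $W^2\subseteq V$, since Lemma \ref{lem:algebraic_boundedness} applied to $V$ itself only yields $(FV)^k\subseteq(\{c_1,\dots,c_n\}V^2)^k$. Alternatively, cite Lemma \ref{lem:coarse_metrizability_polish}.

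\textbf{The remaining steps.} These are correct, and your $(2)\Rightarrow(3)$ improves on the paper. The paper proves $(1)\Rightarrow(2)$ and $(1)\Leftrightarrow(3)$, but returns from (2) only via $(2)\Rightarrow(4)\Rightarrow(1)$ under local boundedness. So it never establishes $(2)\Rightarrow(1)$ in general. Your inclusion $(C\mid C)_G\subseteq\rho_{x_0}^{-1}(M_F[C])$ supplies this directly.

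Contrary to your remark, this step uses only the uniform-coarse-equivalence property. It does not use hypotheses (a)/(b), Theorem \ref{thm:joint_boundedness}, or Lemma \ref{lem:bounded_orbits_unified}. Your $(1)\Rightarrow(4)$, via the bounded transporter $(A\cup B\mid A\cup B)_G$, is a valid and slightly more direct alternative to the paper's $(2)\Rightarrow(4)$.
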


\begin{proof}
$(1) \implies (2)$: Let $B \subseteq X$ be coarsely bounded. We examine the preimage $\rho_x^{-1}(B) = \{g \in G \mid gx \in B\}$. Let $B' = B \cup \{x\}$, which is a coarsely bounded subset of $X$. We observe that $g \in \rho_x^{-1}(B)$ implies $gx \in B \subseteq B'$ which implies $gB' \cap B' \neq \emptyset$. Thus, $\rho_x^{-1}(B) \subseteq (B' \mid B')_G$. By (1), the transporter $(B' \mid B')_G$ is coarsely bounded in $G$, hence the preimage $\rho_x^{-1}(B)$ is coarsely bounded. 

$(1) \implies (3)$: This implication is immediate.

$(3) \implies (1)$: Setting $E = \Delta_X$ (the diagonal entourage), we have $E[B] = B$. Thus, $(B \mid B)_G \subseteq (E[B] \mid E[B])_G$, making $(B \mid B)_G$ coarsely bounded.

Now, assuming further that $G$ is locally bounded, we establish the equivalence with condition (4):

$(2) \implies (4)$: Suppose there exist coarsely bounded subsets $A, B \subseteq X$ that are coarsely dynamically related. There exists a sequence $g_n \to_{\tau_{OB}} \infty$ such that $g_n A \cap B \neq \emptyset$. Thus, there exist elements $a_n \in A$ such that $g_n a_n \in B$ for all $n$. 

Fix a basepoint $x \in A$. Since $A \times A \in \mathcal{E}_X$ and $(x, a_n) \in A \times A$ for all $n$, there exists an entourage $M_A \in \mathcal{E}_X$ such that $(g_n x, g_n a_n) \in M_A$ for all $n$. 

Because $B$ is bounded, $E_B = B \times B \in \mathcal{E}_X$. Taking a fixed basepoint $y \in B$, the fact that $g_n a_n \in B \subseteq E_B[y]$ implies $(g_n a_n, y) \in E_B$. 

Hence $(g_n x, y) \in M_A \circ E_B$, for all $n$. This implies that the sequence $g_n x \in (M_A \circ E_B)[y]$. Thus, the orbit sequence $\{g_n x\}$ is coarsely bounded in $X$. Because $\rho_x$ is coarsely proper by (2), this implies the sequence $g_n$ is contained in a coarsely bounded subset of $G$, contradicting $g_n \to_{\tau_{OB}} \infty$.

$(4) \implies (1)$: We proceed by contrapositive. Suppose the action is not coarsely proper. Then there exists a coarsely bounded set $B \subseteq X$ such that the transporter $(B \mid B)_G$ is coarsely unbounded in $G$. By the definition of the coarse bornology on $G$, there exists a sequence $g_n \in (B \mid B)_G$ such that $g_n \to_{\tau_{OB}} \infty$. Because $g_n \in (B \mid B)_G$, it follows that $g_n B \cap B \neq \emptyset$ for all $n$. Setting $A = B$, the sets $B$ and $B$ are coarsely dynamically related, contradicting (4).
\end{proof}

\section{Coarse Coboundedness and the \v{S}varc-Milnor Lemma}

Having characterised coarsely proper actions, we now formulate the large-scale equivalent of a group translating a bounded set to cover an entire space.

\begin{definition}
An action $G \curvearrowright X$ is \emph{coarsely cobounded} if there exists a coarsely bounded set $B \subseteq X$ such that $GB = X$.
\end{definition}

We now establish the structural equivalences for coarse coboundedness.

\begin{theorem}[Characterisations of Coarse Coboundedness] \label{thm:cobounded_equivalences}
Let $G \curvearrowright X$ be an action of a group $G$ on a coarse space $X$. The following statements are equivalent:
\begin{enumerate}
    \item The action is coarsely cobounded.
    \item For every $x \in X$, the orbit map $\rho_x \colon G \to X$ is coarsely surjective.
    \item The quotient space $X/G$, equipped with the quotient coarse structure, is coarsely bounded.
\end{enumerate}
\end{theorem}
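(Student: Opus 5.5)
We prove the cycle $(1)\Rightarrow(2)\Rightarrow(3)\Rightarrow(1)$, using only the standing conventions (coarse connectedness of $X$, action by uniform coarse equivalences) and the description of bounded sets in $X/G$ recalled in the quotient example together with Proposition \ref{prop:quotient_structure}. No topology on $G$ is needed. Throughout, a coarsely surjective orbit map means there is $F\in\mathcal{E}_X$ with $F[Gx]=X$.

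For $(1)\Rightarrow(2)$, let $B$ be bounded with $GB=X$ and fix $x\in X$. By coarse connectedness, $B'\coloneqq B\cup\{x\}$ is bounded, so $E\coloneqq B'\times B'\in\mathcal{E}_X$. Take a symmetric master entourage $M_E$ with $\bigcup_g gE\subseteq M_E$. Any $z\in X$ can be written $z=gb$ with $b\in B$. Since $(b,x)\in E$, we get $(gb,gx)\in M_E$, so $z\in M_E[Gx]$. Hence $M_E[\rho_x(G)]=X$. This is the one step that genuinely uses uniformity: it turns the single bounded set $B$ into a single entourage that works for every translate at once.

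For $(2)\Rightarrow(3)$, fix $x$ and $F$ with $F[Gx]=X$. For any $z\in X$ there is $g$ with $(z,gx)\in F$, so $([z],[x])\in(\pi\times\pi)(F)$. Thus $X/G=\tilde F[[x]]$, and therefore
\[
X/G\times X/G\subseteq \tilde F\circ\tilde F^{-1}.
\]
This set is an entourage of the quotient structure, so $X/G$ is bounded. Alternatively, one can note that $F[x]$ is bounded and $\pi(F^{-1}[x])=X/G$ after symmetrising $F$.

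For $(3)\Rightarrow(1)$, the quotient example says that a bounded $\tilde B\subseteq X/G$ is contained in $\pi(B)$ for some bounded $B\subseteq X$. Applying this to $\tilde B=X/G$ gives $\pi(B)=X/G$, that is, $GB=X$.

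The main obstacle is justifying this characterisation of bounded quotient sets, which the paper states but which is where the quotient structure matters. If it must be verified, use Proposition \ref{prop:quotient_structure}: boundedness of $X/G$ gives a single $E\in\mathcal{E}_X$, taken symmetric and containing $\Delta_X$, with $X/G\times X/G\subseteq(\pi\times\pi)(E)$. Fix $x_0$. For each $z$ there are $g,h\in G$ with $(gz,hx_0)\in E$. Then $(h^{-1}gz,x_0)\in h^{-1}E\subseteq M_E$, so $z\in G\,M_E[x_0]$. Since $M_E[x_0]$ is bounded, we may take $B\coloneqq M_E[x_0]$, and again uniformity of the action is what makes this work.
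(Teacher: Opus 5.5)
Your proof is correct, but it is organised differently from the paper's. The paper proves two biconditionals, $(1)\Leftrightarrow(2)$ and $(1)\Leftrightarrow(3)$:
\begin{itemize}
\item its $(2)\Rightarrow(1)$ translates $(x,gx_0)\in E$ by $g^{-1}$ to get $X=G\,M_E[x_0]$;
\item its $(1)\Rightarrow(3)$ uses that $\pi$ is bornologous;
\item its $(3)\Rightarrow(1)$ simply invokes the description of bounded subsets of $X/G$ asserted, without proof, in the quotient example of the preliminaries.
\end{itemize}
Your cycle replaces the first two of these by a direct $(2)\Rightarrow(3)$ that needs no uniformity at all, namely $X/G\times X/G\subseteq\tilde F\circ\tilde F^{-1}$. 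It then moves the $g^{-1}$-translation trick into $(3)\Rightarrow(1)$. There, combined with the basis property of Proposition \ref{prop:quotient_structure}, it actually proves the bounded-set characterisation that the paper takes for granted.

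This is a genuine gain in rigour. The ``only if'' half of that characterisation is exactly where uniformity of the action is needed, and the paper's preliminary example states it without that hypothesis. Your version makes the dependence explicit and locates precisely which implications use uniformity, namely $(1)\Rightarrow(2)$ and $(3)\Rightarrow(1)$. The paper's arrangement, in exchange, gives each pair of conditions its own direct two-way argument.

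One small correction: the ``alternatively'' remark in your $(2)\Rightarrow(3)$ is false as written. From $(z,gx)\in F$ you cannot conclude that the orbit of $z$ meets $F[x]$ (or $F^{-1}[x]$) unless $F$ is $G$-invariant. You must pass to the master entourage and use $M_F[x]$ instead, which is precisely the paper's $(2)\Rightarrow(1)$ argument. Your main argument does not depend on this remark, so simply delete or repair it.
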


\begin{proof}
$(1) \implies (2)$: Let $x_0 \in X$ and suppose the action is coarsely cobounded. There exists a coarsely bounded subset $B \subseteq X$ such that $GB = X$. Without loss of generality, assume that $x_0 \in B$. Consider the entourage $E \coloneqq B \times B \in \mathcal{E}_X$. Observe that $B = E[x_0]$. Thus, $X = GB = \bigcup_{g \in G} g E[x_0]$.

By uniform coarse equivalences, there exists a master entourage $M_E \in \mathcal{E}_X$ such that if $(u, v) \in E$, then $(gu, gv) \in M_E$. In particular, for any $b \in B = E[x_0]$, we have $(b, x_0) \in E$ and hence $(gb, gx_0) \in M_E$, which means $gb \in M_E[gx_0]$. 

Consequently, $X = GB \subseteq \bigcup_{g \in G} M_E[gx_0] = M_E[Gx_0]$. This demonstrates that the orbit $Gx_0$ is $M_E$-coarsely dense in $X$. Since the image of the orbit map $\rho_{x_0}$ is coarsely dense, $\rho_{x_0}$ is coarsely surjective.

$(2) \implies (1)$: Suppose $\rho_{x_0}$ is coarsely surjective. Its image $Gx_0$ is coarsely dense in $X$, and thus there exists an entourage $E \in \mathcal{E}_X$ such that $X = E[Gx_0]$.

For an arbitrary point $x \in X = E[Gx_0]$, there exists $g \in G$ such that $x \in E[gx_0]$, which translates to $(x, gx_0) \in E$. Choose $M_E \in \mathcal{E}_X$ such that $\bigcup_{g \in G} gE \subseteq M_E$. The set $M_E[x_0]$ is coarsely bounded in $X$. Since $(x, gx_0) \in E \subseteq M_E$, applying the uniform action of $g^{-1}$ yields $(g^{-1}x, x_0) \in g^{-1}E \subseteq M_E$. This implies $g^{-1}x \in M_E[x_0]$, and therefore $x \in gM_E[x_0]$. This shows that $X = G M_E[x_0]$, proving the action is coarsely cobounded. 

$(1) \implies (3)$: Suppose the action is coarsely cobounded, meaning $GB = X$ for some coarsely bounded subset $B \subseteq X$. Consider the canonical projection map $\pi \colon X \to X/G$. By the definition of the quotient coarse structure, $\pi$ is a bornologous map. Since $B$ is coarsely bounded in $X$, its image $\pi(B)$ must be a coarsely bounded set in the quotient space $X/G$. 

Because the orbit of $B$ covers the space ($GB = X$), every orbit in $X$ intersects the bounded set $B$. This implies that the projection of the bounded set $B$ covers the entire quotient space, yielding $\pi(B) = X/G$. Therefore, the entire coarse space $X/G$ is coarsely bounded. 

$(3) \implies (1)$: Suppose $X/G$ is coarsely bounded. By the definition of the quotient coarse bornology, any coarsely bounded subset of $X/G$ must be contained in the image $\pi(B)$ of some coarsely bounded subset $B \subseteq X$. Since the entire space $X/G$ is bounded, we can find a bounded subset $B \subseteq X$ such that $X/G = \pi(B)$. This implies that for every $x \in X$, its orbit $[x] \in X/G$ belongs to $\pi(B)$. Therefore, there exists some $b \in B$ such that $[x] = [b]$, meaning $x = gb$ for some $g \in G$. Consequently, $X = GB$. Thus, the action is coarsely cobounded.
\end{proof}

\subsection*{\v{S}varc-Milnor Lemma}

The equivalences established in Theorem \ref{thm:equivalences} and Theorem \ref{thm:cobounded_equivalences} provide the necessary large-scale geometric conditions required for the \v{S}varc-Milnor Lemma. While classical versions of this lemma require proper discontinuity and local compactness, and recent generalisations address abstract large-scale groups \cite{MaRashedDydak}, we now formulate the result for topological groups. Using the bornological framework of Rosendal \cite{Rosendal}, we obtain the following coarse \v{S}varc-Milnor Lemma for continuous group actions.

\begin{theorem}[Coarse \v{S}varc-Milnor Lemma] \label{thm:svarc_milnor}
Let $G$ be a topological group acting continuously on a coarse space $X$. Suppose either:
\begin{enumerate}
    \item[(a)] $G$ is a Polish group, or
    \item[(b)] $(X, d_X)$ is a pseudometric space.
\end{enumerate}
If the action is coarsely proper and coarsely cobounded, then for any $x \in X$, the orbit map $\rho_x \colon G \to X$ is a coarse equivalence. 
\end{theorem}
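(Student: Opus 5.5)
Fix $x_0 \in X$. The plan is to assemble the coarse equivalence from the pieces already proved, and then build an explicit coarse inverse $\psi \colon X \to G$.

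\textbf{Step 1: $\rho_{x_0}$ is a coarse map.} Under hypothesis (a) or (b), Proposition \ref{prop:bornologous_orbits} shows that $\rho_{x_0}$ is bornologous. Since the action is coarsely proper, Theorem \ref{thm:equivalences}, implication $(1)\Rightarrow(2)$, shows that $\rho_{x_0}$ is coarsely proper. Hence $\rho_{x_0}$ is coarse. By Lemma \ref{Lem:Expan_orbit} it is also expanding: for every $E \in \mathcal{E}_X$ we have $(\rho_{x_0}\times\rho_{x_0})^{-1}(E) \in \mathcal{E}_G$.

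\textbf{Step 2: build the inverse.} Since the action is coarsely cobounded, Theorem \ref{thm:cobounded_equivalences} shows that $\rho_{x_0}$ is coarsely surjective. So there is a symmetric entourage $E \supseteq \Delta_X$ with $X = E[Gx_0]$. Using choice, define $\psi \colon X \to G$ by picking, for each $x \in X$, some $\psi(x) \in G$ with $(x, \psi(x)x_0) \in E$. Then $\rho_{x_0}\circ\psi$ is $E$-close to $\operatorname{id}_X$ by construction.

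For the other composite, observe that $(gx_0, \psi(gx_0)x_0) \in E$ for every $g$. By the expanding property, $(g, \psi(gx_0)) \in (\rho_{x_0}\times\rho_{x_0})^{-1}(E) \in \mathcal{E}_G$. So $\psi\circ\rho_{x_0}$ is close to $\operatorname{id}_G$.

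\textbf{Step 3: $\psi$ is bornologous.} Let $F \in \mathcal{E}_X$ and $(x,y)\in F$. Then
\[ (\psi(x)x_0, \psi(y)x_0) \in E \circ F \circ E. \]
Applying the expanding property to $E\circ F\circ E$ gives $(\psi(x),\psi(y)) \in (\rho_{x_0}\times\rho_{x_0})^{-1}(E\circ F\circ E) \in \mathcal{E}_G$.

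\textbf{Step 4: $\psi$ is coarsely proper.} Let $K\subseteq G$ be bounded. If $\psi(x)\in K$, then $x \in E[Kx_0]$. Lemma \ref{lem:bounded_orbits_unified} shows that $Kx_0$ is bounded, so $E[Kx_0]$ is bounded. Hence $\psi^{-1}(K)$ is bounded, and $\psi$ is a coarse map. Steps 2--4 together make $\psi$ a coarse inverse of $\rho_{x_0}$, so $\rho_{x_0}$ is a coarse equivalence.

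\textbf{Main obstacle.} The step needing the most care is the control of $\psi$. This rests entirely on the expanding property (Lemma \ref{Lem:Expan_orbit}), which in turn uses that the action is by uniform coarse equivalences together with coarse properness of $\rho_{x_0}$. I would double-check the orientation conventions of $E[\cdot]$ and of $E_K$ there, so that the symmetric entourages compose correctly. Everything else is bookkeeping with entourage compositions.
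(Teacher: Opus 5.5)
Your proof is correct and follows the paper's route: $\rho_{x_0}$ is bornologous by Proposition \ref{prop:bornologous_orbits}, coarsely proper by Theorem \ref{thm:equivalences}, and coarsely surjective by Theorem \ref{thm:cobounded_equivalences}. Your explicit inverse $\psi$, built from the expanding property of Lemma \ref{Lem:Expan_orbit}, spells out the final step that the paper's one-line conclusion leaves implicit; that step does need expansion, since properness, control and coarse surjectivity alone do not give a coarse equivalence.
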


\begin{proof}
Fix a basepoint $x \in X$. By Proposition \ref{prop:bornologous_orbits}, the orbit map $\rho_x$ is a bornologous map. By Theorem \ref{thm:equivalences}, because the action $G \curvearrowright X$ is coarsely proper, the orbit map $\rho_x$ is a coarsely proper map. By Theorem \ref{thm:cobounded_equivalences}, because the action is coarsely cobounded, the orbit map $\rho_x \colon G \to X$ is coarsely surjective.  
Therefore, $\rho_x$ is a coarse equivalence between $G$ and $X$.
\end{proof}

\section{Coarse Fundamental Domains and Dirichlet Tiling}

\subsection*{Metrisability of Coarse Quotients}

This section addresses the coarse and topological metrisability of the orbit space $X/G$ under continuous group actions. A fundamental requirement when extending geometric group theory to the coarse category is ensuring that the algebraic quotient structure remains compatible with the metric structure. To this end, we first establish that when a group acts by isometries, the canonical quotient coarse structure on the orbit space is compatible with the coarse structure induced by the quotient pseudometric. 

Following this structural compatibility, we analyse the metrisability of the quotient space by distinguishing two principal geometric regimes:
\begin{enumerate}
    \item \textbf{The Classical Metric Framework:} The underlying space $X$ is an arbitrary pseudometric space, which elevates to a Hausdorff metric space under the assumption that $X$ is a proper metric space and the action of $G$ is coarsely proper.
    \item \textbf{The Large-Scale Polish Framework:} The group $G$ is a locally bounded Polish group acting in a coarsely proper and coarsely cobounded manner on a general topological coarse space $X$. This allows us to establish coarse metrisability via a quotient pseudometric even when classical local compactness is entirely absent.
\end{enumerate}

As a primary application of this large-scale framework, we conclude the section by formalising the geometric structure of homogeneous spaces. We demonstrate that for closed subgroups of locally bounded Polish groups, the canonical quotient coarse structure on the coset space coincides with the Hausdorff-distance coarse structure.

\begin{lemma} \label{lem:quotient_coarse_structure}
Let $G$ act by isometries on a pseudometric space $(X, d_X)$. Then the orbit space $X/G$ naturally inherits a quotient pseudometric given by:
\[ d_{X/G}([x], [y]) = \inf_{g \in G} d_X(x, gy). \]
Moreover, the coarse structure on the orbit space $X/G$ induced by the quotient pseudometric $d_{X/G}$ coincides with the canonical quotient coarse structure induced by the projection map $\pi \colon X \to X/G$.
\end{lemma}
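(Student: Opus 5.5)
The plan has two parts: first check that $d_{X/G}$ is a well-defined pseudometric on $X/G$, then compare the two coarse structures by showing each basic entourage of one is contained in a basic entourage of the other.

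\textbf{Well-definedness.} If $x' = hx$ and $y' = ky$, then
\[ \inf_{g} d_X(hx, gky) = \inf_{g} d_X(x, h^{-1}gky) = \inf_{g'} d_X(x, g'y), \]
using that $h^{-1}$ acts isometrically and that $g \mapsto h^{-1}gk$ is a bijection of $G$. Symmetry follows from $d_X(x, gy) = d_X(g^{-1}x, y)$ and reindexing $g \mapsto g^{-1}$. Clearly $d_{X/G}([x],[x]) = 0$ and $d_{X/G} \ge 0$.

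\textbf{Triangle inequality.} Given $[x],[y],[z]$ and $\epsilon > 0$, pick $g$ with $d_X(x, gy) \le d_{X/G}([x],[y]) + \epsilon$ and $h$ with $d_X(y, hz) \le d_{X/G}([y],[z]) + \epsilon$. Since $g$ is an isometry, $d_X(gy, ghz) = d_X(y, hz)$, so
\[ d_{X/G}([x],[z]) \le d_X(x, ghz) \le d_{X/G}([x],[y]) + d_{X/G}([y],[z]) + 2\epsilon. \]
Letting $\epsilon \to 0$ gives the triangle inequality.

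\textbf{Comparison of coarse structures.} Isometric actions are by uniform coarse equivalences, since $gE_r \subseteq E_r$. So Proposition \ref{prop:quotient_structure} applies, and the sets $(\pi\times\pi)(E)$ with $E \in \mathcal{E}_X$ form a base of $\mathcal{E}_{X/G}$. Each $E \in \mathcal{E}_{d_X}$ lies in some $E_r$, so it suffices to compare $(\pi\times\pi)(E_r)$ with the metric entourage $\tilde{E}_r \coloneqq \{([x],[y]) \mid d_{X/G}([x],[y]) \le r\}$.

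\emph{First inclusion.} If $d_X(x', y') \le r$, then $d_{X/G}([x'],[y']) \le d_X(x', y') \le r$, so $(\pi\times\pi)(E_r) \subseteq \tilde{E}_r$.

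\emph{Second inclusion.} If $d_{X/G}([x],[y]) \le r$, the infimum need not be attained. So I pass to a slightly larger radius: there is $g \in G$ with $d_X(x, gy) \le r+1$. Then $(x, gy) \in E_{r+1}$ and $\pi(gy) = [y]$, so $([x],[y]) \in (\pi\times\pi)(E_{r+1})$. Hence $\tilde{E}_r \subseteq (\pi\times\pi)(E_{r+1})$.

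These two inclusions show the bases are mutually cofinal, so they generate the same coarse structure. The only delicate point is the non-attainment of the infimum, and the $r+1$ slack above handles it.
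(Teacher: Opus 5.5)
Your proof is correct and follows the same route as the paper's (comparing the pushforwards $(\pi\times\pi)(E_R)$ with the $d_{X/G}$-balls), but it is more careful. The paper asserts that these basic entourages are literally equal, which fails when the infimum is not attained: take $\mathbb{Q}$ translating the first coordinate of the Euclidean plane, with $x=(0,0)$ and $y=(\sqrt{2},R)$, so that $d_{X/G}([x],[y])=R$ yet $d_X(x,gy)>R$ for every $g$. Your $r+1$ slack, giving mutual cofinality of the two bases, is exactly the needed repair, and you also verify well-definedness and the triangle inequality explicitly, which the paper only asserts.
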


\begin{proof}
That $d_{X/G}$ defines a pseudometric follows from the assumption that $G$ acts by isometries, which guarantees symmetry and the triangle inequality on the orbit space. Let $\mathcal{E}_X$ denote the coarse structure on $X$ induced by $d_X$, which is generated by the entourages 
\[ E_R = \{ (x_1, x_2) \in X \times X \mid d_X(x_1, x_2) \le R \} \]
for $R > 0$. By definition, the canonical quotient coarse structure on $X/G$ is generated by the pushforwards of these entourages under $\pi \times \pi$:
\[ (\pi \times \pi)(E_R) = \{ ([x], [y]) \mid \exists x' \in [x], y' \in [y] \text{ such that } d_X(x', y') \le R \}. \]
Since $G$ acts by isometries, for any $x' = g_1 x$ and $y' = g_2 y$, we have $d_X(x', y') = d_X(g_1 x, g_2 y) = d_X(x, g_1^{-1} g_2 y)$. Thus, the condition that there exist such $x'$ and $y'$ is equivalent to the existence of some $g \in G$ such that $d_X(x, gy) \le R$.

Therefore, we can rewrite the projected entourage as:
\[ (\pi \times \pi)(E_R) = \{ ([x], [y]) \mid \inf_{g \in G} d_X(x, gy) \le R \}. \]
This is the entourage of radius $R$ defined by the quotient pseudometric $d_{X/G}$. Hence, the basic entourages of the quotient coarse structure and the basic entourages of the metric coarse structure induced by $d_{X/G}$ are identical, meaning the two coarse structures coincide.
\end{proof}

\begin{proposition} \label{prop:quotient_hausdorff}
Let a topological group $G$ act continuously by isometries on a proper metric space $(X, d_X)$. If the action of $G$ is proper, then $d_{X/G}$ is a metric, rendering the quotient space $X/G$ metrisable and hence Hausdorff.
\end{proposition}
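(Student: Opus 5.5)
The plan is to show that the quotient pseudometric $d_{X/G}$ from Lemma \ref{lem:quotient_coarse_structure} separates points. Concretely, I will show that $d_{X/G}([x],[y]) = 0$ forces $[x] = [y]$; equivalently, every orbit $Gy$ is closed in $X$. Throughout I read ``proper'' in the classical topological sense: for every compact $K \subseteq X$, the transporter $(K \mid K)_G$ is relatively compact in $G$. Lemma \ref{lem:quotient_coarse_structure} already gives symmetry and the triangle inequality. So metrisability reduces to this separation property, plus a check that the metric topology agrees with the quotient topology.

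\textbf{Step 1 (separation).} Suppose $\inf_{g \in G} d_X(x, gy) = 0$.
\begin{itemize}
\item Choose $g_n \in G$ with $d_X(x, g_n y) < 1/n$.
\item Let $K \coloneqq \overline{B}(x,1) \cup \{y\}$. This set is compact because $X$ is a proper metric space, so closed balls are compact.
\item For every $n$ we have $g_n y \in K$ and $y \in K$, so $g_n K \cap K \neq \emptyset$. Hence every $g_n$ lies in $(K \mid K)_G$.
\item By properness of the action, this transporter has compact closure $L$.
\item The sequence $(g_n)$ therefore has a subnet $(g_{n_\alpha})$ converging to some $g \in L$. I use subnets rather than subsequences because $G$ need not be first countable.
\item The orbit map $\rho_y$ is continuous, being a restriction of the continuous action map. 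So $g_{n_\alpha} y \to g y$.
\item On the other hand $g_{n_\alpha} y \to x$, since $d_X(x, g_n y) \to 0$.
\item $X$ is Hausdorff, so limits are unique and $x = g y$, giving $[x] = [y]$.
\end{itemize}
Thus $d_{X/G}$ is a genuine metric on $X/G$.

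\textbf{Step 2 (topologies agree).} Next I identify the quotient topology on $X/G$ with the metric topology of $d_{X/G}$.
\begin{itemize}
\item The action is by isometries, so a direct computation from the formula for $d_{X/G}$ gives $\pi(B(x,r)) = B_{d_{X/G}}([x], r)$ for open balls.
\item The projection $\pi$ is open for the quotient topology, since $\pi^{-1}(\pi(U)) = \bigcup_{g} gU$ and each $gU$ is open.
\item Hence the images of open balls form a base for the quotient topology. By the previous point, these images are exactly the $d_{X/G}$-balls.
\end{itemize}
Consequently $X/G$ is metrisable by $d_{X/G}$, and in particular Hausdorff.

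The main obstacle is Step 1, specifically extracting a limit element $g$ from the sequence $g_n$. This is exactly where both hypotheses are used: properness of $X$ makes $K$ compact, and properness of the action makes the transporter relatively compact. If ``proper'' were instead meant in the coarse sense of Section 4, this compactness argument would break down. In that case I would fall back on Lemma \ref{lem:Compar-Struc}(2), assuming $G$ is locally compact Polish, to convert coarse boundedness of the transporter into relative compactness, and then run the same argument.
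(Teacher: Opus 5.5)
Your proof is correct, and Step 1 follows the paper's own argument: an approximating sequence $g_n$, compactness of the closed unit ball, relative compactness of the transporter, continuity of the orbit map, and uniqueness of limits. Two of your additions are genuine improvements over the paper. The paper extracts a convergent \emph{subsequence} from a relatively compact subset of an arbitrary topological group with no first-countability assumption, which subnets repair. It also asserts metrisability of $X/G$ without checking, as your Step~2 does, that $d_{X/G}$ induces the quotient topology.
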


\begin{proof}
To establish that $d_{X/G}$ is a metric under a proper action on a proper metric space, it suffices to show point separation. Suppose $[x], [y] \in X/G$ such that $d_{X/G}([x], [y]) = 0$. By definition of the infimum, there exists a sequence $(g_n)$ in $G$ such that:
\[ \lim_{n \to \infty} d_X(x, g_n y) = 0. \]
Therefore, the sequence $(g_n y)$ is eventually contained in the closed ball of radius $1$ centered at $x$, denoted $B_1(x)$. Because $X$ is a proper metric space, the closed ball $B_1(x)$ is compact. 

Because the action of $G$ is proper, the set of group elements $\{ g \in G \mid g\{y\} \cap B_1(x) \neq \emptyset \}$ is relatively compact in $G$. Thus, the sequence $(g_n)$ must contain a convergent subsequence $g_{n_k} \to g \in G$.

By the continuity of the action, $g_{n_k} y$ converges to $gy$. Since every subsequence of $(g_n y)$ must converge to the same limit $x$, we conclude that $x = gy$. This implies $[x] = [y]$, proving that $d_{X/G}$ separates points. 
\end{proof}

\begin{lemma}[{\cite[Proposition 2.14]{Rosendal}}] \label{lem:coarse_metrizability_polish}
Let $G$ be a Polish group. The coarse structure of $G$ is monogenic (i.e., coarsely metrisable) if and only if $G$ is locally bounded. Specifically, if $G$ is locally bounded, its left-coarse structure is generated by a single continuous left-invariant pseudometric.
\end{lemma}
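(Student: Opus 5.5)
The statement splits into two implications, and both rest on Rosendal's metric description of $\mathcal{E}_\mathcal{P}$ (Definition \ref{def:rosendal_pseudometric} and Lemma \ref{lem:pseudometric_boundedness}). A coarse structure is monogenic precisely when it is generated by a single pseudometric, since the entourages $E_n$ form a countable basis and conversely powers of one entourage form one. So the target is: $G$ is locally bounded if and only if there is a single $d\in\mathcal{P}$ whose bounded sets are exactly $\mathcal{B}_\mathcal{P}$. Because every $d \in \mathcal{P}$ satisfies $\mathcal{E}_\mathcal{P} \subseteq \mathcal{E}_d$, this amounts to finding $d\in\mathcal{P}$ such that every $d$-bounded set is coarsely bounded.

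For the forward direction I assume $V$ is an open, symmetric, coarsely bounded identity neighbourhood. Since $G$ is Polish, there is a countable set $C$ with $G=\langle V\cup C\rangle$. I would build a word-length pseudometric from the generating set $V\cup C$: give each $v\in V$ weight $1$ and give the $n$-th element $c_n$ of $C$ a weight $w_n$, where the weights increase fast enough that each ball is contained in a finite product of sets of the form $(FV)^k$. The resulting $\rho$ is left-invariant, and it is continuous because it is bounded by $1$ on the open set $V$. The key check is the following: if $B$ is $\rho$-bounded with $\rho$-radius $R$, then $B$ lies in a product of at most $R$ factors from $V\cup\{c_n: w_n\le R\}$, and the latter set is finite. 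Since $V$ is coarsely bounded and $\mathcal{B}_\mathcal{P}$ is a group bornology, $B$ is then coarsely bounded. Thus $\mathcal{E}_\rho=\mathcal{E}_\mathcal{P}$, and the structure is monogenic.

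For the converse I assume $\mathcal{E}_\mathcal{P}=\mathcal{E}_d$ for some pseudometric $d$. If the generating pseudometric is not already continuous, I would first replace it by some $d'\in\mathcal{P}$ with the same bounded sets. This is possible because the countable basis $E_{B_n}$, with $B_n\in\mathcal{B}_\mathcal{P}$, is cofinal, so every coarsely bounded set lies inside some $B_n$. If no identity neighbourhood were bounded, then for each $n$ a Birkhoff–Kakutani-style construction yields $d_n\in\mathcal{P}$ and a neighbourhood $U_n$ of the identity on which $d_n$ exceeds $\operatorname{diam}_{d_n}(B_n)+n$. Summing $d=\sum 2^{-n}\min(d_n, \cdot)$ does not obviously preserve unboundedness, so instead I will argue directly with Lemma \ref{lem:algebraic_boundedness}. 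A set of the form $(FV)^k$ with small $V$ cannot contain all the $B_n$, and a diagonalization over a decreasing neighbourhood basis $V_n$ produces a single continuous pseudometric under which some ball $\{g:d(1,g)<1\}$ is unbounded, because $G$ is covered by countably many left translates of any neighbourhood.

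I expect this converse to be the main obstacle. Here I would simply invoke \cite[Proposition 2.14]{Rosendal}, as the lemma's citation indicates, rather than reprove the diagonal argument. The forward construction of the weighted word metric is routine once countable generation over $V$ is in hand.
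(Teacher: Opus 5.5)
The paper gives no argument for this lemma beyond citing \cite[Proposition 2.14]{Rosendal}, so your reconstruction has to stand on its own. Its forward direction contains a genuine error: the weighted word pseudometric $\rho$ is not continuous. Every letter of $V\cup C$ has weight at least $c=\min(1,\inf_n w_n)>0$. Hence $\rho(1_G,g)\geq c$ for all $g\neq 1_G$, so $\{g:\rho(1_G,g)<c\}=\{1_G\}$, and $\rho$ is continuous only when $G$ is discrete. Being bounded by $1$ on the open set $V$ controls $\rho$ at scale $1$, not at small scales. So $\rho\notin\mathcal{P}$. The inclusion $\mathcal{E}_\mathcal{P}\subseteq\mathcal{E}_\rho$, which you took for free from $\rho\in\mathcal{P}$, is therefore unjustified, and the clause asking for a \emph{continuous} generating pseudometric is not proved. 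Metrisability alone is easily rescued: by Lemma \ref{lem:algebraic_boundedness}, each coarsely bounded set lies in some $(FV)^k$ with $F$ finite, and $\rho$ is bounded there by subadditivity. For continuity, choose $\partial\in\mathcal{P}$ with $\{g:\partial(1_G,g)<1\}\subseteq V$. Give $v\in V$ the weight $\partial(1_G,v)$ instead of $1$, keeping $w_n\geq 1$ and $w_n\to\infty$. Then $\rho(1_G,v)\leq\partial(1_G,v)$ gives continuity. A word of weight at most $R$ still has at most $R$ letters from $C^{\pm 1}$, all from a finite set $F$. Its runs of $V$-letters split greedily into $O(R)$ blocks, each either a single letter or a subword of total $\partial$-weight $<1$; by the triangle inequality and left-invariance, each block's product lies in $V$. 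So $\rho$-balls lie in sets $(V\cup F\cup F^{-1})^N$, which are coarsely bounded.

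Two further points. First, your opening reduction is false: a single pseudometric yields the countable basis $\{E_n\}$, not a single generating entourage. If $\mathcal{E}_\mathcal{P}$ were generated by one entourage $E\subseteq E_B$, coarse connectedness would force $G=\bigcup_n B^n$. Now take the countable discrete group $\bigoplus_{n\in\mathbb{N}}\mathbb{Z}$. It is Polish and locally bounded, and its coarsely bounded sets are exactly the finite sets, since it carries a proper left-invariant metric. It is not finitely generated, so its coarse structure is pseudometrisable but not monogenic. Only the reading ``pseudometrisable if and only if locally bounded'' of the statement is true, and that is what your argument should target. Second, your converse sketch does not work as written. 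The function $d_n(1_G,\cdot)$ vanishes at $1_G$, so it cannot exceed a positive constant on a neighbourhood of $1_G$, and the diagonalisation is never specified. Deferring to \cite{Rosendal} matches the paper, but a direct proof is short and needs no continuous pseudometric. Take an increasing basis $(E_n)$ of $\mathcal{E}_\mathcal{P}$ and set $B_n=E_n[1_G]$; then every coarsely bounded set lies in some $B_n$. Suppose no identity neighbourhood were bounded, and pick $g_n\in V_n\setminus B_n$ along a decreasing neighbourhood basis $(V_n)$ at $1_G$. Then $\{1_G\}\cup\{g_n : n\in\mathbb{N}\}$ is compact, hence coarsely bounded by Lemma \ref{lem:Compar-Struc}, hence contained in some $B_N$. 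This contradicts $g_N\notin B_N$.
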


\begin{proposition} \label{prop:coarse_metrizability}
Let $G$ be a locally bounded Polish group acting continuously on a topological coarse space $X$. If the action is coarsely proper and coarsely cobounded, then the space $X$ is coarsely metrisable, and the coarse orbit space $X/G$ inherits a quotient pseudometric.
\end{proposition}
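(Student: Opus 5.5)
The plan is to transport metrisability from $G$ to $X$ along the orbit map. Fix a basepoint $x_0 \in X$. Because $G$ is Polish, the action is continuous, coarsely proper and coarsely cobounded, the Coarse \v{S}varc-Milnor Lemma (Theorem \ref{thm:svarc_milnor}, case (a)) makes $\rho_{x_0} \colon G \to X$ a coarse equivalence. Since $G$ is locally bounded, Lemma \ref{lem:coarse_metrizability_polish} supplies a continuous left-invariant pseudometric $d$ on $G$ whose metric coarse structure equals $\mathcal{E}_\mathcal{P}$. What remains is to push $d$ forward to a pseudometric on $X$ that generates $\mathcal{E}_X$, and then to pass to the quotient.

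To build the pseudometric on $X$, use coboundedness. By Theorem \ref{thm:cobounded_equivalences}, there is a symmetric entourage $M \in \mathcal{E}_X$ with $X = M[Gx_0]$. For each $x \in X$ choose, via the Axiom of Choice, an element $s(x) \in G$ with $(x, s(x)x_0) \in M$, and define
\[ d_X(x,y) \coloneqq d(s(x), s(y)). \]
This is a pseudometric on $X$: symmetry and the triangle inequality come from $d$, and $d_X(x,x)=0$. To see that it generates $\mathcal{E}_X$, note that $s$ is a coarse inverse of $\rho_{x_0}$, since $\rho_{x_0}\circ s$ is $M$-close to $\operatorname{id}_X$. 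Because $\rho_{x_0}$ is a coarse equivalence, $s$ is bornologous.

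Both inclusions then follow.
\begin{enumerate}
\item If $E \in \mathcal{E}_X$, then $(s\times s)(E) \in \mathcal{E}_\mathcal{P}$, so $d$ is bounded on it, and hence $d_X$ is bounded on $E$.
\item Conversely, suppose $d_X(x,y)\le R$. Then $(s(x),s(y))$ lies in the $d$-entourage of radius $R$, which belongs to $\mathcal{E}_\mathcal{P}$. Since $\rho_{x_0}$ is bornologous (Proposition \ref{prop:bornologous_orbits}), the pair $(s(x)x_0, s(y)x_0)$ lies in a fixed entourage $F_R$. Composing with $M$ on both sides gives $(x,y) \in M\circ F_R\circ M$.
\end{enumerate}
Hence $\mathcal{E}_X = \mathcal{E}_{d_X}$, and $X$ is coarsely metrisable. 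Countable generation (\cite[Theorem 2.55]{Roe}) gives the same conclusion, since $\mathcal{E}_X$ is the image of the countably generated structure $\mathcal{E}_\mathcal{P}$.

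For the quotient, I would not use $d_X$ directly, because the action need not be by $d_X$-isometries, so Lemma \ref{lem:quotient_coarse_structure} does not apply. Instead, use coboundedness once more. By Theorem \ref{thm:cobounded_equivalences}(3), $X/G$ is coarsely bounded, so its quotient coarse structure (Proposition \ref{prop:quotient_structure}) is the maximal one, $\mathcal{P}(X/G\times X/G)$; this requires coarse connectedness. That structure is induced by the zero pseudometric, or by any bounded quotient pseudometric. If one wants a formula, set $d_{X/G}([x],[y]) \coloneqq \inf_{g,h} d_X(gx,hy)$ and check that it is bounded; in fact, since $X/G$ is bounded, the zero pseudometric already induces the quotient structure.

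I expect the main obstacle to be making sure the constructed $d_X$ is a genuine pseudometric that induces exactly $\mathcal{E}_X$, rather than only up to the choice function. The two-sided comparison in the list above is the key check, and it relies on both directions of the coarse equivalence from Theorem \ref{thm:svarc_milnor}. One caveat is that $d_X$ need not be continuous on $X$. The statement only asserts coarse metrisability, so continuity is not needed.
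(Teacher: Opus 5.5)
Your proof is correct and follows the paper's route: Lemma~\ref{lem:coarse_metrizability_polish} and Theorem~\ref{thm:svarc_milnor}, then transfer of metrisability along the coarse equivalence $\rho_{x_0}$. Where the paper merely asserts that coarse metrisability is invariant under coarse equivalence, you verify it with the pulled-back pseudometric $d(s(x),s(y))$, and you also justify the quotient clause, which the paper's proof never addresses, via the boundedness of $X/G$ from Theorem~\ref{thm:cobounded_equivalences}.

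The one blemish is the optional formula $\inf_{g,h} d_X(gx,hy)$. It need not satisfy the triangle inequality, since $G$ does not act by $d_X$-isometries. If you want a genuine quotient pseudometric, first pass to the $G$-invariant, coarsely equivalent pseudometric $\sup_{g\in G} d_X(gx,gy)$ and then apply Lemma~\ref{lem:quotient_coarse_structure}.
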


\begin{proof}
By Lemma \ref{lem:coarse_metrizability_polish}, the locally bounded Polish group $G$ is a metrisable coarse space. Because the action of $G$ on $X$ is coarsely proper and coarsely cobounded, the Coarse \v{S}varc-Milnor Lemma (Theorem~\ref{thm:svarc_milnor}) implies that the orbit map $\rho_{x_0} \colon G \to X$ is a coarse equivalence. 

Since coarse metrisability is invariant under coarse equivalence, the space $X$ is coarsely metrisable. 
\end{proof}

Before we resolve the structural problem on homogeneous spaces, we formally identify the natural geometric structure on the coset space.

\begin{definition}
Let $G$ be a topological group equipped with its left-coarse structure, and let $H$ be a closed subgroup. The \emph{Hausdorff-distance coarse structure} on the coset space $H \backslash G$ is the coarse structure generated by the family of Hausdorff pseudometrics $d_{\mathrm{Haus}}$, obtained by pushing forward each continuous left-invariant pseudometric $d \in \mathcal{P}(G)$ via:
\[ d_{\mathrm{Haus}}(Hx, Hy) \coloneqq \sup_{a \in Hx} \inf_{b \in Hy} d(a, b). \]
\end{definition}

As a result of \ref{lem:quotient_coarse_structure}, we get a special case of Theorem 7 of Estrada and Rosendal \cite{EstradaRosendal}.

\begin{corollary}\label{cor:estrada_rosendal}
Let $G$ be a locally bounded Polish group and $H$ a closed subgroup of $G$. The canonical quotient coarse structure on the right coset space $H \backslash G$ coincides exactly with the Hausdorff-distance coarse structure. 

Consequently, if $H$ is a normal subgroup, the canonical quotient coarse structure on the quotient group $G/H$ is exactly its left-coarse structure.
\end{corollary}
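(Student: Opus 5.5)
The plan is to realise the coset space as an orbit space and then invoke Lemma~\ref{lem:quotient_coarse_structure}. Let $H$ act on $G$ by left multiplication, $h\cdot g = hg$, so that the orbits are exactly the right cosets $Hg$ and $G/H$ in the sense of Lemma~\ref{lem:quotient_coarse_structure} is $H\backslash G$. For every $d \in \mathcal{P}$, left-invariance gives $d(hx,hy)=d(x,y)$, so $H$ acts by isometries of $(G,d)$. Lemma~\ref{lem:quotient_coarse_structure} then says that the quotient of the $d$-coarse structure is induced by the pseudometric
\[ d_{H\backslash G}(Hx,Hy)=\inf_{h\in H} d(x,hy). \]

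The key computation identifies this infimum with the Hausdorff pseudometric. Writing $a=h'x$ and $b=hy$ and using left-invariance,
\[ d_{\mathrm{Haus}}(Hx,Hy)=\sup_{h'\in H}\inf_{h\in H} d(h'x,hy)=\sup_{h'\in H}\inf_{h\in H} d(x,h'^{-1}hy)=\inf_{h\in H} d(x,hy). \]
The last step holds because $h'^{-1}h$ runs over all of $H$ as $h$ does. So $d_{\mathrm{Haus}}=d_{H\backslash G}$ for each $d\in\mathcal P$, and the remaining task is to pass from a single pseudometric to the whole family.

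This passage is where local boundedness enters, and I expect it to be the main obstacle, because the paper does not make explicit whether ``generated by the family'' means the intersection over $d\in\mathcal P$ (as for $\mathcal{E}_\mathcal{P}$) or the join. I would use Lemma~\ref{lem:coarse_metrizability_polish} to fix a single $d_0\in\mathcal P$ with $\mathcal{E}_\mathcal{P}=\mathcal{E}_{d_0}$. Then the canonical quotient structure is the pushforward of the $d_0$-structure, which by Lemma~\ref{lem:quotient_coarse_structure} is the $(d_0)_{\mathrm{Haus}}$-structure.

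It remains to show that the $d_{\mathrm{Haus}}$ for the other $d\in\mathcal P$ add nothing. Suppose $(d_0)_{\mathrm{Haus}}(Hx,Hy)\le R$. Then there are representatives with $d_0(x,hy)\le R+1$. The $d_0$-entourage $E_{R+1}$ lies in $\mathcal{E}_\mathcal{P}$, so it has uniformly bounded $d$-displacement $S$, and hence $d_{\mathrm{Haus}}\le S$ on the same pairs. Conversely, $d_0$ itself belongs to $\mathcal P$. Either reading of ``generated'' therefore yields the same structure.

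For the normal case, $H\backslash G=G/H$ is a group. Normality gives $d(gx,hgy)=d(x,(g^{-1}hg)y)$, so $\bar d_0(xH,yH)=\inf_h d_0(x,hy)$ is left-invariant. It is continuous because $\pi$ is open and $\bar d_0\circ(\pi\times\pi)$ is continuous on $G\times G$. It remains to show that $\bar d_0$ generates the left-coarse structure of $G/H$. For this I would check the two inclusions of bornologies, which also handles the step I expect to be most delicate here. In one direction, every $\rho\in\mathcal P(G/H)$ pulls back to $\rho\circ(\pi\times\pi)\in\mathcal P(G)$, and $d_0$ dominates it coarsely. In the other direction, $\bar d_0\in\mathcal P(G/H)$. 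Together these show that the quotient structure equals the left-coarse structure of $G/H$.
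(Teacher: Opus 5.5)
Your proof is correct for the intended meaning of the statement and follows the paper's route. Like the paper, you let $H$ act on $(G,d_0)$ by left translations, apply Lemma~\ref{lem:quotient_coarse_structure}, collapse $\sup_{a}\inf_{b}$ to $\inf_{h}d_0(x,hy)$ by left-invariance, and use Lemma~\ref{lem:coarse_metrizability_polish} to obtain a single generating $d_0$. Two of your steps are more careful than the paper, which works only with $d_0$ and simply asserts the normal-subgroup consequence: the domination argument passing from $d_0$ to every $d\in\mathcal{P}$, and your verification of the normal case. For the continuity of $\bar d_0$, add the one-line estimate $\lvert \inf_h d_0(x,hy)-\inf_h d_0(x',hy')\rvert\le d_0(x,x')+d_0(y,y')$, which follows from left-invariance. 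It is needed because an infimum of continuous functions is in general only upper semicontinuous.

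One claim is wrong and should be removed: the two readings of ``generated'' do \emph{not} give the same structure.
\begin{itemize}
\item Your estimate shows $\{(d_0)_{\mathrm{Haus}}\le R\}\subseteq\{d_{\mathrm{Haus}}\le S\}$, i.e.\ $\mathcal{E}_{(d_0)_{\mathrm{Haus}}}\subseteq\mathcal{E}_{d_{\mathrm{Haus}}}$ for every $d\in\mathcal{P}$. Together with $d_0\in\mathcal{P}$, this identifies the \emph{intersection} $\bigcap_{d\in\mathcal{P}}\mathcal{E}_{d_{\mathrm{Haus}}}$ with $\mathcal{E}_{(d_0)_{\mathrm{Haus}}}$.
\item The join would require the reverse inclusions $\mathcal{E}_{d_{\mathrm{Haus}}}\subseteq\mathcal{E}_{(d_0)_{\mathrm{Haus}}}$, and these fail. $\mathcal{P}$ contains the zero pseudometric (and $\min(d_0,1)$), whose Hausdorff pseudometric makes all of $(H\backslash G)\times(H\backslash G)$ an entourage.
\item So under the join reading the Hausdorff-distance structure is the maximal coarse structure, and the corollary would be false whenever $H\backslash G$ is unbounded.
\end{itemize}
State the definition as the intersection, parallel to $\mathcal{E}_{\mathcal{P}}$, and the rest of your argument stands.
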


\begin{proof}
Because $G$ is a locally bounded Polish group, its left-coarse structure is pseudometrisable (Lemma \ref{lem:coarse_metrizability_polish}) and generated by a single continuous left-invariant pseudometric $d_G$. 

Consider the action of $H$ on $G$ by left translations, given by $h \cdot x = hx$. Because $d_G$ is left-invariant, $d_G(h \cdot x, h \cdot y) = d_G(x, y)$, and hence $H$ acts on $(G, d_G)$ by isometries. The orbit space of this left action is the space of right cosets $H \backslash G$.

By Lemma \ref{lem:quotient_coarse_structure}, the canonical quotient coarse structure on $H \backslash G$ coincides with the coarse structure generated by the quotient pseudometric:
\[ d_{H \backslash G}(Hx, Hy) = \inf_{h \in H} d_G(x, hy). \]

We compare this to the Hausdorff pseudometric $d_{\mathrm{Haus}}$ between the subsets $Hx$ and $Hy$ in $G$. Because $d_G$ is left-invariant, the distance from any specific element $h_0 x \in Hx$ to the set $Hy$ is constant. Indeed:
\[ d_G(h_0 x, Hy) = \inf_{k \in H} d_G(h_0 x, k y) = \inf_{k \in H} d_G(x, h_0^{-1} k y) = \inf_{h \in H} d_G(x, hy). \]
Since this value is independent of the representative $h_1 \in H$, the supremum over all elements in $Hx$ collapses, yielding:
\[ d_{\mathrm{Haus}}(Hx, Hy) = \sup_{a \in Hx} \inf_{b \in Hy} d_G(a, b) = d_{H \backslash G}(Hx, Hy). \]
Thus, the quotient coarse structure and the Hausdorff-distance coarse structure are identical.

If $H$ is a normal subgroup, the right coset space $H \backslash G$ coincides with the quotient group $G/H$, and the Hausdorff-distance coarse structure corresponds exactly to the left-coarse structure on $G/H$.
\end{proof}

\begin{remark}
It is instructive to compare this result with Theorem 7 of Estrada and Rosendal \cite{EstradaRosendal}, which establishes the equivalence $\mathcal{E}_q(G/H) = \mathcal{E}_{\mathrm{Haus}}(G/H)$ for closed normal subgroups when $H$ is $\sigma$-cobounded in $G$. In Corollary \ref{cor:estrada_rosendal}, we strengthen the assumption on $G$ by assuming the entire ambient group $G$ is locally bounded. However, this global metric rigidity allows us to loosen the conditions on $H$, requiring only that $H$ is closed. Consequently, Corollary \ref{cor:estrada_rosendal} is applicable to any homogeneous space of a locally bounded Polish group.
\end{remark}

\subsection*{Coarse Fundamental Sets and Orbit Spaces}

Having established the categorical equivalences of coarsely proper and cobounded actions, we now address the constructive problem: \textit{Under what additional hypotheses on a coarse action $G \curvearrowright X$ can one construct a ``coarse fundamental domain'' whose coarse translates cover $X$ with uniformly bounded overlap?} 

In the classical setting, Kapovich shows that proper discontinuity provides the necessary rigidity for such a construction. In our coarse framework, the requirement of \emph{uniform coarse equivalence} provides the corresponding rigidity, allowing us to generalise the tiling properties of discrete groups to Polish and locally bounded groups.

Kapovich \cite[Proposition 34]{Kapovich} demonstrates that for proper actions, a fundamental set $F \subseteq X$ topologically mirrors the quotient space, yielding a homeomorphism $F/G \cong X/G$. We can establish a direct large-scale counterpart by defining fundamental sets bornologically and upgrading the homeomorphism to a coarse equivalence.

\begin{definition}
A subset $F \subseteq X$ is a \emph{coarse fundamental set} for the action $G \curvearrowright X$ if its orbit is $E$-coarsely dense for some entourage $E \in \mathcal{E}$, meaning that $E[G F] = X$. 
\end{definition}

\begin{proposition}\label{prop:coarse_invariance}
Let $G$ act on coarse spaces $X$ and $Y$. Suppose $h \colon X \to Y$ is a $G$-equivariant coarse equivalence. If $F \subseteq X$ is a coarse fundamental set for the action on $X$, then its image $h(F)$ is a coarse fundamental set for the action on $Y$.
\end{proposition}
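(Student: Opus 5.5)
The argument is short and needs only equivariance, the closeness $h \circ \psi \sim \operatorname{id}_Y$ for a coarse inverse $\psi$, and the fact that $h$ is controlled.

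Fix an entourage $E \in \mathcal{E}_X$ with $E[GF] = X$, which we may take symmetric and containing $\Delta_X$. Let $\psi \colon Y \to X$ be a coarse inverse of $h$, and choose a symmetric entourage $N \in \mathcal{E}_Y$ with $(y, h(\psi(y))) \in N$ for all $y \in Y$. Since $h$ is controlled, $E' \coloneqq (h \times h)(E)$ lies in $\mathcal{E}_Y$.

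First, equivariance gives $h(gf) = g\,h(f)$, so $h(GF) = G\,h(F)$. Applying $h$ to $X = E[GF]$, any $x \in X$ satisfies $(x, gf) \in E$ for some $g \in G$ and $f \in F$. Then $(h(x), g\,h(f)) \in E'$, and hence
\[ h(X) \subseteq E'[G\,h(F)]. \]

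Second, let $y \in Y$ be arbitrary. Put $x = \psi(y)$. Then $(y, h(x)) \in N$, and $h(x) \in E'[G\,h(F)]$ by the first step. Composing gives $y \in (N \circ E')[G\,h(F)]$. So
\[ Y = (N \circ E')[G\,h(F)] \quad \text{with } N \circ E' \in \mathcal{E}_Y, \]
and $h(F)$ is a coarse fundamental set.

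There is no real obstacle. The only points needing care are the composition convention for $E[A]$ and the orientation of the entourages, which symmetrising $E$ and $N$ removes. Neither the uniform coarse equivalence property of the action nor properness of $h$ is needed.
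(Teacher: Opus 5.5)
Your proof is correct and takes essentially the same route as the paper. Both arguments push the entourage $E$ forward along the controlled map $h$, use equivariance to identify $h(GF)$ with $G\,h(F)$, and then compose with an entourage witnessing coarse surjectivity of $h$; the only difference is that you extract that entourage explicitly from $h \circ \psi \sim \operatorname{id}_Y$, whereas the paper simply invokes coarse surjectivity of $h$.
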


\begin{proof}
By definition, since $F$ is a coarse fundamental set in $X$, there exists an entourage $E \in \mathcal{E}_X$ such that $E[G F] = X$. 

Because $h \colon X \to Y$ is a coarse equivalence, the image entourage $E_h = (h \times h)(E)$ belongs to $\mathcal{E}_Y$. Applying this to $h(G F)$ yields $E_h[h(G F)] = h(E[G F]) = h(X)$. 

Since $h$ is coarsely surjective, there exists an entourage $K \in \mathcal{E}_Y$ such that $K[h(X)] = Y$. Composing these entourages, we define $E' = K \circ E_h \in \mathcal{E}_Y$, which yields $E'[h(G F)] = Y$.  

Since $h$ is $G$-equivariant, the action commutes with the mapping, yielding $h(G F) = G h(F)$. It follows that $E'[G h(F)] = Y$. Thus, $h(F)$ is a coarse fundamental set in $Y$.
\end{proof}

\begin{proposition} \label{prop:coarse_prop34}
Let a group $G$ act on a pseudometric space $(X, d_X)$ by isometries. If $F \subseteq X$ is a coarse fundamental set, then the natural projection map $p \colon F \to X/G$ given by $p(x) = [x]_{X/G}$ induces a coarse equivalence $\tilde{p} \colon F/G \to X/G$ given by $\tilde{p}([x]_{F/G}) = [x]_{X/G}$.
\end{proposition}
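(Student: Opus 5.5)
The plan is to treat $\tilde{p}$ as an isometric embedding whose image is coarsely dense in $X/G$, and then invoke Lemma \ref{subsp-cor-equi}.

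First I fix the meaning of $F/G$. Two points $x, y \in F$ are identified when they lie in the same $G$-orbit, that is, when $y \in Gx$. This set is equipped with the restricted quotient pseudometric
\[ d_{F/G}([x],[y]) \coloneqq \inf_{g \in G} d_X(x, gy), \qquad x, y \in F, \]
which is the form of the quotient pseudometric in Lemma \ref{lem:quotient_coarse_structure}.

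With this definition the map $\tilde{p}$ is well defined and injective. Points of $F$ are identified in $F/G$ exactly when they are identified in $X/G$, so $\tilde{p}$ is a bijection onto $\pi(F)$. Moreover $d_{X/G}(\tilde{p}[x], \tilde{p}[y]) = d_{F/G}([x],[y])$ by construction. Hence $\tilde{p}$ is an isometric embedding of pseudometric spaces, and by Lemma \ref{lem:quotient_coarse_structure} this is an embedding into $X/G$ with its canonical quotient coarse structure. In particular $\tilde{p}$ is bornologous and coarsely proper, since preimages of bounded sets have the same diameter.

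Next I show that the image $\pi(F)$ is coarsely dense. Because $F$ is a coarse fundamental set, there is an entourage $E \subseteq E_R$ with $E[GF] = X$. So for every $z \in X$ there are $g \in G$ and $f \in F$ with $d_X(z, gf) \le R$, which gives $d_{X/G}([z],[f]) \le R$. Thus $\pi(F)$ is an $R$-net in $X/G$, in the sense of Remark \ref{rem:r_net_connection}.

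I then build the coarse inverse explicitly. For each $[z] \in X/G$ choose some $f_z \in F$ with $d_{X/G}([z],[f_z]) \le R$, taking $f_z = z$ whenever $z \in F$, and set $q([z]) = [f_z]_{F/G}$. By the triangle inequality for $d_{X/G}$ and the isometry property,
\[ d_{F/G}(q[z], q[w]) \le d_{X/G}([z],[w]) + 2R. \]
So $q$ is bornologous. Both composites are $R$-close to the identity, so $\tilde{p}$ is a coarse equivalence. Equivalently, this is Lemma \ref{subsp-cor-equi} applied to the coarsely dense subspace $\pi(F) \cong F/G$.

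The main obstacle is the choice of coarse structure on $F/G$. One could instead give $F/G$ the quotient structure generated by pushing forward only the subspace entourages $E_R \cap (F \times F)$. Then two points of $F$ that are close in $X/G$ through a translate $gf \notin F$ need not be close in $F/G$, and $q$ could fail to be bornologous. I would therefore state explicitly that $F/G$ carries the restricted quotient pseudometric $d_{F/G}$ above. Under that convention, isometry of the action (needed already for Lemma \ref{lem:quotient_coarse_structure}) is exactly what makes $\tilde{p}$ distance preserving. The rest of the argument is routine.
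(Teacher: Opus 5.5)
Your proof is correct and follows essentially the same route as the paper: you give $F/G$ the restricted quotient pseudometric $d_{F/G}([x],[y])=\inf_{g\in G}d_X(x,gy)$, so that $\tilde{p}$ is an isometric embedding, and you use that $GF$ is an $R$-net to show that $\pi(F)$ is an $R$-net in $X/G$. Your explicit coarse inverse $q$ is a harmless addition, but make the choice at the level of orbits (pick $f_z\in[z]\cap F$ whenever this intersection is nonempty) so that $q$ is well defined on $X/G$ and $q\circ\tilde{p}=\operatorname{id}_{F/G}$.
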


\begin{proof}
We equip the restricted orbit space $F/G$ with the subspace pseudometric inherited from $X/G$, explicitly given by $d_{F/G}([x], [y]) = \inf_{g \in G} d_X(x, gy)$ for $x, y \in F$. Under this restricted metric, the induced map $\tilde{p} \colon F/G \to X/G$ is simply the inclusion map, which is an isometric embedding and therefore bornologous.

It remains to show that the image $\tilde{p}(F/G)$ is coarsely dense in $X/G$. Let $[z] \in X/G$ be an arbitrary orbit represented by $z \in X$. Because $F$ is a coarse fundamental set, $G F$ is an $R$-net in $X$ for some $R > 0$. Thus, there exists a point $y \in F$ and an element $g_0 \in G$ such that $d_X(z, g_0 y) \le R$. 

In the quotient space $X/G$, the distance between the orbit of $z$ and the orbit of $y$ satisfies:
\[ d_{X/G}([z], [y]) = \inf_{g \in G} d_X(z, gy) \le d_X(z, g_0 y) \le R. \]
Since $[y] \in \tilde{p}(F/G)$, this proves that the image of the restricted quotient space forms an $R$-net in $X/G$. Because $\tilde{p}$ is an isometric embedding with a coarsely dense image, it is a coarse equivalence. 
\end{proof}

\begin{definition}
Let $(X, d_X)$ be a pseudometric space, and $x_0 \in X$ be a fixed basepoint. For the constant $C > 0$, the \emph{coarse Dirichlet domain} centered at $x_0$ is defined as:
\[ D_{x_0}(C) = \{x \in X \mid d_X(x, x_0) \le d_X(x, gx_0) + C \text{ for all } g \in G\}. \]
\end{definition}

\begin{proposition} \label{prop:dirichlet}
Suppose the action $G \curvearrowright X$ is coarsely cobounded on a pseudometric space $(X, d_X)$. Then there exists a constant $R > 0$ such that the coarse Dirichlet domain $D_{x_0}(R)$ is a bounded and coarse fundamental set with $G D_{x_0}(R) = X$.
\end{proposition}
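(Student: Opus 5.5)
The plan is to read off the constant $R$ directly from the coboundedness of the action, using the characterisation in Theorem~\ref{thm:cobounded_equivalences}. Fix the basepoint $x_0 \in X$. By $(1)\Rightarrow(2)$ of Theorem~\ref{thm:cobounded_equivalences}, the orbit map $\rho_{x_0}$ is coarsely surjective. Hence there is an entourage $E$ with $E[Gx_0]=X$, and since the coarse structure is metric, $E \subseteq E_{R_0}$ for some $R_0>0$. In the language of Remark~\ref{rem:r_net_connection}, the orbit $Gx_0$ is an $R_0$-net. By the standing convention, the action is by uniform coarse equivalences, so there is a master entourage $M \supseteq \bigcup_{g \in G} gE_{R_0}$. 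I would then choose $R_1 \ge R_0$ with $M \subseteq E_{R_1}$, and set $R \coloneqq R_1$. If the action is by isometries, one may simply take $R = R_0$.

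\textbf{Covering.} Let $x \in X$. Choose $g \in G$ with $d_X(x, gx_0) \le R_0$. Applying $g^{-1}$ to the pair $(x, gx_0) \in E_{R_0}$ gives $(g^{-1}x, x_0) \in M \subseteq E_{R_1}$, so $d_X(g^{-1}x, x_0) \le R$. Since distances are nonnegative, for every $h \in G$ we get
\[ d_X(g^{-1}x, x_0) \le R \le d_X(g^{-1}x, hx_0) + R. \]
Thus $g^{-1}x \in D_{x_0}(R)$, and so $x \in gD_{x_0}(R)$. This gives $GD_{x_0}(R)=X$. In particular $E[GD_{x_0}(R)] = X$ for $E = \Delta_X$ (or for any entourage containing the diagonal), so $D_{x_0}(R)$ is a coarse fundamental set.

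\textbf{Boundedness.} Let $x \in D_{x_0}(R)$. Since $Gx_0$ is an $R_0$-net, there is $h \in G$ with $d_X(x, hx_0) \le R_0$. Evaluating the Dirichlet inequality at this particular $h$ yields
\[ d_X(x, x_0) \le d_X(x, hx_0) + R \le R_0 + R. \]
Hence $D_{x_0}(R)$ is contained in the closed ball of radius $R_0+R$ about $x_0$, and is therefore coarsely bounded.

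The only step requiring care is the covering step when the action is not by isometries. There the naive choice $R=R_0$ can fail, because $d_X(g^{-1}x, x_0)$ need not equal $d_X(x, gx_0)$. This is exactly why $R$ is enlarged to the radius $R_1$ of the master entourage. Once that choice is made, the Dirichlet inequality holds trivially for every $h$, and both the covering and the boundedness reduce to the triangle inequality.
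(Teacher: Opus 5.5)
Your proof is correct and follows essentially the same route as the paper: covering comes from the observation that any point within $R$ of $x_0$ trivially satisfies the Dirichlet inequality (the paper phrases this as $B_R(x_0) \subseteq D_{x_0}(R)$), and boundedness comes from evaluating that inequality at an orbit point near the given point. You separate the net radius $R_0$ from the covering radius $R_1$ via the master entourage, which is more careful than the paper: the paper asserts that $GB_R(x_0)=X$ makes $Gx_0$ an $R$-net, and that step holds as written only for isometric actions.
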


\begin{proof}
Because the action is coarsely cobounded, there exists $R > 0$ such that $G B_{R}(x_0) = X$. This implies the orbit $Gx_0$ is an $R$-net in $X$. 

Observe that $B_R(x_0) \subseteq D_{x_0}(R)$; since $G B_R(x_0) = X$, it follows that $G D_{x_0}(R) = X$. Second, we prove $D_{x_0}(R)$ is coarsely bounded. Let $y \in D_{x_0}(R)$. By definition, $d_X(y, x_0) \le d_X(y, gx_0) + R$ for all $g \in G$. Since $Gx_0$ is an $R$-net, there exists a specific $h \in G$ such that $d_X(y, hx_0) \le R$. Substituting this $h$ into the inequality gives $d_X(y, x_0) \le R + R = 2R$. Thus, $D_{x_0}(R) \subseteq B_{2R}(x_0)$.
\end{proof}

To control the large-scale intersections of fundamental sets, we introduce a coarse analogue of the Koszul condition.

\begin{definition}
Let $G \curvearrowright X$ be an action on a topological coarse space $(X, \tau, \mathcal{E})$. A coarse fundamental set $F \subseteq X$ satisfies the \emph{coarse Koszul condition} if there exists an entourage $E \in \mathcal{E}$ such that for every coarsely bounded subset $B \subseteq X$, the transporter $(E[F] \mid B)_G$ is a coarsely bounded subset of $G$.
\end{definition}

\begin{theorem} \label{thm:dirichlet_koszul}
Let $G \curvearrowright X$ be a coarsely proper action on a pseudometric space $(X, d_X)$ by isometries. For any basepoint $x_0 \in X$ and any constant $C > 0$, the coarse Dirichlet domain $D_{x_0}(C)$ is a coarse fundamental set that satisfies the coarse Koszul condition.
\end{theorem}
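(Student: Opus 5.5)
The plan is to verify the two assertions of Theorem \ref{thm:dirichlet_koszul} directly from the definition of $D_{x_0}(C)$. The group acts by isometries, so both conditions reduce to elementary distance estimates. The one structural input is that coarse properness of the action makes the orbit map $\rho_{x_0}$ coarsely proper, via the implication $(1)\Rightarrow(2)$ of Theorem \ref{thm:equivalences}. Unlike Proposition \ref{prop:dirichlet}, we have no coboundedness here, so $D_{x_0}(C)$ need not be bounded. The argument must therefore use the strict positivity of $C$ for the covering property and properness for the Koszul property.

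\emph{Step 1: $D_{x_0}(C)$ is a coarse fundamental set.} I would in fact show $G D_{x_0}(C) = X$, so that the diagonal entourage suffices.
\begin{itemize}
\item Fix $x \in X$ and set $m \coloneqq \inf_{g \in G} d_X(x, gx_0) \ge 0$. This is finite because the orbit is nonempty.
\item Since $C > 0$, choose $g \in G$ with $d_X(x, gx_0) \le m + C$.
\item For every $h \in G$, using that $g^{-1}$ is an isometry,
\[ d_X(g^{-1}x, x_0) = d_X(x, gx_0) \le d_X(x, hx_0) + C = d_X(g^{-1}x, g^{-1}h x_0) + C. \]
As $h$ ranges over $G$, so does $g^{-1}h$. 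Hence $g^{-1}x \in D_{x_0}(C)$, that is, $x \in g D_{x_0}(C)$.
\end{itemize}

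\emph{Step 2: the coarse Koszul condition.} I would show that every entourage works; for definiteness take $E = E_s$ for some $s \ge 0$. Let $B \subseteq X$ be coarsely bounded.
\begin{itemize}
\item \emph{Reduce to $E = \Delta_X$.} Because $G$ acts by isometries, $g E_s[D] \cap B \neq \emptyset$ implies $gD \cap E_s[B] \neq \emptyset$, where $D \coloneqq D_{x_0}(C)$. The set $E_s[B]$ is again bounded. So it suffices to show that $(D \mid B')_G$ is bounded for every bounded $B'$.
\item \emph{Place $B'$ in a ball.} Since the pseudometric coarse structure is connected, $B' \subseteq B_r(x_0)$ for some $r$.
\item \emph{Estimate $d_X(gx_0, x_0)$.} Suppose $g \in (D \mid B')_G$, and pick $y \in D$ with $gy \in B'$. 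Apply the Dirichlet inequality with the group element $g^{-1}$, then transport by the isometry $g$. This gives $d_X(gy, gx_0) \le d_X(gy, x_0) + C \le r + C$. Therefore
\[ d_X(gx_0, x_0) \le d_X(gx_0, gy) + d_X(gy, x_0) \le 2r + C. \]
\item \emph{Conclude.} We obtain $(D \mid B')_G \subseteq \rho_{x_0}^{-1}\bigl(B_{2r+C}(x_0)\bigr)$. The right-hand side is bounded because $\rho_{x_0}$ is coarsely proper.
\end{itemize}

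\emph{Main obstacle.} The only delicate point is invoking Theorem \ref{thm:equivalences}, whose hypotheses include continuity and either (a) or (b). The implication $(1)\Rightarrow(2)$ there is purely set-theoretic: it uses only $\rho_x^{-1}(B) \subseteq (B' \mid B')_G$ with $B' = B \cup \{x\}$. I would therefore reprove that one line inline rather than cite the full theorem, so that no topological hypotheses are needed. Beyond that, the proof only requires care with the direction of the inequalities when translating by $g$ versus $g^{-1}$.
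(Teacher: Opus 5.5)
Your proof is correct and follows essentially the same route as the paper. Step 1 is the paper's infimum-plus-$C$ argument giving $GD_{x_0}(C)=X$. Step 2 rests on the same Dirichlet inequality with $h=g^{-1}$ and ends, as the paper does, in the transporter of a ball, which is bounded by coarse properness. The only differences are cosmetic: you move the thickening onto $B$ via the isometry and track $gx_0$ through $\rho_{x_0}^{-1}(B_{2r+C}(x_0))$, whereas the paper keeps the thickening and shows $(B_r(F)\mid B)_G \subseteq (B_R(x_0)\mid B_R(x_0))_G$ with $R=R_B+2r+C$ by tracking $x$ and $gx$.
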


\begin{proof}
Let $C > 0$ be an arbitrary constant and $x_0 \in X$. First, we establish that $F = D_{x_0}(C)$ is a coarse fundamental set. Let $x \in X$ be an arbitrary element, and we aim to show that $x \in GF$. To this end, consider its orbit $Gx$, and the set of distances $\{d_X(gx, x_0) \mid g \in G\} \subseteq [0,\infty)$. Since $\{d_X(gx, x_0) \mid g \in G\}$ is bounded below, the infimum $d_{\mathrm{inf}} = \inf_{g \in G} d_X(gx, x_0)$ exists. Since $C > 0$, there exists an element $g_0 \in G$ such that $d_X(g_0 x, x_0) < d_{\mathrm{inf}} + C$. By the definition of the infimum, $d_{\mathrm{inf}} \le d_X(gx, x_0)$ for all $g \in G$. In particular:
\[ d_X(g_0 x, x_0) < d_{\mathrm{inf}} + C \le d_X(g^{-1} g_0 x, x_0) + C \]
Because $G$ acts by isometries, $d_X(g^{-1} g_0 x, x_0) = d_X(g_0 x, gx_0)$. Therefore, $g_0x \in D_{x_0}(C)$, which implies $x = g_0^{-1}(g_0 x) \in g_0^{-1} D_{x_0}(C) \subseteq GF$. Thus, $GF = X$.

Now, we prove $F$ satisfies the coarse Koszul condition. Fix an arbitrary thickening radius $r > 0$, and let $B \subseteq X$ be an arbitrary coarsely bounded set. We must show that the transporter $(B_r(F) \mid B)_G$ (where $B_r(F)$ is the metric $r$-neighbourhood of $F$) is coarsely bounded in $G$. To this end, it suffices to find bounded subsets $K_1, K_2 \subseteq X$ such that $(B_r(F) \mid B)_G \subseteq (K_1 \mid K_2)_G$.

Choose $R_B > 0$ such that $B \subseteq B_{R_B}(x_0)$. We claim that $(B_r(F) \mid B)_G \subseteq (B_{R}(x_0) \mid B_{R}(x_0))_G$, where $R = R_B + 2r + C$.

Indeed, $g \in (B_{R}(x_0) \mid B_{R}(x_0))_G$ if and only if there exists $x \in B_{R}(x_0)$ such that $gx \in B_{R}(x_0)$. Equivalently, $d_X(x, x_0) \le R$ and $d_X(gx, x_0) \le R$.

Let $g \in (B_r(F) \mid B)_G$. By definition, $gx \in B \subseteq B_{R_B}(x_0)$ for some $x \in B_r(F) = B_r(D_{x_0}(C))$. Consequently, $d_X(gx, x_0) \le R_B < R$, and there exists $y \in D_{x_0}(C)$ such that $d_X(x, y) \le r$. By the definition of the coarse Dirichlet domain, $d_X(y, x_0) \le d_X(y, hx_0) + C$ for all $h \in G$. Setting $h = g^{-1}$ yields $d_X(y, x_0) \le d_X(y, g^{-1}x_0) + C$.

Since $G$ acts by isometries, we can bound $d_X(y, g^{-1}x_0)$ as follows:
\begin{align*} 
    d_X(y, g^{-1}x_0) &= d_X(gy, x_0) \\ 
                      &\le d_X(gy, gx) + d_X(gx, x_0) \\ 
                      &= d_X(y, x) + d_X(gx, x_0) \\ 
                      &\le r + R_B. 
\end{align*}
Substituting this into the previous inequality gives: 
\[ d_X(y, x_0) \le r + R_B + C. \]
Finally, applying the triangle inequality for $x$:
\begin{align*} 
    d_X(x, x_0) &\le d_X(x, y) + d_X(y, x_0) \\ 
                &\le r + (r + R_B + C) \\ 
                &= R_B + 2r + C = R. 
\end{align*}
Because both $d_X(x, x_0) \le R$ and $d_X(gx, x_0) \le R_B < R$, we conclude $g \in (B_R(x_0) \mid B_R(x_0))_G$, completing the proof.
\end{proof}
\section{Resolution of Rosendal's Open Problem B.7}

The concept of a coarse fundamental set established in the previous section allows us to resolve structural obstructions in the coarse geometry of topological groups. When a closed subgroup $H$ is cocompact in a Polish group $G$, there exists a compact set $K \subseteq G$ that acts as a coarse fundamental Dirichlet domain for the coset space $G/H$ (as $K$ is compact in a Hausdorff space, it satisfies the requirement of being a closed subset). 

We now apply this tiling framework to resolve an open problem posed by Rosendal \cite{Rosendal} regarding the intrinsic coarse geometry of Polish groups.

\subsection*{Intrinsic vs. Subspace Boundedness}

In his foundational text \cite{Rosendal}, Rosendal identifies several open problems concerning the coarse geometry of Polish groups. Specifically, Problem B.7 (as numbered in Appendix B of preprint \cite{Rosendal}) asks whether a cocompact closed subgroup $H$ of a Polish group $G$ is necessarily coarsely equivalent to $G$ under the coarse structure $\mathcal{E}_{\mathcal{P}(H)}$ induced by its native pseudometrics. Our geometric framework provides a resolution to Problem B.7.

Before presenting the formal resolution, we highlight a structural property of topological groups. If $H$ is a subgroup of a coarsely bounded Polish group $G$, $H$ is bounded with respect to the subspace coarse structure. That is, for every continuous left-invariant pseudometric $d \in \mathcal{P}(G)$, the restriction $d|_H$ is bounded. However, the open problems require $H$ to satisfy these properties natively, meaning it must be evaluated with respect to the coarse structure $\mathcal{E}_{\mathcal{P}(H)}$ induced by the family of all continuous left-invariant pseudometrics $\mathcal{P}(H)$ on $H$.

While the restriction mapping $\mathcal{P}(G)|_H \subseteq \mathcal{P}(H)$ always holds, the reverse is generally false. A closed subgroup $H$ can possess unbounded pseudometrics in $\mathcal{P}(H)$ that fundamentally cannot be extended to a continuous left-invariant pseudometric on the overgroup $G$. 

\begin{example}
Consider the infinite symmetric group $G = \operatorname{Sym}(\mathbb{Z})$ equipped with the topology of pointwise convergence. As established by Bergman \cite[Theorem 6]{Bergman}, $G$ possesses Bergman's property: for every generating symmetric set $S$ of $G$, we have that $G=S^k$ for some natural number $k$. To see why this forces the group to be coarsely bounded, let $d \in \mathcal{P}(G)$ be any continuous left-invariant pseudometric. The sequence of symmetric sets $S_m = \{g \in G \mid d(e, g) < m\}$ exhausts the group. Because groups with Bergman's property possess uncountable cofinality—meaning they cannot be expressed as a countable union of proper subgroups—the ascending chain of subgroups $\langle S_m \rangle$ must stabilise \cite[Theorem 5]{Bergman}. Thus, for a sufficiently large $m$, the set $S_m$ generates $G$. By Bergman's property, there exists an integer $k$ such that $G = S_m^k$. The triangle inequality implies that $d(e, g) < k m$ for all $g \in G$. Consequently, every continuous left-invariant pseudometric on $G$ is globally bounded, making $G$ coarsely bounded \cite{Rosendal}. 

Let $H \cong \mathbb{Z}$ be the closed discrete subgroup generated by the shift operator $n \mapsto n+1$ on $\mathbb{Z}$. The subgroup $H$ admits the standard unbounded Euclidean metric $d_H(n, m) = \lvert n - m \rvert$, meaning $d_H \in \mathcal{P}(H)$. Since $G$ is coarsely bounded, $d_H$ cannot be extended to a continuous left-invariant pseudometric on $G$. Consequently, with respect to the subspace coarse structure inherited from $G$, the subgroup $H$ is completely bounded, but with respect to its intrinsic coarse structure $\mathcal{E}_{\mathcal{P}(H)}$, $H$ is unbounded.
\end{example}
\newpage
\subsection*{Formal Resolution}

\begin{definition} \label{def:cocompact}
Let $G$ be a topological group and $H$ be a closed subgroup of $G$. The subgroup $H$ is said to be \emph{cocompact} in $G$ if there exists a compact subset $K \subseteq G$ such that $G = HK$. 
\end{definition}

\begin{lemma} \label{lem:intrinsic_extrinsic}
Let $G$ be a Polish group equipped with the coarse structure $\mathcal{E}_{\mathcal{P}(G)}$ induced by $\mathcal{P}(G)$ and let $H$ be a closed cocompact subgroup of $G$. The coarse structure $\mathcal{E}_{\mathcal{P}(H)}$ induced by $\mathcal{P}(H)$ coincides with the subspace coarse structure inherited from $G$. 
\end{lemma}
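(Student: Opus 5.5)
The plan is to prove the two inclusions between $\mathcal{E}_{\mathcal{P}(H)}$ and the subspace structure $\mathcal{E}_G\cap(H\times H)$ separately. Both structures are left-invariant on $H$ and have bases of the form $E_B\cap(H\times H)$, where $B$ ranges over the bounded sets of the relevant bornology. So it suffices to compare the two bornologies on $H$: the sets bounded in $G$ that lie inside $H$, versus the sets bounded for $\mathcal{P}(H)$.

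\textbf{Easy inclusion.} For every $d\in\mathcal{P}(G)$, the restriction $d|_H$ lies in $\mathcal{P}(H)$. Hence a set $E\subseteq H\times H$ with finite $d'$-displacement for every $d'\in\mathcal{P}(H)$ has finite $d$-displacement for every $d\in\mathcal{P}(G)$. This gives $\mathcal{E}_{\mathcal{P}(H)}\subseteq \mathcal{E}_G|_H$, and cocompactness plays no role here.

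\textbf{Hard inclusion: setup.} We must show that if $A\subseteq H$ is coarsely bounded in $G$, then $A$ is bounded in $H$. Since $H$ is closed in a Polish group, it is Polish, so I will use Lemma \ref{lem:algebraic_boundedness} on $H$. Fix an identity neighbourhood $V=U\cap H$ of $H$, with $U$ open in $G$. Write $G=HK$ with $K$ compact.

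\textbf{Cover by $G$-words.} Apply Lemma \ref{lem:algebraic_boundedness} in $G$ with a small symmetric open $W$ to be chosen below. This gives a finite $F'\subseteq G$ and $m$ with $A\subseteq (F'W)^m$.

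\textbf{Choice of $W$.} Cover $K$ by finitely many sets $c_iW$ with $c_i\in K$, so that $G=H\{c_i\}W$. First choose $W_0$ such that $c\,W W_0 W c^{-1}\subseteq U$ for all $c\in K$; this is possible by joint continuity of $(c,w)\mapsto cwc^{-1}$ and compactness of $K$, and it is also where cocompactness enters. Then shrink $W$ so that $f^{-1}W^2 f\subseteq W_0$ for the finitely many $f\in F'$. Since $F'$ depends on $W$, the order of quantifiers needs care. I would handle this by first fixing $W_1$ with the conjugation property, taking $F'$ from $W_1$, and then arguing with $W\subseteq W_1$ chosen afterwards, possibly after enlarging $F'$ to finitely many points of $(F'W_1)$-type. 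Getting this quantifier management right is the step I expect to be the main obstacle.

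\textbf{Rewriting step.} Take a word $a=f_1w_1\cdots f_mw_m\in A$. Inductively rewrite each partial product in the normal form $h\,c_i\,w$ with $h\in H$. Multiplying by the next letter $fw'$ gives $c_iwfw'=h'c_jw''$, where
\[ h'\in H\cap c_iWfW^2c_j^{-1}. \]

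\textbf{Finitely many representatives.} For each triple $(i,f,j)$, fix one representative $h_{ifj}$ in this set. Any other element $h''$ of the same set satisfies
\[ h_{ifj}^{-1}h''\in H\cap c_j W^2 f^{-1}W^2 f W^2 c_j^{-1}\subseteq H\cap U=V, \]
by the choice of $W$ (possibly after relabelling powers of $W$). So the finite set $F=\{h_{ifj}\}\cup\{\text{initial and terminal corrections}\}\subseteq H$ satisfies $A\subseteq (FV)^{m'}$ for some $m'$ depending only on $m$.

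\textbf{Conclusion.} Lemma \ref{lem:algebraic_boundedness} then shows that $A$ is bounded in $H$. Together with the easy inclusion, the two bornologies coincide, and therefore so do the coarse structures.
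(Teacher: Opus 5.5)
Your easy inclusion and the reduction to comparing the two bornologies on $H$ are fine. The gap is in the hard inclusion, at exactly the step you flag, and it is more than bookkeeping. The estimate $h_{ifj}^{-1}h''\in V$ requires $f^{-1}W^2f$ to be small for every $f\in F'$, but $F'$ is produced by Lemma \ref{lem:algebraic_boundedness} only after $W$ has been fixed. Your workaround does not break this circle. If you fix $W_1$ first and take $F'$ from it, the letters of your words lie in $W_1$, so what must be controlled is $c_jW_1^2f^{-1}W_1^2fW_1^2c_j^{-1}$, and choosing a smaller $W$ afterwards changes nothing. Rewriting $W_1$-letters as $W$-letters would need $W_1$ to be covered by finitely many translates of $W$. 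That fails in general, e.g.\ when $W$ is an open subgroup of infinite index in $W_1$, as for point stabilisers in $\operatorname{Sym}(\mathbb{Z})$. So the sets $H\cap c_iWfW^2c_j^{-1}$ need not lie in a single $V$-translate, and the ``finitely many representatives'' step fails.

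The missing idea is to use compactness of $K$ and closedness of $H$ instead of conjugation, and in particular not to discretise $K$.
\begin{enumerate}
\item Assume $1_G\in K$ and keep normal forms $x=pk$ with $p\in H$ and $k\in K$.
\item Given $V$, choose a finite $L\subseteq H$ with $H\cap KK^{-1}\subseteq LV$. Then, by the paper's tube-lemma argument on the open set $LV\cup(G\setminus H)$, choose an identity neighbourhood $W$ with $H\cap KWK^{-1}\subseteq LV$. This $W$ depends only on $K$ and $V$.
\item Only now take $F'$ and $m$ with $A\subseteq(F'W)^m$.
\item A letter $w\in W$ acts by $kw=h''k''$ with $h''\in H\cap KWK^{-1}\subseteq LV$.
\item A letter $f\in F'$ acts by $kf=h'k'$ with $h'\in H\cap KfK^{-1}$. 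This is a compact subset of $H$ because $H$ is closed, so cover it by $L_fV$ with $L_f\subseteq H$ finite. The cover is chosen after $F'$, so there is no circularity: the $f$-jumps need not be small, only confined to finitely many $V$-translates.
\item The final $K$-factor of $a\in A$ lies in the compact set $H\cap K\subseteq L_0V$.
\end{enumerate}
Hence $A\subseteq(L_*V)^{2m+1}$ with $L_*=L\cup L_0\cup\bigcup_{f\in F'}L_f$, and Lemma \ref{lem:algebraic_boundedness} applied in the Polish group $H$ finishes the proof.

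The paper's written proof uses the same $W$ and the same normal form (via $P_K(B)=BK^{-1}\cap H$), but it asserts $A\subseteq FW^m$. Lemma \ref{lem:algebraic_boundedness} does not give this. Take $H=G=\operatorname{Sym}(\mathbb{Z})$, $A=G$ and $W$ a point stabiliser: then $W^m=W$, and finitely many cosets of $W$ cannot cover $G$. You were right to work with $(F'W)^m$. The compactness of $H\cap KfK^{-1}$ is what handles the interleaved $F'$-letters, in the paper's induction as well as yours.
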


\begin{proof}
Since $H$ is a cocompact subgroup of $G$, we can choose a compact subset $K \subseteq G$ containing $1_G$ such that $G = HK$.

Assume $A$ is bounded with respect to $\mathcal{E}_{\mathcal{P}(H)}$. Let $U$ be an arbitrary open neighbourhood of $1_G$ in $G$. The intersection $V = U \cap H$ constitutes an open neighbourhood of $1_H$ in the subspace topology. Because $A$ is bounded in $\mathcal{E}_{\mathcal{P}(H)}$ and $H$ is Polish, by Lemma \ref{lem:algebraic_boundedness}, there exists a finite set $F_H \subseteq H$ and an integer $m \ge 1$ such that $A \subseteq (F_H V)^m$. Since $V \subseteq U$ and $F_H \subseteq G$, we directly obtain $A \subseteq (F_H U)^m$. Therefore, $A$ is bounded with respect to the subspace coarse structure inherited from $G$.

Conversely, assume $A$ is bounded with respect to the subspace coarse structure. To prove that $A$ is bounded in $\mathcal{E}_{\mathcal{P}(H)}$, it suffices to show that for any open symmetric neighbourhood $V$ of $1_H$, there exist finite subsets $F, L \subseteq H$ and an integer $m \ge 1$ such that $A \subseteq F(LV)^m$. 
Indeed, for any continuous left-invariant pseudometric $d \in \mathcal{P}(H)$, the open unit ball $V_d = \{h \in H \mid d(1_H, h) < 1\}$ forms an open symmetric neighbourhood of $1_H$. By assumption, there exist finite sets $F, L \subseteq H$ and an integer $m \ge 1$ such that $A \subseteq F(LV_d)^m$. Because finite sets are $d$-bounded and the product of bounded sets remains bounded under a left-invariant pseudometric, the set $F(LV_d)^m$ is bounded with respect to $d$. Consequently, $A$ is bounded with respect to $d$. Since $d \in \mathcal{P}(H)$ was arbitrary, $A$ is bounded in $\mathcal{E}_{\mathcal{P}(H)}$.

Let $V$ be an arbitrary open symmetric neighbourhood of $1_H$. Since $K$ is compact, $K K^{-1} \cap H$ is a compact subset of $H$, meaning there exists a finite set $L \subseteq H$ such that $K K^{-1} \cap H \subseteq L V$.

We claim there exists an open symmetric neighbourhood $W$ of $1_G$ such that $(K W K^{-1}) \cap H \subseteq L V$. 
Because $H$ is a closed subspace of $G$ and $L V$ is open in $H$, the union $O = L V \cup (G \setminus H)$ constitutes an open set in $G$. By our previous deduction, $K K^{-1} \subseteq O$. Consider the continuous multiplication map $\Phi \colon (K \times K) \times G \to G$ defined by $\Phi((k_1, k_2), g) = k_1 g k_2^{-1}$. Since $\Phi((K \times K) \times \{1_G\}) = K K^{-1} \subseteq O$ and the topological product $K \times K$ is compact, the Tube Lemma guarantees the existence of an open symmetric neighbourhood $W$ of $1_G$ such that $\Phi((K \times K) \times W) = K W K^{-1} \subseteq O$. Intersecting both sides with $H$ yields $(K W K^{-1}) \cap H \subseteq O \cap H = L V$, proving the claim.

Because $A$ is bounded in the subspace coarse structure, there exists a finite set $F \subseteq G$ and an integer $m \ge 1$ such that $A \subseteq F W^m$. Since $G=HK$, we can choose a finite subset $F_H \subseteq H$ such that $F \subseteq F_H K$. Thus, $A \subseteq F_H K W^m$.

To project this bounded set back into $H$, we define the fundamental domain projection map $P_K \colon 2^G \to 2^H$ by:
\[ P_K(B) = B K^{-1} \cap H \]
This projection satisfies the following structural properties:
\begin{enumerate}
    \item[(P1)] For any subset $B \subseteq G$, $B \subseteq P_K(B) K$.
    \item[(P2)] For any subset $S \subseteq H$, $P_K(S K W) = S (K W K^{-1} \cap H)$.
    \item[(P3)] If $B_1, B_2 \subseteq G$ such that $B_1 \subseteq B_2$, then $P_K(B_1) \subseteq P_K(B_2)$.
    \item[(P4)] If $B \subseteq H$, then $B \subseteq P_K(B)$ (since $1_G \in K$).
\end{enumerate}

Since $A \subseteq F_H K W^m$, applying the monotonicity property (P3) yields $P_K(A) \subseteq P_K(F_H K W^m)$. Because $A \subseteq H$, property (P4) ensures $A \subseteq P_K(A)$, combining to give $A \subseteq P_K(F_H K W^m)$. We claim that $P_K(F_H K W^m) \subseteq F_H (L V)^m$.

We proceed inductively. For the base case $m=1$, since $K W K^{-1} \cap H \subseteq L V$, multiplying by $F_H$ from the left and applying property (P2) yields:
\[ P_K(F_H K W) = F_H (K W K^{-1} \cap H) \subseteq F_H (L V). \]
Now, assume the induction hypothesis holds for $m-1$:
\[ P_K(F_H K W^{m-1}) \subseteq F_H (L V)^{m-1} \]
Multiplying this inclusion from the right by $K$ and applying property (P1) to the left-hand side yields:
\[ F_H K W^{m-1} \subseteq P_K(F_H K W^{m-1}) K \subseteq F_H (L V)^{m-1} K \]
Multiplying by $W$ from the right and applying the monotonic projection property (P3) implies:
\[ P_K(F_H K W^m) \subseteq P_K(F_H (L V)^{m-1} K W) \]
Since $F_H (L V)^{m-1} \subseteq H$, evaluating the right-hand side via property (P2) yields:
\[ P_K(F_H (L V)^{m-1} K W) = F_H (L V)^{m-1} (K W K^{-1} \cap H) \]
Finally, applying our neighbourhood containment $K W K^{-1} \cap H \subseteq L V$ results in:
\[ F_H (L V)^{m-1} (K W K^{-1} \cap H) \subseteq F_H (L V)^{m-1} (L V) = F_H (L V)^m \]
Combining these steps completes the induction, yielding:
\[ A \subseteq P_K(F_H K W^m) \subseteq F_H (L V)^m \]
as desired.
\end{proof}

The coincidence of these bornologies permits the construction of a valid coarse inverse, yielding a resolution to Problem B.7.

\begin{theorem}[Resolution of Problem B.7] \label{thm:cocompact_subgroup}
Let $G$ be a Polish group. Suppose $H$ is a closed subgroup of $G$ that is cocompact. Then $H$, equipped with the coarse structure $\mathcal{E}_{\mathcal{P}(H)}$ induced by its pseudometrics, is coarsely equivalent to $G$. In particular, the inclusion map $\iota \colon H \hookrightarrow G$ is a coarse equivalence.
\end{theorem}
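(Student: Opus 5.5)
The plan is to factor the inclusion $\iota \colon (H, \mathcal{E}_{\mathcal{P}(H)}) \hookrightarrow (G, \mathcal{E}_{\mathcal{P}(G)})$ through the subspace structure. Let $\mathcal{E}_G|_H$ denote the subspace coarse structure that $H$ inherits from $G$. We will show three things: the identity $(H, \mathcal{E}_{\mathcal{P}(H)}) \to (H, \mathcal{E}_G|_H)$ is an isomorphism of coarse spaces; the inclusion $(H, \mathcal{E}_G|_H) \hookrightarrow G$ is a coarse equivalence; and hence the composite $\iota$ is a coarse equivalence.

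\textbf{Step 1: the two structures on $H$ agree.} Lemma \ref{lem:intrinsic_extrinsic} shows that the two structures have the same bounded sets. We upgrade this to equality of entourages, which is possible because both structures are left-invariant and determined by their bornologies.

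By Definition \ref{def:rosendal_pseudometric}, $\mathcal{E}_{\mathcal{P}(H)}$ has as a basis the sets $E_A = \{(x,y) \in H\times H \mid x^{-1}y \in A\}$, where $A$ ranges over $\mathcal{P}(H)$-bounded sets. Similarly, $\mathcal{E}_G|_H$ has as a basis the sets $E_B \cap (H\times H)$, where $B$ ranges over $\mathcal{P}(G)$-bounded subsets of $G$.

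For $x,y \in H$ we have $x^{-1}y \in H$. Hence $E_B \cap (H \times H) = E_{B\cap H}$, computed inside $H$. Now $B \cap H$ is bounded in the subspace structure, so by Lemma \ref{lem:intrinsic_extrinsic} it is $\mathcal{P}(H)$-bounded. Conversely, a $\mathcal{P}(H)$-bounded $A$ is $G$-bounded by the same lemma, and $E_A = E_A^G \cap (H\times H)$. Therefore the two bases generate the same structure. The one point needing care is that Lemma \ref{lem:intrinsic_extrinsic} is stated for subsets $A \subseteq H$, which is exactly the case arising here.

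\textbf{Step 2: the inclusion $(H,\mathcal{E}_G|_H)\hookrightarrow G$ is a coarse equivalence.} We use Proposition \ref{prop:bounded_index}, which requires $G = HB$ for some bounded $B$. Cocompactness (Definition \ref{def:cocompact}) gives a compact $K$ with $G = HK$. By Lemma \ref{lem:Compar-Struc}, $\mathcal{E}_{\mathrm{com}} \subseteq \mathcal{E}_{\mathcal{P}}$, so $K$ is coarsely bounded: every continuous pseudometric is bounded on a compact set. Proposition \ref{prop:bounded_index} then applies directly. Equivalently, by Lemma \ref{subsp-cor-equi}, $H$ is $E_K$-coarsely dense since $E_K[H] = HK = G$.

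\textbf{Step 3: composition.} Composing the identity isomorphism from Step 1 with the equivalence from Step 2 shows that $\iota$ is a coarse equivalence. A coarse inverse is a map $p \colon G \to H$ with $p(g) \in gK^{-1}\cap H$. Such $p$ is bornologous into $\mathcal{E}_{\mathcal{P}(H)}$ by Step 1.

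\textbf{Main obstacle.} The substantive work is Lemma \ref{lem:intrinsic_extrinsic}, which is already established. Once that is available, the remaining step most likely to hide a gap is the passage in Step 1 from equality of bornologies to equality of entourage structures. That step hinges on left-invariance, namely that the difference $x^{-1}y$ stays in $H$, so we will verify it explicitly rather than cite it.
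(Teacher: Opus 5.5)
Your proposal is correct and follows the paper's proof: cocompactness gives $G=HK$ with $K$ compact and hence coarsely bounded, Proposition~\ref{prop:bounded_index} makes the inclusion a coarse equivalence for the subspace structure, and Lemma~\ref{lem:intrinsic_extrinsic} identifies that subspace structure with $\mathcal{E}_{\mathcal{P}(H)}$. Your Step 1, which passes from equal bornologies to equal coarse structures via $E_B\cap(H\times H)=E_{B\cap H}$, fills in a step the paper leaves implicit: the paper asserts that the coarse structures coincide, but the proof of Lemma~\ref{lem:intrinsic_extrinsic} only compares bounded sets.
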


\begin{proof}
Because $H$ is cocompact in $G$, there exists a compact subset $K \subseteq G$ such that $G = HK$. In the left-invariant coarse structure of any topological group, every compact set is bounded, meaning $K \in \mathcal{B}$. 

By Proposition \ref{prop:bounded_index}, the factorisation $G = HK$ with bounded $K$ guarantees that $H$, when equipped with the subspace coarse structure inherited from $G$, is coarsely equivalent to $G$ via the inclusion map $\iota \colon H \hookrightarrow G$.

Furthermore, Lemma \ref{lem:intrinsic_extrinsic} establishes that because $H$ is a closed and cocompact subgroup, its coarse structure $\mathcal{E}_{\mathcal{P}(H)}$ coincides exactly with this subspace coarse structure. Therefore, $H$ equipped with its intrinsic bornology is coarsely equivalent to $G$, resolving the problem.
\end{proof}
\section{Example: The Urysohn Universal Space}

We illustrate the necessity of the coarse generalisation by analysing the isometry group of the Urysohn universal metric space, denoted $G = \operatorname{Isom}(\mathbb{U})$. Equipped with the topology of pointwise convergence, $G$ is a non-discrete, non-locally compact Polish group where the classical framework of proper discontinuity does not apply.

Following Rosendal \cite{Rosendal}, the continuous left-invariant bornology on $\operatorname{Isom}(\mathbb{U})$ is generated by the displacement of bounded sets. A subset $A \subseteq G$ is coarsely bounded if and only if for every metrically bounded set $B \subseteq \mathbb{U}$, the uniform displacement $\sup_{g \in A} \sup_{x \in B} d(x, gx)$ is finite. As an application of our framework, we recover the large-scale geometry of this action, providing an alternative to the treatment in \cite[Proposition 3.8]{Rosendal}.

\begin{proposition}[cf. Rosendal {\cite[Proposition 3.8]{Rosendal}}]
The canonical action of $G = \operatorname{Isom}(\mathbb{U})$ on $\mathbb{U}$ is coarsely proper and coarsely cobounded. 
\end{proposition}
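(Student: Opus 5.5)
Coboundedness is the easy half: $\mathbb{U}$ is ultrahomogeneous, so $G = \operatorname{Isom}(\mathbb{U})$ acts transitively on $\mathbb{U}$. Taking $B=\{x_0\}$, which is coarsely bounded in $\mathbb{U}$, we get $GB = Gx_0 = \mathbb{U}$. Hence the action is coarsely cobounded, directly from the definition and without appeal to Theorem \ref{thm:cobounded_equivalences}.

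For properness I would use the characterisation of Theorem \ref{thm:equivalences}. Hypothesis (b) holds because $\mathbb{U}$ is a metric space, and (a) holds as well since $G$ is Polish. The action is continuous for the pointwise convergence topology and acts by isometries, hence by uniform coarse equivalences. So it suffices to prove condition (2): for each $x_0\in\mathbb{U}$ the orbit map $\rho_{x_0}$ is coarsely proper.

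Fix a metrically bounded $B\subseteq \mathbb{U}$ with $B\subseteq B_R(x_0)$, and set $A=\rho_{x_0}^{-1}(B)=\{g\mid d(gx_0,x_0)\le R\}$. The goal is to show that $A$ is coarsely bounded in $G$. I would use the displacement criterion quoted above from Rosendal: for every metrically bounded $B'\subseteq\mathbb{U}$, say $B'\subseteq B_{S}(x_0)$, and every $g\in A$, $x\in B'$,
\[ d(x,gx)\le d(x,x_0)+d(x_0,gx_0)+d(gx_0,gx)\le 2S+R, \]
using that $g$ is an isometry. The uniform displacement is therefore finite, and $A$ is coarsely bounded. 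Equivalently, one can check this directly via Lemma \ref{lem:pseudometric_boundedness}.

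The main obstacle is that the displacement characterisation of $\mathcal{B}_{\mathcal{P}}$ for $\operatorname{Isom}(\mathbb{U})$ is a nontrivial input, resting on Rosendal's analysis of $\mathbb{U}$ via Katětov extensions, and I would cite it rather than reprove it. If a self-contained route is wanted, I would instead try to show that the stabiliser $G_{x_0}$ is coarsely bounded. One could then write $A\subseteq G_{x_0}\,T\,G_{x_0}$, where $T$ is a set of isometries moving $x_0$ distance at most $R$ and chosen to have bounded displacement using the extension property. This second route is exactly where the real difficulty lies, so I would commit to the cited criterion.
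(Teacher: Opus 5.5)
Your proposal is correct and follows essentially the paper's route. Coboundedness comes from transitivity via ultrahomogeneity, and properness comes from a triangle-inequality displacement bound combined with Rosendal's displacement criterion for $\operatorname{Isom}(\mathbb{U})$, which both you and the paper cite rather than prove. The paper bounds the transporter $(B \mid B)_G$ directly, which avoids your detour through Theorem~\ref{thm:equivalences}; that detour is worth avoiding, since the paper's proof of that theorem reaches $(2)\Rightarrow(1)$ only via condition (4) under local boundedness. On the other hand, your check over every bounded $B'$ is exactly what the criterion demands, whereas the paper checks only $B$ itself. Your aside that boundedness could be verified ``directly'' via Lemma~\ref{lem:pseudometric_boundedness} is not a real shortcut, since that is precisely the nontrivial content the criterion packages.
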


\begin{proof}
Let $B \subseteq \mathbb{U}$ be a metrically bounded set. The transporter is $(B \mid B)_G = \{g \in G \mid gB \cap B \neq \emptyset\}$. For any $g \in (B \mid B)_G$, there exists a point $z \in B$ such that $gz \in B$. 

For any $x \in B$, we apply the triangle inequality:
\[ d(x, gx) \le d(x, z) + d(z, gz) + d(gz, gx). \]
Since $x, z \in B$, $d(x, z) \le \operatorname{diam}(B)$. Since $z, gz \in B$, $d(z, gz) \le \operatorname{diam}(B)$. Because $g$ acts as an isometry, $d(gz, gx) = d(z, x) \le \operatorname{diam}(B)$. 

Summing these bounds yields $d(x, gx) \le 3\operatorname{diam}(B)$ for all $x \in B$ and all $g \in (B \mid B)_G$. Therefore:
\[ \sup_{g \in (B \mid B)_G} \sup_{x \in B} d(x, gx) \le 3\operatorname{diam}(B) < \infty. \]
This proves that the transporter $(B \mid B)_G$ is a coarsely bounded subset of $\operatorname{Isom}(\mathbb{U})$ under its left-invariant bornology. Thus, the action is coarsely proper. Furthermore, because $\mathbb{U}$ is ultrahomogeneous, $\operatorname{Isom}(\mathbb{U})$ acts transitively, meaning the action is coarsely cobounded.
\end{proof}

\end{document}